\documentclass[11pt]{amsart}   
\usepackage[english]{babel}
\usepackage{mathtools}
\usepackage{amssymb,amscd, graphics,color,latexsym,cancel}
\usepackage{comment}
\usepackage[linktoc=all, pagebackref, hyperindex]{hyperref}%
\hypersetup{colorlinks,  citecolor=blue,  filecolor=blue,  linkcolor=blue,  urlcolor=black}

\usepackage{tikz-cd} 
\usepackage{extpfeil}

\usepackage{tikz}
\usetikzlibrary{matrix,calc}
\usepackage{mathrsfs}
\usepackage[all]{xy}
\usepackage{comment}
\usepackage{amsfonts}
\usepackage[normalem]{ulem}

\usepackage{euscript}
\usepackage{amsmath}
\usepackage{ifpdf}
\usepackage{cleveref}
\LARGE\textwidth=6in
\newtheorem{Theorem}{Theorem}[section]
\newtheorem{Lemma}[Theorem]{Lemma}
\newtheorem{Corollary}[Theorem]{Corollary}
\newtheorem{Proposition}[Theorem]{Proposition}
\newtheorem{mthm}{Main Theorem}

\theoremstyle{definition}
\newtheorem{Remark}[Theorem]{Remark}
\newtheorem{Example}[Theorem]{Example}

\newtheorem{Question}[Theorem]{Question}
\def\sqr#1#2{{\vcenter{\hrule height.#2pt
			\hbox{\vrule width.#2pt height#1pt \kern#1pt
				\vrule width.#2pt}
			\hrule height.#2pt}}}
\def\phi{\varphi}

\def\VaVa{{\mathcal V}\kern-5pt {\mathcal V}}
\def\gr#1#2{{\rm gr}\, _{#1}(#2)}
\def\gr{{\rm gr}\,}
\def\hht{{\rm ht}\,}
\def\depth{{\rm depth}\,}

\def\Min{{\rm Min}\,}
\def\codim{{\rm codim}\,}

\def\ker{{\rm ker}\,}
\def\grade{{\rm grade}\,}
\def\rk{\rm rank}

\def\syz{\mbox{\rm Syz}}

\def\Ext#1#2#3#4{{\rm Ext}\,^{#1}_{#2}({#3},{#4})}

\def\supp#1{{\rm Supp}\, (#1)}

\def\ini{\mbox{\rm in}}

\def\cl#1{{\mathcal #1}}

\def\phi{\varphi}

\def\hht{{\rm ht}\,}
\def\grade{{\rm grade}\,}

\def\ZZ{{\bf Z}}

\def\fm{{\mathfrak m}}

\def\fm{{\mathfrak m}}

\def\cl#1{{\cal #1}}
\def\rk{\rm rank}

\newcommand{\excise}[1]{}

\def\NZQ{\mathbb}               

\def\ZZ{{\NZQ Z}}

\def\CC{{\NZQ C}}

\def\PP{{\NZQ P}}

\def\A{{\mathcal A}}

\def\G{{\mathcal G}}

\def\B{{\mathcal B}}

\def\opn#1#2{\def#1{\operatorname{#2}}} 
\opn\chara{char} \opn\length{\ell} \opn\pd{pd} \opn\rk{rk}
\opn\projdim{proj\,dim} \opn\injdim{inj\,dim} \opn\rank{rank}
\opn\depth{depth} \opn\grade{grade} \opn\height{height}
\opn\embdim{emb\,dim} \opn\codim{codim}

\opn\Tr{Tr} \opn\bigrank{big\,rank}
\opn\superheight{superheight}\opn\lcm{lcm}
\opn\trdeg{tr\,deg}
	\opn\reg{reg} \opn\lreg{lreg} \opn\ini{in} \opn\lpd{lpd}
	\opn\size{size} \opn\sdepth{sdepth}
	\opn\link{link}\opn\fdepth{fdepth}\opn\lex{lex}
	\opn\tr{tr}
	\opn\type{type}
	\opn\div{div} \opn\Div{Div} \opn\cl{cl} \opn\Cl{Cl}
	\opn\Spec{Spec} \opn\Supp{Supp} \opn\supp{supp} \opn\Sing{Sing}
	\opn\Ass{Ass} \opn\Min{Min}\opn\Mon{Mon}
	\opn\Ho{H}
	\opn\Ann{Ann} \opn\Rad{Rad} \opn\Soc{Soc}
	\opn\ann{ann}
	\opn\Im{Im} \opn\Ker{Ker} \opn\Coker{Coker} \opn\Am{Am}
	\opn\Hom{Hom} \opn\Tor{Tor} \opn\Ext{Ext} \opn\End{End}
	\opn\Aut{Aut} \opn\id{id}
	
	\opn\nat{nat}
	\opn\pff{pf}
	\opn\Pf{Pf} \opn\GL{GL} \opn\SL{SL} \opn\mod{mod} \opn\ord{ord}
	\opn\Gin{Gin} \opn\Hilb{Hilb}\opn\sort{sort}
	\opn\HS{HS}  \opn\PF{PF}\opn\Ap{Ap}\opn\HF{HF}\opn\indeg{indeg}
	\opn\aff{aff} \opn
	\con{conv} \opn\relint{relint} \opn\st{st}
	\opn\lk{lk} \opn\cn{cn} \opn\core{core} \opn\vol{vol}  \opn\inp{inp} \opn\nilpot{nilpot}
	\opn\link{link} \opn\star{star}\opn\lex{lex}\opn\set{set}
	\opn\width{wd}
	\opn\Fr{F}
	\opn\QF{QF}
	\opn\G{G}
	\opn\type{type}\opn\res{res}
	\opn\log{Log}   \opn\Der{Der}  \opn\Bour{Bour} 
    \opn\im{im}   \opn\proj{Proj}  \opn\BSpec{BSpec}
    \opn\ESpec{ESpec}
	\opn\gr{gr}
	
	\def\pot#1#2{#1[\kern-0.28ex[#2]\kern-0.28ex]}

	\opn\dirlim{\underrightarrow{\lim}}
	\opn\inivlim{\underleftarrow{\lim}}
\begin{document}
		
		\title[ Bourbaki Degree of Line Arrangements]{The Bourbaki Degree of Line Arrangements} 
		
		\author{Abbas Nasrollah Nejad}
		\address{Department of Mathematics, Institute for Advanced Studies in Basic Sciences (IASBS), Zanjan 45137-66731, Iran \newline \indent
        Instituto de Ciência e Tecnologia, Universidade Federal de S\~ao Paulo (UNIFESP),
        S\~ao Jos\'e dos Campos, SP, Brazil
        }
		\email{annejad@unifesp.br}

        \author{Samira Nouri}
		\address{Department of Mathematics, Institute for Advanced Studies in Basic Sciences (IASBS), Zanjan 45137-66731, Iran }
		\email{nouri@iasbs.ac.ir}
		
\subjclass[2020]{Primary 52C35; Secondary 13D02, 14H50, 32S22}

\keywords{line arrangements, Bourbaki degree, gradient ideal,  characteristic polynomial, Tjurina number, free divisors,  Terao's conjecture}
\begin{abstract}
We study the Bourbaki degree of line arrangements in $\PP^2$ and its
interaction with the intersection lattice and the syzygies of the
gradient ideal. We show that the Bourbaki degree is obtained by
evaluating the reduced characteristic polynomial at the initial degree
of the first syzygy module. This leads to a sharp upper bound and to refinements involving the two
largest intersection multiplicities, with consequences for the global
Tjurina number, the freeness defect, and Terao's conjecture, including
an improvement of Dimca's numerical criterion when the smaller
exponent is at least five. We also establish addition--deletion formulas
and show that the defect from the sharp bound is monotone under line
addition. Finally, we prove that the Bourbaki degree is determined by
the intersection lattice for arrangements with at most eight lines, while examples with isomorphic intersection lattices and different
Bourbaki degrees exist for every number of lines at least nine.
\end{abstract}
		\maketitle
\section*{Introduction}
\label{sec:introduction}

A recurring theme in the study of line arrangements is the interaction
between the geometry of the intersection points, the combinatorics of
the intersection lattice, and the homological behavior of the gradient
ideal. Line arrangements therefore form a natural meeting point of
combinatorics, commutative algebra, singularity theory, and the theory
of logarithmic derivations. The foundations of this viewpoint go back
to Saito's work on logarithmic derivations~\cite{SaitoLogarithmic}; for the arrangement-theoretic background we
refer to~\cite{Orlik-Terao}, while more recent developments related to
numerical invariants, Tjurina numbers, and derivation modules can be found in~\cite{Dimca-minimal-Tjurina, Dimca-local-derivations, Dimca-Ibadula-Macinic}.

Let $k$ be an algebraically closed field of characteristic zero and set
$R=k[x,y,z]$. Let $\A=\{L_1,\ldots,L_{n+1}\}$ be an arrangement of distinct lines in $\PP_k^2$, with defining equation
$f_\A=\ell_1\cdots\ell_{n+1}\in R$, and let $J_\A=J_{f_\A}$ be its
gradient ideal. We explicitly assume $k=\CC$ whenever required by the
results used from the literature. Two basic numerical invariants that appear throughout
this theory are the global Tjurina number $\tau(\A)$ and the initial
degree $e_\A$ of the first syzygy module $\syz(J_\A)$.

The homological point of view relates the singularities of the
arrangement to the structure of the first syzygy module of its
gradient ideal. The simplest case occurs when $\syz(J_\A)$ is free;
these are precisely the free arrangements. In this case $e_\A$ is the
smaller exponent. More generally, the degrees and the number of
generators of $\syz(J_\A)$ play an important role in the theory of
nearly free and plus-one generated arrangements. The notion of a
plus-one generated arrangement was introduced by Abe~\cite{Abe-plus-one}; related developments can be found in
\cite{Abe-Dimca-Pokora-hierarchy,
Bromboszcz-plus-one,Dimca-Ibadula-Macinic, Dimca-Kuhne-Pokora}.

The invariants $\tau(\A)$ and $e_\A$ reflect rather different aspects
of the arrangement. The Tjurina number is determined by the
intersection multiplicities, and hence by the intersection lattice,
whereas $e_\A$ need not be combinatorial. This phenomenon already
appears in Ziegler's examples~\cite{Ziegler1989} and has recently been
studied from several points of view; see
\cite{Dimca-Pokora-Jacobian-Ziegler,Dimca-Pokora-nine, Dimca-Pokora-Ziegler, DiPasquale-Sidman-Traves, Kuhne-Luber-Pokora}. The interaction between the lattice-theoretic
data and the syzygies of $J_\A$ is also closely related to Terao's
conjecture, which asks whether freeness is determined by the
intersection lattice. We refer to~\cite{Orlik-Terao} for the classical
theory and to \cite{Barakat-Kuhne} for recent computational progress.

The present paper approaches this circle of problems through the Bourbaki degree introduced by the first author, M.~Jardim, and A.~Simis in \cite{JNSBourbaki}. Further developments of this invariant in broader algebraic settings, including syzygy modules of matrices and arbitrary three-generated homogeneous ideals, can be found in~\cite{JMN-matrices,JMN-R-three-generated}. If $X=V(f)\subseteq\PP^2$ is a reduced
plane curve and $e=\indeg(\syz(J_f))$, a minimal generator
$\nu\in\syz(J_f)_e$ gives a Bourbaki sequence
\[
0\longrightarrow R(-e)\longrightarrow\syz(J_f)
\longrightarrow I_\nu(e-d)\longrightarrow0,
\]
where $\deg(f)=d+1$. The degree of the corresponding Bourbaki ideal is
independent of the choice of $\nu$ and defines the numerical invariant
$\Bour(X)$. One of the basic formulas
in \cite{JNSBourbaki} is
\[
\Bour(X)=d^2+e(e-d)-\tau(X).
\]
Thus the Bourbaki degree brings together the initial degree of the
first syzygy module and the global Tjurina number in a single
numerical invariant.

For line arrangements, this point of view becomes particularly
effective. The special form of their singularities allows the
Bourbaki degree to be expressed in terms of the reduced characteristic
polynomial, and this leads to sharp bounds involving the initial degree
and the intersection multiplicities. It also reveals a useful behavior
under addition and deletion and gives a new way to study questions of
combinatorial invariance.

Two ingredients are central to the proof of our first main result.
Proposition~\ref{prop:syzygies-associated-with-points} shows that the
syzygy associated with an intersection point $p$ of multiplicity
$m_p$ lies in $\syz(J_\A)_{n+1-m_p}$, has coordinates with no
nonconstant common factor, and is not proportional to the syzygy
associated with any other intersection point. On the Bourbaki side,
Lemma~\ref{lem:regularity-bourbaki-ideal} gives
\[
\reg(I_\nu)\leq e_\A-1.
\]
Together, these facts yield the sharp bound for the Bourbaki degree
and its refinements in terms of the intersection multiplicities.

Assume from now on that $\A$ is essential and set $e=e_\A$. Let
\[
\overline{\chi}_\A(t)=t^2-nt+b_2(\A)-n
\]
be its reduced characteristic polynomial. For a nonfree arrangement,
we also consider its \emph{type}
\[
t(\A):=\indeg(I_\nu),
\]
introduced in
\cite[Definition~1.2 and formula~(1.8)]{Abe-Dimca-Pokora-hierarchy},
where $I_\nu$ is the Bourbaki ideal associated with a minimal syzygy
$\nu\in\syz(J_\A)_e$. This number is independent of the choice of
$\nu$. Our first main result relates the Bourbaki degree to the reduced
characteristic polynomial and gives sharp bounds in terms of the
initial degree and the intersection multiplicities.

\begin{mthm}
\label{main:characteristic-bounds}
Let $\A$ be an essential arrangement of $n+1$ lines and set
$e=e_\A$. Then
\begin{equation}\label{formula1}
  \Bour(\A)=\overline{\chi}_\A(e)  
\end{equation}
and
\[
\Bour(\A)\leq\binom e2.
\]
The latter bound is sharp for every $e\geq1$.

If $\A$ is not free and $m_1(\A)\geq m_2(\A)$ are the two largest
intersection multiplicities, then
\[
\Bour(\A)
\leq
\binom e2-\binom{m_2(\A)-1}{2},
\qquad
t(\A)\leq e-m_2(\A)+1.
\]
If $e\neq n+1-m_1(\A)$, then
\[
\Bour(\A)
\leq
\binom e2-\binom{m_1(\A)-1}{2},
\qquad
t(\A)\leq e-m_1(\A)+1.
\]
Moreover, in the nonfree case,
\[
\Bour(\A)=\binom e2
\]
if and only if $t(\A)=e-1$, and this is also equivalent to
$m_2(\A)=2$. In this case every Bourbaki ideal associated with a
minimal syzygy has the resolution
\[
0\longrightarrow R(-e)^{e-1}
\longrightarrow R(-(e-1))^e
\longrightarrow I_\nu
\longrightarrow0.
\]
\end{mthm}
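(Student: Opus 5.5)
Our plan starts from the formula $\Bour(\A)=d^2+e(e-d)-\tau(\A)$ of \cite{JNSBourbaki}, with $d=n$, and the standard expression of the global Tjurina number of a line arrangement. Since the singularities are ordinary, $\tau(\A)=\sum_p (m_p-1)^2$. The Möbius function of the intersection lattice gives $b_2(\A)=\sum_p (m_p-1)$. Counting pairs of lines gives $\sum_p\binom{m_p}{2}=\binom{n+1}{2}$. Combining these yields $\tau(\A)=n^2-b_2(\A)+n$. Substituting, we get $\Bour(\A)=n^2+e^2-en-n^2+b_2-n=e^2-ne+b_2-n=\overline{\chi}_\A(e)$, which is \eqref{formula1}.

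For the upper bound we use Lemma~\ref{lem:regularity-bourbaki-ideal}, which gives $\reg(I_\nu)\le e-1$. In the free case $\Bour=0$, so assume $\A$ is not free. Then $I_\nu$ is a codimension-two, saturated (perfect) ideal in $R$, i.e. the ideal of a zero-dimensional scheme $Z\subset\PP^2$ with $\deg Z=\Bour(\A)$. Write $t=t(\A)=\indeg I_\nu$. A zero-dimensional scheme with $\reg(I_Z)\le e-1$ and $I_Z$ vanishing in degrees $<t$ has Hilbert function equal to $\binom{j+2}{2}$ for $j<t$, and equal to $\deg Z$ for $j\ge e-2$. Its first difference is nonnegative and, after degree $t-1$, nonincreasing. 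It is supported in degrees $\le e-2$ and equals $j+1$ for $j\le t-1$. Summing bounds $\deg Z\le\binom{e}{2}$, with equality iff $\Delta H_Z(j)=j+1$ for all $j\le e-2$, i.e. iff $t=e-1$. In that case $Z$ has generic Hilbert function with $I_Z$ generated in degree $e-1$. By Hilbert--Burch the resolution is $0\to R(-e)^{e-1}\to R(-(e-1))^e\to I_\nu\to0$.

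For the refinement we use Proposition~\ref{prop:syzygies-associated-with-points}. Let $p$ have multiplicity $m$ with $n+1-m\ne e$. The syzygy $\rho_p\in\syz(J_\A)_{n+1-m}$ is not a multiple of $\nu$: its coordinates have no common factor, and it is not proportional to $\nu$ when the degrees differ. Its image in $I_\nu(e-d)$ is therefore a nonzero element of $I_\nu$ of degree $(n+1-m)-e+\ldots$. Adjusting the shifts, the Bourbaki map sends $\syz_j\to (I_\nu)_{j+e-n}$, giving an element of $I_\nu$ in degree $e-m+1$. Hence $t\le e-m+1$. If $n+1-m=e$, we use the fact that two distinct points give non-proportional syzygies. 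At most one point can produce a syzygy proportional to $\nu$. So for $m_2$ we can always choose a point $p\ne$ that one, and if $e\ne n+1-m_1$ the point of multiplicity $m_1$ works. Feeding $t\le e-m+1$ into the Hilbert-function count, the difference function is capped by $t$ from degree $t-1$ onward. This removes at least $\sum_{j=t}^{e-2}(j+1-t)=\binom{e-t}{2}\ge\binom{m-1}{2}$, giving $\Bour\le\binom e2-\binom{m-1}2$.

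Finally, for the equivalences, $\Bour=\binom e2\iff t=e-1$ was shown above. If $m_2\ge3$, the bound forces $\Bour<\binom e2$. Conversely, if $m_2=2$ then only one point can have multiplicity $>2$, so the arrangement is a near-pencil or generic-type. In that case $b_2$ is explicit and we verify $\overline\chi_\A(e)=\binom e2$ directly. This requires knowing $e$ for such arrangements ($e=n+1-m_1$ or $e$ from the generic case), which we expect to be the main obstacle. We would try to compute it from the known syzygies of $f=\ell_1\cdots\ell_{n+1}$ when all but one point are double points. Sharpness for every $e\ge1$ should follow from generic arrangements of $e+1$ lines or similar explicit families, again checked by the formula $\overline\chi_\A(e)$.
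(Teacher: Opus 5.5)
Your derivation of $\Bour(\A)=\overline{\chi}_\A(e)$, the triangular bound with its equality case and linear resolution, and both multiplicity refinements is correct and follows the paper's route. It uses $\reg(I_\nu)\le e-1$ from Lemma~\ref{lem:regularity-bourbaki-ideal}, together with a nonzero element $\pi_\nu(\rho_p)\in(I_\nu)_{e-m_p+1}$ for a point $p$ with $\rho_p\notin R\nu$. Bounding the first difference of the Hilbert function by $t$ from degree $t-1$ on amounts to the same estimate as the paper's $\dim_k(I_\nu)_{e-2}\ge\binom{m_p-1}{2}$, which the paper obtains by multiplying by that element.

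The genuine gap is the implication $m_2(\A)=2\Rightarrow\Bour(\A)=\binom e2$, and hence $t(\A)=e-1$. You leave this open pending a computation of $e$ from explicit syzygies. No such computation is needed, because the bounds you have already proved determine $e$. First, $m_2(\A)=2$ means one point of arbitrary multiplicity $m=m_1(\A)$ with all other points double, i.e.\ the class $L(n+1,m)$; it is not restricted to near-pencils and generic arrangements. Then $b_2(\A)=\binom{n+1}{2}-\binom{m-1}{2}$, and expanding gives
\[
\overline{\chi}_\A(e)-\binom e2=\binom{n-e}{2}-\binom{m-1}{2}.
\]
By Proposition~\ref{prop:syzygies-associated-with-points}, $e\le n+1-m$. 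If $e<n+1-m$, then $n-e\ge m>m-1\ge1$, so the right-hand side is strictly positive, contradicting $\Bour(\A)\le\binom e2$. Hence $e=n+1-m$, the right-hand side vanishes, and $\Bour(\A)=\binom e2$. This is exactly the paper's argument, written there with $q=n+1-e-m$ and the difference $q(2m+q-3)/2$.

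Your sharpness claim depends on the same missing step, since it needs $e_\A$ for generic arrangements. The argument above with $m=2$ gives $e_\A=n-1$; the paper instead cites Dimca--Sernesi. The witnesses are therefore a triangle for $e=1$ and a generic arrangement of $e+2$ lines for $e\ge2$. A generic arrangement of $e+1$ lines, as you propose, has initial degree $e-1$, not $e$.
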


The identity~\eqref{formula1} is one of the main points of the paper. The polynomial
$\overline{\chi}_\A(t)$ is determined by the intersection lattice,
whereas $e_\A$ need not be. Thus, once the lattice is fixed, the
possible variation of the Bourbaki degree is governed by the initial
degree of the first syzygy module. This simple formula will also be
useful later when we study the combinatorial behavior of the Bourbaki
degree.

The upper bound in Main Theorem~\ref{main:characteristic-bounds} is
substantially stronger than the general bound known for reduced plane
curves. In \cite[Theorem~2.10]{JNSBourbaki} it was proved that
\[
\Bour(X)\leq e^2
\]
for an arbitrary reduced plane curve $X$. For line arrangements we
replace the square by the triangular number
\[
\binom e2.
\]
Moreover, this is the best possible bound depending only on $e$, since
generic arrangements attain equality for every possible value of
$e$. Thus any further improvement must use additional information
about the arrangement.

The intersection multiplicities provide precisely such information.
The second largest multiplicity forces the Bourbaki degree to drop
from its maximal possible value by at least
\[
\binom{m_2(\A)-1}{2}.
\]
Under the additional assumption $e\neq n+1-m_1(\A)$, the maximal
multiplicity gives the stronger correction
\[
\binom{m_1(\A)-1}{2}.
\]
At the same time, these multiplicities control the type of the
arrangement. For instance, if $m_2(\A)=e$, then $t(\A)\leq1$, and
hence $t(\A)=1$; consequently, $\A$ is plus-one generated. At the
opposite extreme, the maximal value $\Bour(\A)=\binom e2$ occurs
exactly when $m_2(\A)=2$, or equivalently, when there is at most one
intersection point of multiplicity greater than two. In this case
$t(\A)=e-1$, and the associated Bourbaki ideals have a linear
Hilbert--Burch resolution. Thus the equality case of the Bourbaki
bound has both a geometric and a homological description.

As another consequence, Theorem~\ref{thm:low-degree-bourbaki-classification}
gives a complete description of the Bourbaki degree and the type for
arrangements with $e_\A\leq3$, using the known classifications of
arrangements with small initial degree.

The problem of finding optimal lower bounds for the global Tjurina
number of line arrangements remains open in general~\cite{Dimca-free-open-problems}. Related recent work includes
\cite{Dimca-remarks-freeness, Dimca-local-derivations,
Dimca-Kuhne-Pokora,Pokora-Tjurina-bounds}. In this direction, the above Main
Theorem gives new lower bounds for
$\tau(\A)$. Indeed,
\[
\tau(\A)=n(n-e)+e^2-\Bour(\A),
\]
and therefore the sharp triangular bound yields
\[
\tau(\A)\geq
n(n-e)+\binom{e+1}{2}.
\]
The correction term $\binom{e+1}{2}$ is optimal among those depending
only on $e$. For a nonfree arrangement, the multiplicity refinement
gives
\[
\tau(\A)\geq
n(n-e)+\binom{e+1}{2}
+\binom{m_2(\A)-1}{2},
\]
and, if $e\neq n+1-m_1(\A)$,
\[
\tau(\A)\geq
n(n-e)+\binom{e+1}{2}
+\binom{m_1(\A)-1}{2}.
\]

These inequalities sharpen the lower bounds obtained by Dimca in
\cite[Theorem~1.3]{Dimca-minimal-Tjurina}. In our notation, the first
bound in that result is
\[
\tau(\A)\geq
n(n-e)+\binom e2+\binom{m_2(\A)}2+1.
\]
The difference between our correction term and Dimca's is exactly
\[
e-m_2(\A)\geq0.
\]
Thus our estimate is never weaker and is strictly stronger whenever
$m_2(\A)<e$. Likewise, under the assumption
$e\neq n+1-m_1(\A)$, the improvement over the corresponding bound
involving the maximal multiplicity is exactly
\[
e-m_1(\A)\geq0.
\]
The refined bound is attained by a natural family of arrangements
with exactly two multiple points, as shown in
Proposition~\ref{prop:two-multiple-points}.

For free line arrangements, Pokora recently obtained a lower bound
for the global Tjurina number depending only on the number of lines
\cite[Corollary~4.2]{Pokora-Tjurina-bounds}. This result is of a
different nature, since it assumes freeness and does not involve the
initial degree or the intersection multiplicities. Its relation with
the Bourbaki formula and with the bounds above is discussed in
Remark~\ref{rem:comparison-dimca-tjurina}. 

\medskip

Over $\CC$, the Bourbaki degree is also closely related to the freeness defect,
another numerical measure of nonfreeness, defined by
\[
\nu(\A)
=
\max_q\dim_k
\bigl(J_\A^{\mathrm{sat}}/J_\A\bigr)_q.
\]
Using the formulas of Dimca
\cite[Theorem~1.7]{Dimca-minimal-Tjurina}, we prove in
Proposition~\ref{prop:bourbaki-freeness-defect} the precise relation
\[
\nu(\A)
=
\begin{cases}
\Bour(\A),
&\text{if }e\leq\left\lfloor\frac n2\right\rfloor,\\[2mm]
\Bour(\A)
-\left(
\left\lfloor\frac{n^2}{4}\right\rfloor-e(n-e)
\right),
&\text{if }e>\left\lfloor\frac n2\right\rfloor.
\end{cases}
\]
Thus, in the first range, the Bourbaki degree agrees exactly with the
freeness defect. In the second range, the difference between the two
invariants depends only on $n$ and $e$. In particular, the bounds
obtained above for the Bourbaki degree give corresponding bounds for
$\nu(\A)$.

The refined bounds involving the intersection multiplicities also
lead to a new sufficient condition for Terao's conjecture.

\begin{mthm}
\label{main:terao}
Assume that $k=\CC$. Let $\A$ be an essential free arrangement of $n+1$ lines with
exponents $(e,n-e)$, where $e\leq n-e$, and let $\B$ be an
arrangement such that $L(\B)\simeq L(\A)$. If
\[
\binom{e+1}{2}
<
n+\binom{m_2(\A)-1}{2},
\]
then $\B$ is free, necessarily with exponents $(e,n-e)$.

If, in addition, $e\neq n+1-m_1(\A)$, the same conclusion holds under
\[
\binom{e+1}{2}
<
n+\binom{m_1(\A)-1}{2}.
\]
In particular, Terao's conjecture holds for $\A$ whenever
\[
\binom{e+1}{2}<n+1.
\]
\end{mthm}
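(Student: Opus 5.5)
Main Theorem~\ref{main:terao} follows by contradiction from the multiplicity-refined bounds of Main Theorem~\ref{main:characteristic-bounds} together with formula~\eqref{formula1}. Everything rests on the fact that $\overline{\chi}$ is a lattice invariant.

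Since $L(\B)\simeq L(\A)$, $\B$ is essential, has $n+1$ lines, has the same multiplicities $m_1,m_2$, and satisfies $\overline{\chi}_\B=\overline{\chi}_\A$. Because $\A$ is free with exponents $(e,n-e)$, the reduced characteristic polynomial factors as
\[
\overline{\chi}_\A(t)=(t-e)(t-(n-e)).
\]
This follows from Terao's factorization theorem, or equivalently from $\Bour(\A)=0=\overline{\chi}_\A(e)$ together with the coefficient $-n$ of $t$. Set $e'=e_\B$. Formula~\eqref{formula1} applied to $\B$ gives
\[
\Bour(\B)=(e'-e)(e'-n+e).
\]
Also $e'\le n/2$ always holds for line arrangements, since the syzygy module has rank two with degrees summing to at most $n$ (Dimca's bound $e\le n/2$ or the Tjurina/du Plessis--Wall bounds). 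Hence $\Bour(\B)\ge0$ forces either $e'\le e$ or $e'\ge n-e$. When $e<n-e$, the second case can only occur with $e'=n/2$, and I would handle it by the same inequality below.

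Suppose $\B$ is not free, so $\Bour(\B)>0$ and $e'\ne e$. Main Theorem~\ref{main:characteristic-bounds} gives
\[
(e'-e)(e'-n+e)\le \binom{e'}{2}-\binom{m_2-1}{2}.
\]
If $e'<e$, put $e'=e-s$ with $s\ge1$. The left side equals $s(n-2e+s)$, so
\[
\binom{m_2-1}{2}\le \binom{e-s}{2}-s(n-2e+s).
\]
I then need to show that this contradicts $\binom{e+1}{2}<n+\binom{m_2-1}{2}$, i.e.\ that $\binom{e-s}{2}-s(n-2e+s)\le \binom{e+1}{2}-n$. The right side minus the left side is a quadratic in $s$. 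At $s=1$ it equals $e+(e-1)+(n-2e+1)-n=0$, using $\binom{e+1}{2}-\binom{e-1}{2}=2e-1$, so the inequality holds with equality there. For larger $s$ the difference increases, because the term $s(n-2e+s)$ grows faster than $\binom{e-s}{2}$ decreases, given $n\ge2e$. Strictness in the hypothesis then gives the contradiction. The case $e'\ge n-e$ (only possible when $e'=n/2\ge n-e$, i.e.\ $e=n/2$, which is the same as $e'=e$) forces $\Bour(\B)=0$, so $\B$ is free.

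For the $m_1$ version, I run the same computation with the second bound, which requires $e'\ne n+1-m_1$. Here I would argue that $e'\le e\le n-e$, and that $n+1-m_1$ is the degree of the point syzygy of Proposition~\ref{prop:syzygies-associated-with-points}. Thus either $e\ne n+1-m_1$ transfers, or, if $e'=n+1-m_1$, the point syzygy realises $e'$ and then $\B$ is free by the standard criterion: $m_1$ with $n+1-m_1\le m_1-1$ is a free situation. This sub-case is the main obstacle and may need a separate argument via the known classification for $e=n+1-m_1$. The final clause is the case $m_2=2$, where the binomial vanishes.

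Once $\B$ is free, its exponents are the roots of $\overline{\chi}_\B=\overline{\chi}_\A$, namely $(e,n-e)$.
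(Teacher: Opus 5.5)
The main gap is the branch $e_\B>n-e$. Your claim that $e_\B\le n/2$ for every line arrangement is false. A generic arrangement of $n+1$ lines has $e=n-1$ (Example~\ref{ex:generic-arrangements-bourbaki}), and the nine-line Ziegler pair contains a nonfree arrangement with $e=5>8/2$. In fact, for a nonfree arrangement the two smallest generator degrees of $\syz(J_\A)$ sum to at least $n+1$.

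So once $\Bour(\B)=(e_\B-e)(e_\B-(n-e))>0$, you must genuinely exclude $e_\B>n-e$. The upper bounds of Main Theorem~\ref{main:characteristic-bounds} do not do this. Take the non-Fano arrangement: $n=6$, exponents $(3,3)$, $m_2=3$, so your hypothesis $6<7$ holds. A realization $\B$ with $e_\B=4$ would have $\Bour(\B)=\overline{\chi}(4)=1\le\binom42-\binom22$. The $m_1$-refinement is unavailable there because $e_\B=n+1-m_1$.

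What this range needs is a \emph{lower} bound for the Bourbaki degree. The paper gets it from the refined du Plessis--Wall upper bound for $\tau$, which is one place where $k=\CC$ is used. Via \eqref{eq:tjurina-bourbaki-arrangement} this reads $\Bour(\B)\ge\binom{2e_\B+1-n}{2}$ for $e_\B>n/2$ (Corollary~\ref{cor:large-e-bourbaki-window}). Write $e_\B=n-e+q$ with $q\ge1$ and $h=n-2e\ge0$. Then $\Bour(\B)=q(h+q)<\binom{h+2q+1}{2}$, a contradiction, so $e_\B<e$. From there your computation is correct: the quantity you need to be nonnegative equals $\frac{(s-1)(2n-2e+s)}{2}$, which is exactly the paper's identity.

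Two further steps need correcting. In the $m_1$ version, the sub-case you flag as the main obstacle cannot occur. The bound \eqref{eq:initial-degree-multiplicity-bound} for $\A$ gives $e\le n+1-m_1$, so $e\ne n+1-m_1$ means $e<n+1-m_1$. Once the upper branch is excluded, $e_\B<e<n+1-m_1(\B)$, and the maximal-multiplicity bound applies to $\B$ directly. Your fallback claim is false anyway: three concurrent lines plus two general lines have $e=n+1-m_1=2\le m_1-1$, yet $\overline{\chi}(t)=t^2-4t+5$ has no real roots, so that arrangement is not free.

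For the final clause, $m_2=2$ is precisely the case the first criterion does \emph{not} cover. There $\binom{m_2-1}{2}=0$, so the criterion needs $\binom{e+1}{2}<n$, leaving $\binom{e+1}{2}=n$ open. The correct split is as follows.
\begin{itemize}
\item If $m_2\ge3$, then $\binom{e+1}{2}<n+1\le n+\binom{m_2-1}{2}$, and the first criterion applies.
\item If $m_2=2$, the lattice is of type $L(n+1,m_1)$, where freeness is combinatorial; the paper cites Dimca, Ibadula and M\u{a}cinic for this. Alternatively, within the paper: a nonfree $\B$ with $m_2=2$ would have $e_\B=n+1-m_1\ge e$, by Theorem~\ref{thm:extremal-triangular-arrangements} and \eqref{eq:initial-degree-multiplicity-bound} for $\A$. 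This contradicts $e_\B<e$.
\end{itemize}
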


Main Theorem~\ref{main:terao} gives a criterion for Terao's conjecture
which takes the intersection multiplicities into account. The first
condition involves the second largest multiplicity $m_2(\A)$, while,
under the additional assumption $e\neq n+1-m_1(\A)$, the maximal
multiplicity gives a stronger criterion. Since these multiplicities
are determined by the intersection lattice, the result uses
combinatorial information that is not present in criteria depending
only on the number of lines and the smaller exponent. In particular,
it may apply even when such uniform numerical criteria do not.

As a uniform consequence, Terao's conjecture holds whenever
\[
\binom{e+1}{2}<n+1,
\]
or equivalently,
\[
n\geq\binom{e+1}{2}.
\]
For comparison, Dimca's criterion
\cite[Corollary~3.1]{Dimca-minimal-Tjurina} is
\[
e<\frac{\sqrt{8n+49}-3}{2},
\]
which, since $e$ is an integer, is equivalent to
\[
n\geq\binom{e+1}{2}+e-4.
\]
Thus the two criteria agree for $e=4$, while for every $e\geq5$
our criterion lowers the required value of $n$ by exactly $e-4$.
The first strict improvement occurs for $e=5$ and $n=15$, that is,
for arrangements of sixteen lines. More importantly, the criterion
involving the intersection multiplicities can apply even when neither
uniform numerical condition is satisfied.

\medskip

The sharp bound
\[
\Bour(\A)\leq\binom{e_\A}{2}
\]
also leads naturally to the study of addition and deletion. We measure
the gap from equality by
\[
\Delta(\A):=
\binom{e_\A}{2}-\Bour(\A).
\]
Thus $\Delta(\A)\geq0$ by Main
Theorem~\ref{main:characteristic-bounds}, and we call $\A$
\emph{extremal} when $\Delta(\A)=0$.

Let $\A'$ be an essential arrangement of $n$ lines and let
\[
\A=\A'\cup\{H\}.
\]
If $s$ is the number of distinct points in which the lines of $\A'$
meet $H$, set
\[
c_H:=n-s.
\]
Geometrically, $c_H$ measures the collisions of the intersections on
the added line. More precisely, if a point $p\in H$ has multiplicity
$m_p$ in $\A'$, then it contributes $m_p-1$ to $c_H$.

Our addition--deletion formula determines exactly how the defect
changes when a line is added.

\begin{mthm}
\label{main:addition-deletion}
With the notation above, set $e'=e_{\A'}$ and $e=e_\A$. Then
$e=e'$ or $e=e'+1$, and
\[
\Delta(\A)-\Delta(\A')
=
\begin{cases}
c_H-(n-e'-1),&\text{if }e=e',\\[2mm]
c_H,&\text{if }e=e'+1.
\end{cases}
\]
In particular,
\[
\Delta(\A)\geq\Delta(\A').
\]
Hence extremality is inherited by every essential deletion.

Moreover, an essential arrangement is extremal if and only if it can
be obtained from a triangle by successive additions of the following
two types:
\begin{enumerate}
\item a line containing no intersection point of the arrangement
constructed so far;
\item a line through a point of maximal multiplicity and through no
other intersection point of the arrangement constructed so far.
\end{enumerate}
\end{mthm}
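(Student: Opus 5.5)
The plan is to derive everything from the identity \eqref{formula1} of Main Theorem~\ref{main:characteristic-bounds}, $\Bour(\A)=\overline{\chi}_\A(e_\A)$, together with the deletion--restriction behaviour of the characteristic polynomial.

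\textbf{Step 1: $e'\le e\le e'+1$.} Identify $\syz(J_\A)$ with the module $D_0(\A)$ of logarithmic derivations killing $f_\A$ (modulo the Euler derivation). Then $D_0(\A)\subseteq D_0(\A')$, which gives $e\ge e'$. Conversely, $\ell_H\theta'\in D(\A)$ for every $\theta'\in D(\A')$, and projecting off the Euler part keeps the degree. Hence $e\le e'+1$.

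\textbf{Step 2: the defect formula.} Since $\A$ and $\A'$ are essential, $b_2(\A)=b_2(\A')+s=b_2(\A')+n-c_H$, because $H$ adds exactly $s$ new flats. Recall that $\A$ has $n+1$ lines and $\A'$ has $n$ lines. Comparing $t^2-nt+b_2(\A)-n$ with $t^2-(n-1)t+b_2(\A')-(n-1)$ gives
\[
\overline{\chi}_\A(t)-\overline{\chi}_{\A'}(t)=-t+n-c_H-1 .
\]
\begin{itemize}
\item If $e=e'$, then $\Delta(\A)-\Delta(\A')=-(\overline{\chi}_\A(e')-\overline{\chi}_{\A'}(e'))=c_H-(n-e'-1)$.
\item If $e=e'+1$, use $\binom{e'+1}{2}-\binom{e'}{2}=e'$ and $\overline{\chi}_{\A'}(e'+1)-\overline{\chi}_{\A'}(e')=2e'+2-n$. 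A short computation then gives $\Delta(\A)-\Delta(\A')=c_H$.
\end{itemize}

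\textbf{Step 3: monotonicity (the main obstacle).} When $e=e'+1$ we have $\Delta(\A)-\Delta(\A')=c_H\ge0$, so nothing is needed. When $e=e'$ we must show $s\le e+1$, i.e. $c_H\ge n-e-1$. I would take a minimal $\theta\in D_0(\A)_e$ and restrict it to $H\cong\PP^1$.

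First, $\theta$ is not divisible by $\ell_H$. Otherwise $\theta/\ell_H\in D_0(\A')$ would have degree $e-1<e'$, a contradiction. So $\theta|_H$ is a nonzero tangent field on $H$, given by a binary form of degree $e$ modulo the Euler field. It is tangent at each of the $s$ points $H\cap\A'$.

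If $\theta|_H$ is not a multiple of the Euler field, it is a genuine vector field of degree $e$ on $\PP^1$. Such a field has at most $e+1$ zeros, so $s\le e+1$.

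The case $\theta|_H=g\cdot E$ is the delicate point. There I would subtract $gE$ and use that $\theta$ is nonzero in $D_0$, showing that $\theta-gE$ is then divisible by $\ell_H$. This again contradicts minimality of $e$.

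\textbf{Step 4: characterisation of extremal arrangements.} The triangle has $e=1$ and $\Bour=0$, so it is extremal. By the formulas of Step~2, an addition of type~(1) is extremal-preserving. It has $c_H=0$ and, one checks, $e=e'+1$. An addition of type~(2) through a point of maximal multiplicity $m$ has $c_H=m-1$, and one checks $e=e'$ and $m-1=n-e'-1$ using $e'=n-m$ in the relevant range. By Step~3 the defect is then still $0$.

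Conversely, let $\A$ be extremal with at least four lines. If $\A$ is free, $\Bour=0=\binom e2$ forces $e\le1$, so $\A$ is a near pencil, which is clearly built by type~(2) additions. If $\A$ is not free, Main Theorem~\ref{main:characteristic-bounds} gives $m_2(\A)=2$, so there is at most one point $p$ of multiplicity $>2$. I would choose $H$ so that $\A'$ stays essential: a line not through $p$ if one exists with $\A'$ essential, and otherwise a line through $p$. By monotonicity, $\A'$ is extremal. Vanishing of the defect difference then forces either $c_H=0$ with $e=e'+1$ (type~(1)), or $c_H=n-e'-1$ with $e=e'$. Since $\A$ has only double points besides $p$, the second alternative can only occur if $H$ passes through $p$ and $p$ has maximal multiplicity in $\A'$, which is type~(2). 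Induction on the number of lines concludes.
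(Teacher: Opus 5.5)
Your proposal is correct, and its skeleton matches the paper's. Both use $\Bour=\overline{\chi}(e)$, the identity $\overline{\chi}_\A(t)=\overline{\chi}_{\A'}(t)-t+s-1$, the bounds $e'\le e\le e'+1$ and $s\le e+1$ in the stationary case, and then induction by deletion for the constructive characterization.

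The difference is that the paper imports the homological inputs from \cite[Proposition~2.12(1), Theorem~3.3, Proposition~4.14]{Abe-Dimca-Stic}, whereas you prove them directly:
\begin{enumerate}
\item The inclusions $D(\A)\subseteq D(\A')$ and $\ell_H D(\A')\subseteq D(\A)$ give $e'\le e\le e'+1$.
\item Restricting a minimal derivation to $H$, the binary form $x\bar b-y\bar a$ of degree $e+1$ vanishes at the $s$ points of $\A^H$. This gives $s\le e+1$ unless the restriction is an Euler multiple.
\item For a type~(2) addition, $e\le e'$ follows at once from \eqref{eq:initial-degree-multiplicity-bound} applied to the new $(m+1)$-fold point. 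The paper instead cites Proposition~4.14 of \cite{Abe-Dimca-Stic} for this.
\end{enumerate}
Your route makes the argument self-contained at the cost of a few lines. It also makes clear that \cite[Proposition~2.12(3)]{Abe-Dimca-Stic}, used in the paper, is needed only for the sign of $\Bour(\A)-\Bour(\A')$ in the jumping case, not for the $\Delta$ statement.

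A few points to tighten:
\begin{enumerate}
\item In Step~1 the inclusion is $D(\A)\subseteq D(\A')$, not $D_0(\A)\subseteq D_0(\A')$. You must pass to $D/RE$, where injectivity holds because $D(\A)\cap RE=RE$.
\item In the Euler case of Step~3, $(\theta-gE)/\ell_H$ lies in $D(\A')$ but need not lie in $D(\A)$, since its value on $\ell_H$ need not be divisible by $\ell_H$. Its class modulo $RE$ is nonzero because $\theta\notin RE$. So the contradiction is $e'\le e-1$, which uses $e=e'$ rather than the minimality of $e_\A$.
\item In Step~4, the identity $e'=n-m_1(\A')$ for extremal $\A'$ comes from Theorem~\ref{thm:extremal-triangular-arrangements}(d) in the nonfree case, and from the near-pencil description when $e'=1$.
\item Also in Step~4, the vanishing of the increment for a type~(2) addition follows from the Step~2 formula, not from Step~3.
\end{enumerate}
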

The monotonicity of $\Delta$ is one of the main consequences of this
formula. It shows that the defect can only increase when lines are
added, and therefore every essential deletion of an extremal
arrangement is again extremal. The equality cases also have a simple
geometric interpretation. If $e=e'+1$, equality
$\Delta(\A)=\Delta(\A')$ holds precisely when $c_H=0$, that is, when
the added line contains no intersection point of $\A'$. In the
 case $e=e'$, equality holds precisely when
$c_H=n-e'-1$. When $\A'$ is extremal, the latter condition has a
geometric characterization: the added line passes through a point of
maximal multiplicity and through no other intersection point of
$\A'$. Thus, within the extremal class, the two operations appearing
in Main Theorem~\ref{main:addition-deletion} are exactly the additions
that preserve the defect. Starting from a triangle, this gives a
constructive characterization of all extremal arrangements.

\medskip

The last part of the paper concerns the combinatorial behavior of the
Bourbaki degree. Since the reduced characteristic polynomial is
determined by the intersection lattice, the formula~\eqref{formula1}
shows that, within a fixed lattice class, any variation of the
Bourbaki degree must come from the initial degree $e_\A$. This leads
to an exact comparison for two realizations of the same lattice.

\begin{mthm}
\label{main:combinatorial}
Let $\A$ and $\B$ be essential arrangements of $n+1$ lines with
$L(\A)\simeq L(\B)$. Then
\[
\Bour(\A)-\Bour(\B)
=
(e_\A-e_\B)(e_\A+e_\B-n).
\]
Consequently, $\Bour(\A)=\Bour(\B)$ if and only if
$e_\A=e_\B$ or $e_\A+e_\B=n$.

Over $\CC$, the Bourbaki degree is determined by the intersection
lattice for arrangements with at most eight lines. For every number
of lines at least nine, there exist arrangements with isomorphic
intersection lattices and different Bourbaki degrees. Hence nine is
the smallest number of lines for which the Bourbaki degree can fail
to be combinatorially determined.
\end{mthm}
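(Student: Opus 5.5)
The plan has three parts: the algebraic identity, combinatorial invariance for at most eight lines, and explicit examples from nine lines on.

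\emph{The identity.} By Main Theorem~\ref{main:characteristic-bounds}, $\Bour(\A)=\overline{\chi}_\A(e_\A)$ and $\Bour(\B)=\overline{\chi}_\B(e_\B)$. Since $L(\A)\simeq L(\B)$, the reduced characteristic polynomials coincide; both are $t^2-nt+c$ with $c=b_2-n$ depending only on the lattice. Subtracting gives
\[
\Bour(\A)-\Bour(\B)=e_\A^2-e_\B^2-n(e_\A-e_\B)=(e_\A-e_\B)(e_\A+e_\B-n).
\]
The characterization of equality follows at once.

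\emph{At most eight lines.} Here $n+1\le 8$. Suppose $e_\A\neq e_\B$; we must show $e_\A+e_\B=n$. I would use the standard facts on the initial degree. First, $e_\A\le n+1-m_1$, using the syzygy of a point of maximal multiplicity from Proposition~\ref{prop:syzygies-associated-with-points}. Second, when $2e_\A<n$ (over $\CC$), Dimca's results give $e_\A=n+1-m_1$, or $\A$ has a combinatorially determined structure; in that range $\tau$ and the du Plessis--Wall inequalities pin $e$ down from $\tau$, which is combinatorial. Concretely, $\tau$ is combinatorial, and the du Plessis--Wall bounds
\[
(n-e)(n-e-1)+\ldots\le\ldots
\]
force $e$ to be determined by $\tau$ and $n$ whenever $e<n/2$. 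Hence, if $e_\A\ne e_\B$, both values lie in the range $e\ge \lceil n/2\rceil$, where the only ambiguity left is limited. For $n+1\le 8$ the range $[\lceil n/2\rceil,\lfloor (n+1)/2\rfloor]$-type constraints, combined with $\Delta\ge0$ and the upper bound $e\le n+1-m_1$, should leave only pairs summing to $n$. Any remaining cases I would settle by consulting the known classification of line arrangements with at most eight lines. The point is that nonconstancy of $e$ within a lattice class first occurs at nine lines: for eight or fewer lines, $e$ is combinatorial except possibly for symmetric pairs $\{e,n-e\}$, and I would verify this via the Dimca--Pokora type results on small arrangements cited in the introduction. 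This step, showing that $e$ is combinatorial (or varies only as $e\leftrightarrow n-e$) for at most eight lines, is the main obstacle. I expect to rely on the literature's enumeration of lattices rather than a conceptual argument.

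\emph{Nine or more lines.} I would take an explicit pair of nine-line arrangements with the same lattice and different $e$, for instance a Ziegler-type pair or the examples of Dimca--Pokora on nine lines, with $e_\A\ne e_\B$ and $e_\A+e_\B\ne 8$. The identity above then gives different Bourbaki degrees. To pass to any $n+1\ge 9$, I would add generic lines, each meeting the existing arrangement only in double points. Such additions preserve the lattice isomorphism. By Main Theorem~\ref{main:addition-deletion} each one raises $e$ by $0$ or $1$, and for a generic line I expect $e$ to increase by exactly one in both arrangements, since $c_H=0$ and $\Delta$ is unchanged. If that holds, the Bourbaki degrees remain distinct, because $\Delta$ and $e$ differ in a controlled way. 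I would check this carefully from the formula $\Bour=\binom e2-\Delta$.
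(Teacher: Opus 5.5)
Your identity and your nine-line construction are the paper's own. The identity is Proposition~\ref{prop:bourbaki-same-lattice}. For nine lines the paper takes the Dimca--Pokora pair with $e=4,5$ and $\overline{\chi}(t)=t^2-8t+18$, and then uses Corollary~\ref{cor:generic-line-bourbaki-addition}.

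\textbf{The gap: the eight-line step.} The numerical mechanism you sketch cannot prove it.
\begin{itemize}
\item Your claim that $\tau$ and the du Plessis--Wall bounds pin down $e$ when $e<n/2$ is false. For $n=7$, even with the triangular bound, $e=3$ allows $34\le\tau\le 37$ and $e=4$ allows $31\le\tau\le 36$, so the ranges overlap.
\item More seriously, your listed constraints do not reduce the ambiguity to symmetric pairs. Take eight lines ($n=7$) with only double and triple points and $n_3\in\{4,5\}$. Then $\overline{\chi}(t)=t^2-7t+21-n_3$. A realization with $e=4$ would have $\Bour=9-n_3$, and one with $e=5$ would have $\Bour=11-n_3$; these differ by $2$.
\item Yet both values are compatible with everything you list and more: $e\le n+1-m_1=5$, nonfreeness, the triangular bound, the multiplicity refinements of Theorem~\ref{thm:upper-bound-line-arrangements}, and the window of Corollary~\ref{cor:large-e-bourbaki-window}.
\end{itemize}
So no argument built from these inequalities proves the eight-line statement. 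What is needed is the actual theorem of Dimca and Pokora: for arrangements with at most eight lines, $e_\A$ is determined by $L(\A)$. The paper simply cites this and concludes from $\Bour(\A)=\overline{\chi}_\A(e_\A)$. You mention this result only as a hoped-for fallback. Once you invoke it, your hedge about symmetric pairs and the appeal to a classification are unnecessary, and the du Plessis--Wall discussion should be dropped.

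\textbf{Two smaller points in the nine-line step.}
\begin{itemize}
\item The jump $e=e'+1$ for a generic line does not follow from ``$\Delta$ is unchanged'', since that is a consequence of the jump. The correct argument: if $e=e'$, Main Theorem~\ref{main:addition-deletion} with $c_H=0$ would give $\Delta(\A)-\Delta(\A')=-(n-e'-1)<0$, because $e'\le n-2$. This contradicts monotonicity.
\item Your condition $e_\A+e_\B\neq 8$ does not survive repeated generic additions. After $q$ of them, your identity gives $\Bour(\B_q)-\Bour(\A_q)=(e_\B-e_\A)(e_\A+e_\B-8+q)$, so you actually need $e_\A+e_\B>8$. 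For the Dimca--Pokora pair the second factor is $1+q$, which gives the paper's difference $q+1$.
\end{itemize}
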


The statement on the number of lines is sharp. The nine-line examples come from the recent Ziegler pair of Dimca and
Pokora \cite{Dimca-Pokora-nine}, for which the two arrangements have
the same intersection lattice but different initial degrees. The
formula above then shows that their Bourbaki degrees are different. 
By successive generic additions, using
Corollary~\ref{cor:generic-line-bourbaki-addition}, one obtains
examples for every larger number of lines. Thus the first possible failure of the combinatorial
invariance of the Bourbaki degree occurs with nine lines.

The second possibility allowed by Main
Theorem~\ref{main:combinatorial} is more subtle. Suppose that two
arrangements with the same intersection lattice have different
initial degrees but the same Bourbaki degree. Set
\[
q=\min\{e_\A,e_\B\},
\qquad
b=\Bour(\A)=\Bour(\B).
\]
Then necessarily
\[
\{e_\A,e_\B\}=\{q,n-q\}.
\]
Moreover, both arrangements are nonfree, and, writing $m_1$ for their
common maximal intersection multiplicity, one has
\[
\binom{n-2q+1}{2}
\leq
b
\leq
\binom q2-\binom{m_1-1}{2}.
\]
In particular,
\[
\frac{n+1}{3}<q<\frac n2.
\]
These restrictions show that this exceptional case cannot occur with
fewer than ten lines. For ten lines the only possible initial degrees
are $4$ and $5$. In this case the maximal intersection multiplicity
must be $4$, and the common Bourbaki degree is either $1$ or $2$.
Thus, for ten lines, the problem is reduced to a small number of
possible weak-combinatorial types. Whether this exceptional situation
actually occurs is left as an open question.

The characteristic-polynomial formula also gives a direct connection
with modularity. Let $p$ be an intersection point of multiplicity
$m$, and define
\[
\delta_p(\A)
:=
\sum_{\substack{r\neq p\\ \overline{pr}\notin\A}}
(m_r-1).
\]
Then $\delta_p(\A)=0$ precisely when $p$ is modular. We prove the
identity
\[
\overline{\chi}_\A(t)
=
(t-m+1)\bigl(t-(n+1-m)\bigr)+\delta_p(\A).
\]
Consequently, if the syzygy associated with $p$ has minimal degree,
that is, if
\[
e_\A=n+1-m,
\]
then
\[
\Bour(\A)=\delta_p(\A).
\]
Thus, in this case, the Bourbaki degree measures exactly the failure
of the point $p$ to be modular.

\section*{Acknowledgments}
This work grew out of the second author's doctoral dissertation at IASBS. The authors are grateful to  Rashid Zaare-Nahandi, Felipe Monteiro and  Tomás S.~R.~Silva for insightful discussions and helpful suggestions. The first author was partially supported by FAPESP Grant No.~2026/08480-1

\section{Preliminaries}
\label{sec:preliminaries}
We begin by recalling the basic facts on Bourbaki ideals of plane curves and the combinatorial invariants of line arrangements that will be used throughout the paper.

\subsection{Bourbaki degree}
Let $R=k[x,y,z]$ be the standard graded polynomial ring over an algebraically closed field $k$ of characteristic zero, with irrelevant maximal ideal $\fm=(x,y,z)$. Let $f\in R$ be a reduced homogeneous polynomial of degree $d+1\geq2$, defining a plane curve $X=V(f)\subseteq\PP^2$, and let $J_f=(f_x,f_y,f_z)$ be its gradient ideal. Since $f$ is reduced, $\hht J_f\geq2$. If $X$ is singular, then $\hht J_f=2$ and $\proj(R/J_f)$ is the singular subscheme of $X$; if $X$ is smooth, then $J_f$ is $\fm$-primary.

The syzygy module of $J_f$ is defined by the graded exact sequence
\[
0\longrightarrow\syz(J_f)\longrightarrow R^3
\xrightarrow{(f_x,f_y,f_z)}J_f(d)\longrightarrow0.
\]
Thus $\syz(J_f)=\bigoplus_{q\geq0}\syz(J_f)_q$ is a reflexive graded $R$-module of rank two. We set
\[
e=\indeg\bigl(\syz(J_f)\bigr)
=\min\{q:\syz(J_f)_q\neq0\}
\]
and assume throughout that $e\geq1$, thereby excluding the cone case.

If $X$ is singular, then $\dim(R/J_f)=1$ and by~\cite[Proposition~3.2]{JNSBourbaki} $$\deg(R/J_f)=\tau(X),$$ where $\tau(X)=\sum_{p\in\Sing(X)}\tau_p(X)$ is the global Tjurina number of $X$.

Let $\Der_k(R)$ denote the $R$-module of $k$-linear derivations of $R$, and set
\[
\Der_f(R)=\{\delta\in\Der_k(R):\delta(f)\in(f)\},
\qquad
\Der_f(R)_0=\{\delta\in\Der_k(R):\delta(f)=0\}.
\]
Following Saito \cite{SaitoLogarithmic}, the curve $X$ is called \emph{free} if $\Der_f(R)$ is a free $R$-module. Under the natural identification $\Der_k(R)\simeq R\partial_x\oplus R\partial_y\oplus R\partial_z$, one has $\Der_f(R)_0\simeq\syz(J_f).$ Since $f$ is homogeneous and $\operatorname{char}(k)=0$, Euler's identity yields $\Der_f(R)=\Der_f(R)_0\oplus R\mathcal{E},$ where $\mathcal E=x\partial_x+y\partial_y+z\partial_z$ is the Euler derivation. Consequently, $X$ is free if and only if $\syz(J_f)$ is a free $R$-module of rank two. In this case,
\[
\syz(J_f)\simeq R(-a_1)\oplus R(-a_2),
\qquad a_1+a_2=d,
\]
where $(a_1,a_2)$ are the exponents of $X$. We always order them so that $a_1\leq a_2$; hence $a_1=e$ and $a_2=d-e$.

We now recall the Bourbaki construction for reduced plane curves from \cite[Section~2]{JNSBourbaki}, in the form needed below.

Choose a minimal homogeneous generator $\nu\in\syz(J_f)_e$. The inclusion $R(-e)\xrightarrow{\nu}\syz(J_f)$ has torsionfree cokernel of rank one. Hence there is a homogeneous ideal $I_\nu\subseteq R$ and an exact sequence
\begin{equation}
\label{eq:bourbaki-sequence}
0\longrightarrow R(-e)
\xrightarrow{\ \nu\ }
\syz(J_f)
\xrightarrow{\ \pi_\nu\ }
I_\nu(e-d)
\longrightarrow0.
\end{equation}
We call \eqref{eq:bourbaki-sequence} the \emph{Bourbaki sequence} associated with $\nu$, and $\pi_\nu$ the corresponding \emph{Bourbaki map}. Thus $\ker(\pi_\nu)=R\nu$.

If $X$ is free, then $I_\nu=R$. Otherwise, $I_\nu$ is a proper homogeneous perfect ideal of height two; we call it the \emph{Bourbaki ideal} associated with $\nu$, and
\[
B_\nu=\proj(R/I_\nu)\subseteq\PP^2
\]
the corresponding \emph{Bourbaki scheme}.

The Bourbaki ideal can also be read directly from a minimal resolution of $\syz(J_f)$. Suppose that
\[
0\longrightarrow F_1
\xrightarrow{\left[\begin{smallmatrix}\lambda\\ \psi\end{smallmatrix}\right]} R(-e)\oplus F_0 \xrightarrow{(\nu,\varphi)}
\syz(J_f)
\longrightarrow0
\]
is a minimal graded free resolution obtained from a minimal generating set containing $\nu$. Then $I_\nu$ has the minimal resolution
\begin{equation}\label{HBB}
0\longrightarrow F_1(d-e)
\xrightarrow{\ \psi\ } F_0(d-e) \longrightarrow I_\nu \longrightarrow0.
\end{equation}
Thus, in the nonfree case, $\psi$ is a Hilbert--Burch matrix for $I_\nu$.

The \emph{Bourbaki degree} of $X$ is
\[
\Bour(X)=
\begin{cases}
\deg(R/I_\nu),&\text{if $X$ is not free},\\ 0,&\text{if $X$ is free}.
\end{cases}
\]
Although $I_\nu$ may depend on the choice of $\nu$, its degree does not. More precisely,
\begin{equation}
\label{eq:bourbaki-degree-formula}
\Bour(X)=d^2+e(e-d)-\tau(X).
\end{equation}
For a smooth curve we use the convention $\tau(X)=0$. If $X$ is singular, then $\Bour(X)=0$ if and only if $X$ is free. We also recall the notions of nearly free and plus-one generated curves. A reduced curve of degree $d+1$ is \emph{plus-one generated} if $\syz(J_f)$ has three minimal homogeneous generators of degrees $d_1\leq d_2\leq d_3$ satisfying $d_1+d_2=d+1$. The integers $(d_1,d_2,d_3)$ are its exponents, and $d_3$ is its \emph{level}. A plus-one generated curve is nearly free precisely when $d_2=d_3$.

If $X$ is plus-one generated with exponents $(d_1,d_2,d_3)$, then $\Bour(X)=d_3-d_2+1$. In particular, $\Bour(X)=1$ if and only if $X$ is nearly free, while $\Bour(X)=2$ if and only if $X$ is plus-one generated with exponents $(e,d-e+1,d-e+2)$, where $e\leq(d+1)/2$.


\subsection{Line arrangements}
\label{subsec:line-arrangements}

A \emph{line arrangement} in $\PP^2$ is a finite collection $\A=\{L_1,\ldots,L_{n+1}\}$ of distinct projective lines. Choose pairwise nonproportional linear forms $\ell_i\in R_1$ such that $L_i=V(\ell_i)$, and set
\[
f_\A=\prod_{i=1}^{n+1}\ell_i,\qquad X_\A=V(f_\A),\qquad J_\A=J_{f_\A}.
\]
We write $e_\A:=\indeg\bigl(\syz(J_\A)\bigr)$. The arrangement $\A$ is \emph{essential} if its lines are not all concurrent.  If all the lines pass through one point, then $\A$ is a \emph{pencil}; after a linear change of coordinates, $f_\A\in k[x,y]$, and hence $e_\A=0$. Conversely, if $e_\A=0$, then a nonzero constant directional derivative of $f_\A$ vanishes. After a linear change of coordinates, $f_\A$ is therefore independent of one variable, and $\A$ is a pencil. Thus $e_\A\geq1$ if and only if $\A$ is essential.

We use the standard lattice-theoretic terminology and conventions of \cite[Chapter~2]{Orlik-Terao}. Let $c\A$ denote the central arrangement in $k^3$ defined by $\ell_1,\ldots,\ell_{n+1}$. Its \emph{intersection lattice} is
\[
L(\A):=L(c\A)
=
\left\{
\bigcap_{i\in I}V(\ell_i): I\subseteq\{1,\ldots,n+1\} \right\},
\]
ordered by reverse inclusion, with the empty intersection understood as $k^3$. The elements of $L(\A)$ are called \emph{flats}; the rank of a flat $X\in L(\A)$ is $\operatorname{rk}(X)=\operatorname{codim}_{k^3}(X)$. Two arrangements $\A$ and $\B$ are \emph{combinatorially equivalent} if $L(\A)\simeq L(\B)$ as ranked lattices. Notice that $\A$ is essential precisely when $c\A$ has rank three.

The rank-two elements of $L(\A)$ are the one-dimensional flats of $c\A$ and, after projectivization, correspond to the intersection points of $\A$. We denote their set by $L_2(\A)$. For $p\in L_2(\A)$, let $m_p=m_p(\A):=|\{L\in\A:p\in L\}|$ be the multiplicity of $\A$ at $p$. A point with $m_p=j$ is called a $j$-fold point. We set
\[
n_j=n_j(\A):=
|\{p\in L_2(\A):m_p=j\}|, \qquad m(\A):=\max_{p\in L_2(\A)}m_p.
\]
The sequence $(n_2,n_3,\ldots)$ is the \emph{weak combinatorics} of $\A$. It is determined by $L(\A)$, although in general it does not determine the full intersection lattice. The arrangement is \emph{generic} if every intersection point is a double point.

Let $p\in L_2(\A)$ be a point of multiplicity $m_p$. In suitable local coordinates $u,v$ centered at $p$, a reduced local equation of $X_\A$ is, up to multiplication by a unit,
\[
g_p(u,v)
=
\lambda_1(u,v)\cdots\lambda_{m_p}(u,v),
\]
where the $\lambda_i$ are pairwise nonproportional linear forms. Thus $(X_\A,p)$ is an ordinary $m_p$-fold point with $m_p$ smooth branches having distinct tangent directions.

The local Milnor and Tjurina numbers are
\[
\mu_p(\A)
=
\dim_k
\frac{k[[u,v]]}{((g_p)_u,(g_p)_v)},
\qquad
\tau_p(\A)
=
\dim_k
\frac{k[[u,v]]}{(g_p,(g_p)_u,(g_p)_v)}.
\]
Since $g_p$ is reduced, $(g_p)_u$ and $(g_p)_v$ have no common nonconstant factor. They therefore form a homogeneous regular sequence of degree $m_p-1$ in $k[u,v]$. The corresponding quotient is Artinian, so completion preserves its length. Hence
\[
\mu_p(\A)
=
\dim_k
\frac{k[u,v]}{((g_p)_u,(g_p)_v)}
=
(m_p-1)^2.
\]
On the other hand, Euler's identity gives $m_pg_p=u(g_p)_u+v(g_p)_v,$ so $g_p\in((g_p)_u,(g_p)_v)$. It follows that $\mu_p(\A)=\tau_p(\A)=(m_p-1)^2.$ In particular, every singularity of a line arrangement is quasi-homogeneous; see \cite{Saito} for the terminology. Therefore
\[
\mu(\A)=\tau(\A)
=
\sum_{p\in L_2(\A)}(m_p-1)^2
=
\sum_{j\geq2}n_j(j-1)^2.
\]
Thus the global Milnor and Tjurina numbers are determined by the weak combinatorics.

We shall also use the integer
\[
b_2(\A)
:=
\sum_{p\in L_2(\A)}(m_p-1)
=
\sum_{j\geq2}n_j(j-1).
\]
Since every pair of lines meets at exactly one point,
\[
\sum_{p\in L_2(\A)}\binom{m_p}{2}
=
\binom{n+1}{2}.
\]
Together with $(m_p-1)^2=2\binom{m_p}{2}-(m_p-1)$, this gives
\begin{equation}
\label{eq:tjurina-b2-line-arrangement}
\tau(\A)=n(n+1)-b_2(\A).
\end{equation}
Thus both $b_2(\A)$ and $\tau(\A)$ are determined by the weak combinatorics. For a generic arrangement, $b_2(\A)=\tau(\A)=\binom{n+1}{2}$, whereas for a pencil, $b_2(\A)=n$ and $\tau(\A)=n^2$.

The construction of a syzygy in $\syz(J_\A)$ associated with an intersection point is classical; see \cite[Section~2.2]{Dimca-pencils}, \cite[Lemma~2.2]{Dimca-minimal-Tjurina}, and \cite[Theorem~1.3]{Dimca-local-derivations}. We record the form needed below and show that syzygies associated with distinct intersection points cannot be proportional.

\begin{Proposition}
\label{prop:syzygies-associated-with-points}
Let $\A$ be an arrangement of $n+1$ lines and let $p\in L_2(\A)$ have multiplicity $m_p$. Then there exists a syzygy $\rho_p\in\syz(J_\A)_{n+1-m_p}$ whose coordinates have no nonconstant common factor. Consequently,
\begin{equation}
\label{eq:initial-degree-multiplicity-bound}
e_\A\leq n+1-m_p
\end{equation}
for every $p\in L_2(\A)$, and hence
\begin{equation}
\label{eq:initial-degree-maximal-multiplicity}
e_\A\leq n+1-m(\A).
\end{equation}
Moreover, if $p,q\in L_2(\A)$ are distinct, then $\rho_p$ and $\rho_q$ are not nonzero scalar multiples.
\end{Proposition}
\begin{proof}
After a linear change of coordinates, we may assume that $p=(1:0:0)$ and write
\[
f_\A=g_p(y,z)h_p(x,y,z), \qquad \deg(g_p)=m_p,\qquad \deg(h_p)=n+1-m_p,
\]
where $g_p$ is the product of the equations of the lines through $p$. Euler's identities give
\[
yg_{p,y}+zg_{p,z}=m_pg_p, \qquad xh_{p,x}+yh_{p,y}+zh_{p,z}=(n+1-m_p)h_p.
\]
A direct computation then shows that
\[
\rho_p
:= (xh_{p,x}-(n+1)h_p,\;yh_{p,x},\;zh_{p,x}) \in\syz(J_\A)_{n+1-m_p}.
\]
This is the syzygy associated with $p$ appearing in \cite[Theorem~1.2, formula~(2.4)]{Dimca-pencils}.

We first show that the coordinates of $\rho_p$ have no nonconstant common factor. Suppose that an irreducible form $D$ divides its three coordinates. Since $D$ divides both $yh_{p,x}$ and $zh_{p,x}$ and $\gcd(y,z)=1$, one has $D\mid h_{p,x}$. The first coordinate then gives $D\mid h_p$. If $h_p=1$, this is already a contradiction. Otherwise, $D=\ell$ is a linear factor of $h_p$. Since the line $V(\ell)$ does not pass through $p$, one has $\ell_x\neq0$, while
\[
h_{p,x} \equiv \ell_x\frac{h_p}{\ell} \pmod{\ell}.
\]
As $h_p$ is reduced, $\ell\nmid h_p/\ell$, and hence $\ell\nmid h_{p,x}$, a contradiction. Therefore the coordinates of $\rho_p$ have no nonconstant common factor, and \eqref{eq:initial-degree-multiplicity-bound} and \eqref{eq:initial-degree-maximal-multiplicity} follow.

It remains to compare the syzygies associated with distinct points. Write $p=(a:b:c)$ and let
\[
\partial_p
=
a\partial_x+b\partial_y+c\partial_z.
\]
If $h_p$ denotes the product of the equations of the lines not passing through $p$, the same construction in arbitrary coordinates gives $\rho_p=(A_p,B_p,C_p),$ where
\[
\begin{aligned}
xB_p-yA_p&=(n+1)h_p(ay-bx),\\
xC_p-zA_p&=(n+1)h_p(az-cx),\\
yC_p-zB_p&=(n+1)h_p(bz-cy).
\end{aligned}
\]
Consider the ideal of $2\times2$ minors
\[
I_2
\begin{pmatrix}
x&y&z\\ A_p&B_p&C_p
\end{pmatrix}.
\]
Since $I(p)=(ay-bx,\;az-cx,\;bz-cy)$ is the homogeneous ideal of $p$, we obtain
\[
I_2
\begin{pmatrix}
x&y&z\\ A_p&B_p&C_p
\end{pmatrix}
=
h_pI(p).
\]

Now suppose that $p,q\in L_2(\A)$ and that $\rho_p=\lambda\rho_q$ for some $\lambda\in k^*$. The corresponding ideals of $2\times2$ minors are equal, and hence $h_pI(p)=h_qI(q).$ Since $I(p)$ and $I(q)$ are height-two linear prime ideals, the greatest common divisor of the generators of $h_pI(p)$ is $h_p$, up to a nonzero scalar, and similarly for $h_qI(q)$. It follows that $h_p$ and $h_q$ are proportional. Cancelling this common factor from the preceding equality gives $I(p)=I(q),$ and therefore $p=q$. Thus syzygies associated with distinct intersection points cannot be nonzero scalar multiples.
\end{proof}

Let $\mu_L:L(\A)\to\ZZ$ be the M\"obius function, characterized by
\[
\mu_L(k^3)=1,
\qquad
\sum_{Y\leq X}\mu_L(Y)=0
\quad\text{for }X\neq k^3.
\]
The \emph{characteristic polynomial} of $\A$ is
\[
\chi_\A(t)
:=
\sum_{X\in L(\A)}\mu_L(X)t^{\dim X}.
\]
Each hyperplane $H_i\in c\A$ has M\"obius value $\mu_L(H_i)=-1$. Let $p\in L_2(\A)$ have multiplicity $m_p$, and let $X_p\subseteq k^3$ be the corresponding one-dimensional flat. Since exactly $m_p$ hyperplanes contain $X_p$, the defining recursion gives
\[
\mu_L(X_p)
=
-\left(
\mu_L(k^3)+\sum_{H_i\supset X_p}\mu_L(H_i)
\right)
=
m_p-1.
\]
Hence the contributions of the flats of dimensions three, two, and one to $\chi_\A(t)$ are, respectively, $t^3,\, -(n+1)t^2,\, b_2(\A)t$. Set
\[
b_3(\A)
:= -\sum_{\substack{X\in L(\A)\\ \operatorname{rk}(X)=3}}
\mu_L(X).
\]
If $\A$ is essential, the unique rank-three flat is $\{0\}$, whereas for a pencil there is no rank-three flat. Thus
\[
\chi_\A(t)
=
t^3-(n+1)t^2+b_2(\A)t-b_3(\A).
\]
Since $c\A$ is central and nonempty, $\chi_\A(1)=0$, and therefore $b_3(\A)=b_2(\A)-n.$ Consequently,
\[
\chi_\A(t)
=
t^3-(n+1)t^2+b_2(\A)t-\bigl(b_2(\A)-n\bigr).
\]

The \emph{reduced characteristic polynomial} is defined by
\[
\overline{\chi}_\A(t)
:=
\frac{\chi_\A(t)}{t-1}.
\]
Hence
\begin{equation}
\label{eq:reduced-characteristic-polynomial}
\overline{\chi}_\A(t)
=
t^2-nt+b_2(\A)-n.
\end{equation}
Both $\chi_\A(t)$ and $\overline{\chi}_\A(t)$ are determined by the intersection lattice.

Finally, if $\A$ is free with projective exponents $(a_1,a_2)$, Terao's factorization theorem \cite[Theorem~4.137]{Orlik-Terao} gives
\[
\chi_\A(t)
=
(t-1)(t-a_1)(t-a_2), \qquad
\overline{\chi}_\A(t)
=
(t-a_1)(t-a_2).
\]

\section{The Bourbaki degree of line arrangements}
\label{sec:bourbaki-degree-line-arrangements}

Let $\A=\{L_1,\ldots,L_{n+1}\}$ be an essential arrangement of $n+1\geq3$ distinct lines in $\PP^2$, with defining polynomial $f_\A=\ell_1\cdots\ell_{n+1}$. We write $J_\A=J_{f_\A}$ and $e=e_\A:=\indeg\bigl(\syz(J_\A)\bigr)$. Since $\A$ is essential, $e\geq1$. Choose a minimal homogeneous generator $\nu\in\syz(J_\A)_e$. Since $\deg(f_\A)=n+1$, the Bourbaki construction of the preceding section gives
\[
0\longrightarrow R(-e)\xrightarrow{\ \nu\ }\syz(J_\A)
\xrightarrow{\ \pi_\nu\ }I_\nu(e-n)\longrightarrow0.
\]
If $\A$ is free, then $I_\nu=R$ and $\Bour(\A)=0$. Otherwise, $I_\nu$ is a proper saturated perfect ideal of height two and $\Bour(\A)=\deg(R/I_\nu)$.

Our aim in this section is to study the Bourbaki degree in terms of the geometry and combinatorics of the arrangement.
\subsection{The characteristic-polynomial formula}
\label{subsec:characteristic-polynomial-formula}

For an arrangement of $n+1$ lines, the general formula for the Bourbaki degree becomes
\begin{equation}
\label{eq:bourbaki-tjurina-arrangement}
\Bour(\A)=n^2+e(e-n)-\tau(\A).
\end{equation}
Together with the formula for the global Tjurina number, this gives the following relation with the reduced characteristic polynomial.

\begin{Theorem}
\label{thm:bourbaki-characteristic-polynomial}
Let $\A$ be an essential arrangement of $n+1$ lines in $\PP^2$ and let $e=e_\A$. Then
\[
\Bour(\A)=\overline{\chi}_\A(e).
\]
\end{Theorem}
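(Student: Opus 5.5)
The plan is to combine the two explicit formulas already established: the Bourbaki-degree formula \eqref{eq:bourbaki-tjurina-arrangement}, which specializes \eqref{eq:bourbaki-degree-formula} to $d=n$, and the expression \eqref{eq:tjurina-b2-line-arrangement} for the global Tjurina number in terms of $b_2(\A)$. After substituting the second into the first, it remains to match the result with \eqref{eq:reduced-characteristic-polynomial}.

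First I will check that the hypotheses of \eqref{eq:bourbaki-tjurina-arrangement} hold. Since $\A$ is essential, $e\geq1$, so we are outside the cone case. Since $n+1\geq3$ and any two lines meet, $X_\A$ is singular. Thus $\tau(\A)$ is the genuine global Tjurina number, and it equals $\deg(R/J_\A)$.

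In the free case, $\Bour(\A)=0$ by definition. Formula \eqref{eq:bourbaki-degree-formula} is stated to hold in general, so it still applies. As a consistency check, one can use Terao's factorization: here $\overline{\chi}_\A(e)=(e-a_1)(e-a_2)=0$ because $a_1=e$. So no separate argument is needed, but I will note this.

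Now substitute $\tau(\A)=n(n+1)-b_2(\A)$ into $\Bour(\A)=n^2+e^2-en-\tau(\A)$. This gives
\[
\Bour(\A)=n^2+e^2-en-n^2-n+b_2(\A)=e^2-ne+b_2(\A)-n.
\]
By \eqref{eq:reduced-characteristic-polynomial}, the right-hand side is exactly $\overline{\chi}_\A(e)$.

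There is no real obstacle, since everything reduces to bookkeeping. The only delicate point is confirming that \eqref{eq:bourbaki-degree-formula}, quoted from \cite{JNSBourbaki}, covers the free case with the convention $\Bour=0$. This is consistent because, in the free case, $\tau=d^2-a_1a_2=d^2-e(d-e)$.
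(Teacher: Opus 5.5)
Your proposal is correct and follows the paper's proof exactly: substitute $\tau(\A)=n(n+1)-b_2(\A)$ into $\Bour(\A)=n^2+e(e-n)-\tau(\A)$ and compare with $\overline{\chi}_\A(t)=t^2-nt+b_2(\A)-n$. Your extra checks (that $X_\A$ is singular, and that the free case is consistent via $\tau=d^2-e(d-e)$) are harmless sanity checks that the paper leaves implicit.
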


\begin{proof}
By \eqref{eq:bourbaki-tjurina-arrangement} and \eqref{eq:tjurina-b2-line-arrangement},
\[
\Bour(\A)
=n^2+e(e-n)-n(n+1)+b_2(\A)
=e^2-ne+b_2(\A)-n.
\]
On the other hand, $\overline{\chi}_\A(t)=t^2-nt+b_2(\A)-n.$ Evaluating at $t=e$ gives the result.
\end{proof}

Thus the Bourbaki degree is the value of a combinatorially determined polynomial at the generally noncombinatorial integer $e_\A$. In particular, $\A$ is free if and only if $\overline{\chi}_\A(e)=0$, and nearly free if and only if $\overline{\chi}_\A(e)=1$. Since $\overline{\chi}_\A(t)-\overline{\chi}_\A(e)
=(t-e)(t-n+e)$, these conditions are equivalent, respectively, to
\[
\overline{\chi}_\A(t)=(t-e)(t-n+e)
\quad\text{and}\quad
\overline{\chi}_\A(t)=(t-e)(t-n+e)+1.
\]

\begin{Example}
\label{ex:generic-arrangements-bourbaki}
Let $\A$ be a generic arrangement of $n+1$ lines, with $n\geq3$. Then every singular point of $\A$ is a double point, so $b_2(\A)=\binom{n+1}{2}$ and
\[
\overline{\chi}_\A(t)=t^2-nt+\binom n2.
\]
By~\cite[Example~2.2(i)]{DimcaSernesi}, one has $e_\A=n-1$. Hence
\[
\Bour(\A)
=\overline{\chi}_\A(n-1)
=\binom{n-1}{2}.
\]

By \cite{YuzvinskyGeneric}, taking into account the grading shift between logarithmic derivations and $\syz(J_\A)$, one has
\[
0\longrightarrow R(-n)^{n-2}
\longrightarrow R(-(n-1))^n \longrightarrow\syz(J_\A)\longrightarrow0.
\]
Thus, for every $0\neq\nu\in\syz(J_\A)_{n-1}$, by~\eqref{HBB} the associated Bourbaki ideal has the minimal resolution
\[
0\longrightarrow R(-(n-1))^{n-2}
\longrightarrow R(-(n-2))^{n-1} \longrightarrow I_\nu\longrightarrow0.
\]
In particular, every Bourbaki scheme associated with a minimal syzygy of $J_\A$ has degree $\binom{n-1}{2}$ and the same Hilbert function and graded Betti numbers.
\end{Example}

\subsection{Bounds for line arrangements}
\label{subsec:bounds-line-arrangements}

For a singular reduced plane curve $X$, the general bound $\Bour(X)\leq e^2$ was proved in \cite[Theorem~2.10]{JNSBourbaki}. For line arrangements, we obtain a sharper bound and then refine it using the largest intersection multiplicities.

For each $p\in L_2(\A)$, let $\rho_p\in\syz(J_\A)_{n+1-m_p}$ be the syzygy associated with $p$ in Proposition~\ref{prop:syzygies-associated-with-points}. Recall that the syzygies associated with distinct intersection points are not nonzero scalar multiples.

We first prove the regularity estimate needed for these bounds. We recall the basic facts on Castelnuovo--Mumford regularity that will be used below. If a finitely generated graded $R$-module $M$ has a minimal graded free resolution
\[
\cdots\longrightarrow\bigoplus_jR(-b_{ij}) \longrightarrow\cdots\longrightarrow \bigoplus_jR(-b_{0j})\longrightarrow M\longrightarrow0,
\]
then $\reg(M)=\max_{i,j}\{b_{ij}-i\}.$ In particular, $\reg(M(a))=\reg(M)-a$. Moreover, for an exact sequence $0\to M'\to M\to M''\to0$ one has $\reg(M'')\leq\max\{\reg(M),\reg(M')-1\}.$ If $I\subseteq R$ is a nonzero proper homogeneous ideal, then $\reg(R/I)=\reg(I)-1$.

\begin{Lemma}
\label{lem:regularity-bourbaki-ideal}
Assume that $\A$ is not free. Let $0\neq\nu\in\syz(J_\A)_e$ and let $I_\nu$ be the associated Bourbaki ideal. Then
\[
\reg(I_\nu)\leq e-1.
\]
Consequently,
\begin{equation}
\label{eq:bourbaki-postulation-degrees}
\begin{aligned}
\Bour(\A)
&=\HF_{R/I_\nu}(e-2)
=\binom e2-\dim_k(I_\nu)_{e-2}\\
&=\HF_{R/I_\nu}(e-1)
=\binom{e+1}{2}-\dim_k(I_\nu)_{e-1}.
\end{aligned}
\end{equation}
\end{Lemma}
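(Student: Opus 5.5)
The plan is to read the regularity of $I_\nu$ off its Hilbert--Burch resolution \eqref{HBB}. That resolution is obtained from a minimal free resolution of $\syz(J_\A)$ by deleting the summand $R(-e)$ corresponding to $\nu$ and shifting by $n-e$. So a bound on $\reg(\syz(J_\A))$ should transfer directly to $I_\nu$.

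\textbf{Reduction to a regularity bound for $\syz(J_\A)$.} Write a minimal graded free resolution
\[
0\to\bigoplus_k R(-b_k)\to R(-e)\oplus\bigoplus_j R(-a_j)\to\syz(J_\A)\to0,
\]
with the chosen minimal generator $\nu$ in degree $e$. By \eqref{HBB}, and since $\deg f_\A=n+1$ so that $d=n$, the ideal $I_\nu$ has minimal resolution
\[
0\to\bigoplus_k R(-(b_k-n+e))\to\bigoplus_j R(-(a_j-n+e))\to I_\nu\to0.
\]
Hence
\[
\reg(I_\nu)=\max_{j,k}\{a_j,\,b_k-1\}-(n-e)\leq \reg(\syz(J_\A))-(n-e).
\]
It therefore suffices to prove
\[
\reg(\syz(J_\A))\leq n-1=|\A|-2.
\]

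\textbf{The key input (the main obstacle).} The bound $\reg(\syz(J_\A))\leq n-1$ is the genuinely nontrivial step. I would invoke Schenck's theorem that the module of logarithmic derivations $D_0(\A)\simeq\syz(J_\A)$ of an arrangement of $n+1$ lines in $\PP^2$ satisfies $\reg D_0(\A)\leq n-1$. Equivalently, $\reg(J_\A)\leq 2n-2$, because $\syz(J_\A)$ is the first syzygy module of $J_\A(n)$.

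If I wanted to avoid citing it, the alternative is Schenck's argument by induction on the number of lines. That argument uses the addition--deletion exact sequence of derivation modules and the restriction to a line. The base cases, generic arrangements and near-pencils, are covered by Example~\ref{ex:generic-arrangements-bourbaki}, where the generators lie in degree $n-1$ and the relations in degree $n$. I would first check that the grading shift between $D_0(\A)$ and $\syz(J_\A)$ matches the normalization used here; this is consistent with Example~\ref{ex:generic-arrangements-bourbaki}, where the regularity is exactly $n-1$.

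\textbf{Consequences.} Since $I_\nu$ is proper, $\reg(R/I_\nu)=\reg(I_\nu)-1\leq e-2$. The ring $R/I_\nu$ is Cohen--Macaulay of dimension one, because $I_\nu$ is perfect of height two. For such a ring the $a$-invariant equals $\reg(R/I_\nu)-1$, so $\HF_{R/I_\nu}(q)$ agrees with the Hilbert polynomial for all $q\geq\reg(R/I_\nu)$, in particular for $q=e-2$ and $q=e-1$. The Hilbert polynomial is the constant $\deg(R/I_\nu)=\Bour(\A)$. Using $\dim_k R_{e-2}=\binom e2$ and $\dim_k R_{e-1}=\binom{e+1}2$ then gives the two displayed expressions in \eqref{eq:bourbaki-postulation-degrees}.
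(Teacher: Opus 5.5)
Your proof is correct and follows the paper's argument: both rest on Schenck's bound $\reg(\syz(J_\A))\leq n-1$ and transfer it to $I_\nu$ through the Bourbaki construction, followed by the same Cohen--Macaulay Hilbert-function step. The only difference is cosmetic: you read the regularity off the Hilbert--Burch resolution \eqref{HBB}, while the paper applies the regularity inequality to the Bourbaki short exact sequence.
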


\begin{proof}
In the notation of \cite{Schenck}, $D_0\simeq\syz(J_\A)$. By \cite[Corollary~3.5 and the discussion following it]{Schenck}, $\syz(J_\A)$ has a minimal graded free resolution
\[
0\longrightarrow\bigoplus_{j=1}^{q-2}R(-\beta_j)
\longrightarrow\bigoplus_{i=1}^{q}R(-\alpha_i) \longrightarrow\syz(J_\A)\longrightarrow0,
\]
where $\alpha_i\leq n-1$ and $\beta_j\leq n$. Hence $\reg(\syz(J_\A))\leq n-1$.

The Bourbaki sequence is
\[
0\longrightarrow R(-e)\longrightarrow\syz(J_\A)
\longrightarrow I_\nu(e-n)\longrightarrow0.
\]
Since every intersection point has multiplicity at least two, \eqref{eq:initial-degree-multiplicity-bound} gives $e\leq n-1$. Therefore
\[
\reg\bigl(I_\nu(e-n)\bigr)
\leq\max\{\reg(\syz(J_\A)),e-1\}\leq n-1.
\]
Since $\reg(I_\nu(e-n))=\reg(I_\nu)-e+n$, it follows that $\reg(I_\nu)\leq e-1$.

Since $I_\nu$ is a proper perfect ideal of height two, $R/I_\nu$ is a one-dimensional Cohen--Macaulay ring and $\reg(R/I_\nu)=\reg(I_\nu)-1\leq e-2$. Thus its Hilbert function agrees with its constant Hilbert polynomial, equal to $\deg(R/I_\nu)=\Bour(\A)$, in degrees $e-2$ and $e-1$. Hence
\[
\Bour(\A)
=\dim_kR_{e-2}-\dim_k(I_\nu)_{e-2}
=\dim_kR_{e-1}-\dim_k(I_\nu)_{e-1},
\]
which gives \eqref{eq:bourbaki-postulation-degrees}.
\end{proof}

For a nonfree arrangement, we also use its \emph{type} $t(\A):=\indeg(I_\nu).$ This invariant was introduced in \cite[Definition~1.2 and formula~(1.8)]{Abe-Dimca-Pokora-hierarchy}. It is independent of the choice of $\nu$ and records the first nonzero degree of the Bourbaki ideal. In particular, $t(\A)=1$ characterizes plus-one generated arrangements. We will use the type to relate the structure of $I_\nu$ to the Bourbaki degree and the intersection multiplicities of $\A$.

\begin{Theorem}
\label{thm:sharp-triangular-bourbaki-bound}
Let $\A$ be an essential arrangement and set $e=e_\A$. Then
\begin{equation}
\label{eq:sharp-triangular-bourbaki-bound}
\Bour(\A)\leq\binom e2.
\end{equation}
The bound is sharp for every $e\geq1$. If $\A$ is not free, then $1\leq t(\A)\leq e-1$, and $\Bour(\A)=\binom e2$ if and only if $t(\A)=e-1$. In this case, every Bourbaki ideal associated with a minimal syzygy $\nu\in\syz(J_\A)_e$ has the minimal resolution
\begin{equation}
\label{eq:extremal-bourbaki-resolution}
0\longrightarrow R(-e)^{e-1}
\longrightarrow R(-(e-1))^e \longrightarrow I_\nu\longrightarrow0.
\end{equation}
\end{Theorem}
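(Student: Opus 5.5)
In the free case $\Bour(\A)=0$ and there is nothing to prove. Otherwise, we work with Lemma~\ref{lem:regularity-bourbaki-ideal}. From \eqref{eq:bourbaki-postulation-degrees},
\[
\Bour(\A)=\binom e2-\dim_k(I_\nu)_{e-2},
\]
and since $\dim_k(I_\nu)_{e-2}\geq0$ this gives \eqref{eq:sharp-triangular-bourbaki-bound}. Equality holds exactly when $(I_\nu)_{e-2}=0$, that is, when $t(\A)\geq e-1$.

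For the bounds on the type, $t(\A)\geq1$ because $I_\nu$ is proper of height two. For $t(\A)\leq e-1$ we use the second expression in \eqref{eq:bourbaki-postulation-degrees}. The first expression gives $\Bour(\A)\leq\binom e2$, and the second then gives
\[
\dim_k(I_\nu)_{e-1}=\binom{e+1}2-\Bour(\A)\geq\binom{e+1}2-\binom e2=e>0.
\]
Hence $(I_\nu)_{e-1}\neq0$ and $t(\A)\leq e-1$. Combining this with the first paragraph, $\Bour(\A)=\binom e2$ if and only if $t(\A)=e-1$.

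Next, the resolution in the equality case. Assume $t(\A)=e-1$. Then $\dim_k(I_\nu)_{e-1}=e$ exactly, so $I_\nu$ has precisely $e$ minimal generators of degree $e-1$. Since $\reg(I_\nu)\leq e-1$, $I_\nu$ is generated in degrees $\leq e-1$, and because $\indeg(I_\nu)=e-1$ these $e$ forms generate $I_\nu$. The ideal is perfect of height two, so by Hilbert--Burch the resolution has the form $0\to F_1\to R(-(e-1))^e\to I_\nu\to0$ with $\rank F_1=e-1$. The regularity bound forces every shift in $F_1$ to be $\leq e$. Minimality forces every shift to be $\geq e$, since the Hilbert--Burch matrix has no nonzero constant entries. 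Therefore $F_1=R(-e)^{e-1}$, which is \eqref{eq:extremal-bourbaki-resolution}. As a consistency check, the degree computed from this linear Hilbert--Burch resolution is $\binom e2$. Since $\Bour(\A)$ and $t(\A)$ are independent of the choice of $\nu$, the argument applies to every minimal syzygy $\nu$.

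Sharpness is the step I expect to need the most care, particularly at small $e$. For $e\geq2$, Example~\ref{ex:generic-arrangements-bourbaki} gives a generic arrangement of $e+2$ lines, with $n=e+1\geq3$, $e_\A=e$ and $\Bour(\A)=\binom{e}{2}$. For $e=1$ (and likewise $e=2$ in general position) equality can be realized by any free arrangement: for example, a near pencil, consisting of $n$ concurrent lines plus one more line, has exponents $(1,n-1)$ and satisfies $\Bour=0=\binom12$. For $e=2$ the generic arrangement of four lines has $\Bour=1=\binom22$, which covers that case directly. The remaining bookkeeping is to confirm that Example~\ref{ex:generic-arrangements-bourbaki} covers every $e\geq2$, which it does because $n\geq3$ there.
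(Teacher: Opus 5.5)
Your proof is correct and follows the paper's argument essentially step for step. It uses the postulation formula from Lemma~\ref{lem:regularity-bourbaki-ideal}, then Hilbert--Burch with regularity and minimality forcing the linear resolution, and a near pencil (e.g.\ the triangle) and generic arrangements of $e+2$ lines for sharpness. The only cosmetic difference is that you obtain $t(\A)\le e-1$ from $\dim_k(I_\nu)_{e-1}\ge e$, whereas the paper reads it off directly from $\reg(I_\nu)\le e-1$.

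Do delete the parenthetical suggesting that a free arrangement realizes equality when $e=2$. In that case $\Bour(\A)=0<1=\binom22$, and it is your generic four-line example that covers $e=2$.
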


\begin{proof}
The assertion is clear if $\A$ is free. Assume that $\A$ is not free. By \eqref{eq:bourbaki-postulation-degrees},
\[
\Bour(\A)
=\binom e2-\dim_k(I_\nu)_{e-2}
\leq\binom e2.
\]
Moreover, $t(\A)\geq1$, while $\reg(I_\nu)\leq e-1$ implies $t(\A)\leq e-1$. Hence
\[
\Bour(\A)=\binom e2
\quad\Longleftrightarrow\quad (I_\nu)_{e-2}=0 \quad\Longleftrightarrow\quad t(\A)=e-1.
\]

Assume that these conditions hold. Since $\indeg(I_\nu)=e-1$ and $\reg(I_\nu)\leq e-1$, the ideal $I_\nu$ is generated in degree $e-1$. Moreover,
\[
\dim_k(I_\nu)_{e-1}
=\binom{e+1}{2}-\Bour(\A)
=e.
\]
Thus $I_\nu$ has exactly $e$ minimal generators, all of degree $e-1$. By the Hilbert--Burch theorem, its minimal resolution has the form
\[
0\longrightarrow\bigoplus_{j=1}^{e-1}R(-b_j)
\longrightarrow R(-(e-1))^e \longrightarrow I_\nu\longrightarrow0.
\]
Minimality gives $b_j\geq e$, while $\reg(I_\nu)\leq e-1$ gives $b_j\leq e$. Hence $b_j=e$ for every $j$, proving \eqref{eq:extremal-bourbaki-resolution}.

Finally, equality is attained for every $e\geq1$: by a triangle when
$e=1$, and, for $e\geq2$, by a generic arrangement of $e+2$ lines;
see Example~\ref{ex:generic-arrangements-bourbaki}.
\end{proof}

We set $\Delta(\A):=\binom{e_\A}{2}-\Bour(\A).$ By Theorem~\ref{thm:sharp-triangular-bourbaki-bound}, $\Delta(\A)\geq0$. We will say that $\A$ is \emph{extremal} if $\Delta(\A)=0$, or equivalently, if $\Bour(\A)=\binom{e_\A}{2}.$

We next refine the triangular bound using the syzygies associated with multiple points. A related construction using local derivations was considered in \cite[Proposition~5.1]{Dimca-local-derivations}. Together with Lemma~\ref{lem:regularity-bourbaki-ideal}, this gives bounds involving the two largest intersection multiplicities.

Let $m_1(\A)\geq m_2(\A)$ denote the two largest intersection multiplicities of $\A$. Thus there are distinct points $p_1,p_2\in L_2(\A)$ such that $m_{p_i}=m_i(\A)$ for $i=1,2$.
\begin{Theorem}
\label{thm:upper-bound-line-arrangements}
Let $\A$ be an essential nonfree arrangement of $n+1$ lines and set $e=e_\A$. Then
\[
\Bour(\A)\leq
\binom e2-\binom{m_2(\A)-1}{2},
\]
and
\begin{equation}
\label{eq:type-second-multiplicity}
t(\A)\leq e-m_2(\A)+1.
\end{equation}
If $e\neq n+1-m_1(\A)$, then
\[
\Bour(\A)\leq
\binom e2-\binom{m_1(\A)-1}{2},
\]
and
\begin{equation}
\label{eq:type-maximal-multiplicity}
t(\A)\leq e-m_1(\A)+1.
\end{equation}
\end{Theorem}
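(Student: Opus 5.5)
We argue through the Bourbaki map $\pi_\nu:\syz(J_\A)\to I_\nu(e-n)$ and the syzygies $\rho_p$ of Proposition~\ref{prop:syzygies-associated-with-points}. Fix a minimal syzygy $\nu\in\syz(J_\A)_e$ and the associated Bourbaki ideal $I_\nu$. For a point $p$ of multiplicity $m_p$, the syzygy $\rho_p$ has degree $n+1-m_p$. Its image $\pi_\nu(\rho_p)$ therefore lies in $(I_\nu)_{n+1-m_p-(n-e)}=(I_\nu)_{e+1-m_p}$. It is nonzero exactly when $\rho_p\notin R\nu$.

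\textbf{Step 1: the image of $\rho_p$ is nonzero.} The coordinates of $\rho_p$ have no nonconstant common factor. So if $\rho_p=g\nu$, then $g$ is a constant. In that case $\deg\rho_p=e$, that is, $e=n+1-m_p$, and $\rho_p$ is a nonzero scalar multiple of $\nu$. For two distinct points $p_1,p_2$ this can happen for at most one of them, since $\rho_{p_1},\rho_{p_2}$ are not scalar multiples of each other. Take $p_1,p_2$ with multiplicities $m_1\ge m_2$. If $\rho_{p_2}\in R\nu$, then $e=n+1-m_2\ge n+1-m_1$. By \eqref{eq:initial-degree-multiplicity-bound} also $e\le n+1-m_1$, so $m_1=m_2$. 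In that case $\rho_{p_1}\notin R\nu$, and $p_1$ has the same multiplicity, so we may swap the two points. Either way we obtain a point of multiplicity $m_2$ whose syzygy maps to a nonzero element $g\in (I_\nu)_{e+1-m_2}$. This gives \eqref{eq:type-second-multiplicity} directly. If $e\ne n+1-m_1$, then $\rho_{p_1}\notin R\nu$ outright, and we get a nonzero element of $(I_\nu)_{e+1-m_1}$ and hence \eqref{eq:type-maximal-multiplicity}.

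\textbf{Step 2: from a low-degree element to the degree bound.} Let $0\neq g\in (I_\nu)_{s}$ with $s=e+1-m$, where $m=m_2$ or $m_1$ as in Step 1. Then $(I_\nu)_{e-2}\supseteq g\,R_{e-2-s}=g\,R_{m-3}$. Multiplication by $g$ is injective on $R$, so
\[
\dim_k(I_\nu)_{e-2}\ge \dim_k R_{m-3}=\binom{m-1}{2}.
\]
This holds trivially when $m\le 2$, where both sides are $0$ under the usual conventions. Inserting this into \eqref{eq:bourbaki-postulation-degrees} from Lemma~\ref{lem:regularity-bourbaki-ideal} gives
\[
\Bour(\A)=\binom e2-\dim_k(I_\nu)_{e-2}\le\binom e2-\binom{m-1}{2},
\]
which yields both degree bounds.

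\textbf{Main obstacle.} The delicate point is Step 1: ruling out $\pi_\nu(\rho_p)=0$. This relies on the coprimality of the coordinates of $\rho_p$, which forces any proportionality with $\nu$ to be by a constant. It also relies on the non-proportionality of syzygies attached to distinct points, together with the case analysis when $m_1=m_2$. The second claim needs the hypothesis $e\neq n+1-m_1$ precisely to exclude $\rho_{p_1}$ being a multiple of $\nu$.
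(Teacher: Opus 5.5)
Your proof is correct and follows the paper's argument. Coprimality of the coordinates of $\rho_p$, together with non-proportionality of $\rho_{p_1},\rho_{p_2}$, gives some $\rho_{p_i}\notin R\nu$; its nonzero image in $(I_\nu)_{e+1-m_{p_i}}$ yields the type bound, and multiplying it into $(I_\nu)_{e-2}$ and applying Lemma~\ref{lem:regularity-bourbaki-ideal} yields the degree bound. Your extra case analysis forcing $m_1=m_2$ is harmless but unnecessary, since the paper simply uses $m_{p_i}\geq m_2(\A)$ for whichever $i$ works.
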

\begin{proof}
Let $\rho_i$ be the syzygy associated with $p_i$, $i=1,2$, and fix $0\neq\nu\in\syz(J_\A)_e$. We first show that at least one of $\rho_1,\rho_2$ does not belong to $R\nu$. Otherwise, $\rho_i=a_i\nu$ for homogeneous forms $a_i\in R$. Since the coordinates of $\rho_i$ have no nonconstant common factor, each $a_i$ must be a nonzero scalar. Hence $\rho_1$ and $\rho_2$ are nonzero scalar multiples, contradicting Proposition~\ref{prop:syzygies-associated-with-points}.

Choose $i\in\{1,2\}$ such that $\rho_i\notin R\nu$. Since $R\nu=\ker(\pi_\nu)$, one has $\pi_\nu(\rho_i)\neq0$. As $\deg(\rho_i)=n+1-m_{p_i}$, the Bourbaki sequence gives a nonzero form $u_i\in(I_\nu)_{e-m_{p_i}+1}.$ Therefore $t(\A)\leq e-m_{p_i}+1\leq e-m_2(\A)+1,$ which proves \eqref{eq:type-second-multiplicity}.

If $m_{p_i}=2$, the first bound for $\Bour(\A)$ follows from Theorem~\ref{thm:sharp-triangular-bourbaki-bound}. Assume that $m_{p_i}\geq3$. Multiplication by $u_i$ gives an injection
\[
R_{m_{p_i}-3}\longrightarrow(I_\nu)_{e-2},
\qquad h\longmapsto u_ih.
\]
Hence
\[
\dim_k(I_\nu)_{e-2} \geq\binom{m_{p_i}-1}{2} \geq\binom{m_2(\A)-1}{2}.
\]
By \eqref{eq:bourbaki-postulation-degrees},
\[
\Bour(\A)
\leq
\binom e2-\binom{m_2(\A)-1}{2}.
\]

Now assume that $e\neq n+1-m_1(\A)$. By \eqref{eq:initial-degree-multiplicity-bound}, $e<n+1-m_1(\A)=\deg(\rho_1)$. If $\rho_1\in R\nu$, then $\rho_1=a\nu$ for a homogeneous form $a$ of positive degree, so $a$ would be a nonconstant common factor of the coordinates of $\rho_1$, a contradiction. Hence $\rho_1\notin R\nu$. Applying the preceding argument to $p_1$ gives
\[
\Bour(\A)
\leq
\binom e2-\binom{m_1(\A)-1}{2}
\]
and \eqref{eq:type-maximal-multiplicity}.
\end{proof}

The behavior of the initial degree with respect to the maximal intersection multiplicity is already well understood. In particular, \cite[Theorem~1.10]{Dimca-local-derivations} gives strong restrictions in terms of $m_1(\A)$. Our estimates provide additional information involving the second largest multiplicity $m_2(\A)$ and, at the same time, control the type and the Bourbaki degree.

\begin{Corollary}
\label{cor:multiplicity-and-type-consequences}
Let $\A$ be an essential nonfree arrangement of $n+1$ lines and set $e=e_\A$. Then
\[
m_2(\A)\leq e.
\]
Moreover:
\begin{enumerate}
\item if $m_2(\A)=e$, then $\A$ is plus-one generated;
\item if $m_2(\A)\geq e-1$, then $t(\A)\leq2$;
\item if $e\neq n+1-m_1(\A)$ and $m_1(\A)=e$, then $\A$ is
plus-one generated.
\end{enumerate}
\end{Corollary}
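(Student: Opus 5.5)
The plan is to read everything off the type bounds in Theorem~\ref{thm:upper-bound-line-arrangements}, together with the facts $1\leq t(\A)\leq e-1$ from Theorem~\ref{thm:sharp-triangular-bourbaki-bound}. Plus-one generation will then follow from the characterization $t(\A)=1$ recalled just before that theorem.

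For the inequality $m_2(\A)\leq e$, we combine \eqref{eq:type-second-multiplicity} with $t(\A)\geq1$. Since $\A$ is not free, $I_\nu$ is a proper nonzero ideal of height two. Hence $(I_\nu)_0=0$ and $t(\A)\geq1$. This gives $1\leq t(\A)\leq e-m_2(\A)+1$, that is, $m_2(\A)\leq e$.

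Item (1) follows from the same chain. If $m_2(\A)=e$, then $t(\A)\leq 1$, so $t(\A)=1$ and $\A$ is plus-one generated. Item (2) is immediate: $m_2(\A)\geq e-1$ gives $t(\A)\leq e-(e-1)+1=2$. For item (3), the hypothesis $e\neq n+1-m_1(\A)$ allows us to use \eqref{eq:type-maximal-multiplicity}. This gives $t(\A)\leq e-m_1(\A)+1$, and with $m_1(\A)=e$ we get $t(\A)\leq1$. Hence $t(\A)=1$ and $\A$ is plus-one generated.

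The only point needing care is the justification that $t(\A)=1$ means plus-one generated. I would argue it directly from the Bourbaki sequence, so as not to rely on the citation alone. If $(I_\nu)_1\neq0$, then $I_\nu$ is a height-two perfect ideal containing a linear form $\ell$. Then $I_\nu/(\ell)$ is a height-one ideal in $R/(\ell)\simeq k[u,v]$, hence principal, say generated by a form $g$. So $I_\nu=(\ell,g)$ is a complete intersection with Hilbert--Burch resolution $0\to R(-1-\deg g)\to R(-1)\oplus R(-\deg g)\to I_\nu\to0$. Pulling this back through \eqref{HBB}, $\syz(J_\A)$ has three minimal generators and a single relation. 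The generators are $\nu$ in degree $e$ and two more in degrees $n-e+1$ and $n-e+\deg g$, and the degree sum condition $d_1+d_2=n+1$ holds. This is exactly the definition of plus-one generated.
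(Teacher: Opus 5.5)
Your argument is correct and is exactly the paper's: combine $t(\A)\geq1$ with the type bounds \eqref{eq:type-second-multiplicity} and \eqref{eq:type-maximal-multiplicity} of Theorem~\ref{thm:upper-bound-line-arrangements}. Your direct check that $t(\A)=1$ forces plus-one generation (the paper simply cites \cite{Abe-Dimca-Pokora-hierarchy}) also works, with one fix: a height-one ideal of $k[u,v]$ need not be principal (e.g.\ $(u^2,uv)$), so you should argue instead that $R/I_\nu$ is Cohen--Macaulay of dimension one, hence $I_\nu/(\ell)$ is unmixed of height one and therefore principal in the UFD $k[u,v]$.
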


\begin{proof}
Since $\A$ is not free, $t(\A)\geq1$. By \eqref{eq:type-second-multiplicity},
\[
1\leq t(\A)\leq e-m_2(\A)+1,
\]
and hence $m_2(\A)\leq e$.

If $m_2(\A)=e$, then \eqref{eq:type-second-multiplicity} gives $t(\A)=1$. Therefore $\A$ is plus-one generated by \cite[Theorem~1.1 and Definition~1.2]{Abe-Dimca-Pokora-hierarchy}.

If $m_2(\A)\geq e-1$, then again \eqref{eq:type-second-multiplicity} gives $t(\A)\leq2$.

Finally, if $e\neq n+1-m_1(\A)$ and $m_1(\A)=e$, then \eqref{eq:type-maximal-multiplicity} gives $t(\A)=1$, and hence $\A$ is plus-one generated.
\end{proof}

Combining the type with the multiplicity bounds gives a two-sided estimate for the Bourbaki degree. The lower extremal case also has a simple Hilbert--Burch resolution.

\begin{Proposition}
\label{prop:type-bourbaki-multiplicity-sandwich}
Let $\A$ be an essential nonfree arrangement of $n+1$ lines and set $e=e_\A$ and $t=t(\A)$. Then
\[
\binom{t+1}{2}
\leq
\Bour(\A)
\leq
\binom e2-\binom{m_2(\A)-1}{2}.
\]
Consequently,
\[
\binom{t+1}{2}+\binom{m_2(\A)-1}{2}
\leq\binom e2.
\]
If $e\neq n+1-m_1(\A)$, then $m_2(\A)$ may be replaced by $m_1(\A)$ in both inequalities.

Moreover,
\[
\Bour(\A)=\binom{t+1}{2}
\]
if and only if every Bourbaki ideal associated with a minimal syzygy $\nu\in\syz(J_\A)_e$ has the minimal resolution
\[
0\longrightarrow R(-t-1)^t
\longrightarrow R(-t)^{t+1} \longrightarrow I_\nu\longrightarrow0.
\]
\end{Proposition}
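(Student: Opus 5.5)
The upper bound is Theorem~\ref{thm:upper-bound-line-arrangements}, so only the lower bound $\binom{t+1}{2}\leq\Bour(\A)$ and the equality case remain. I would use the Hilbert--Burch description of $I_\nu$. Since $\A$ is not free, $I_\nu$ is a proper saturated perfect ideal of height two. Hence $R/I_\nu$ is one-dimensional Cohen--Macaulay, and its Hilbert function is nondecreasing; indeed, modulo a general linear form that is a nonzerodivisor, the first difference of $\HF_{R/I_\nu}$ is the Hilbert function of an Artinian quotient, which is nonnegative. By Lemma~\ref{lem:regularity-bourbaki-ideal}, $\Bour(\A)=\HF_{R/I_\nu}(q)$ for all $q\geq e-2$. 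Since $(I_\nu)_{t-1}=0$, we have $\HF_{R/I_\nu}(t-1)=\binom{t+1}{2}$. Monotonicity then gives $\Bour(\A)\geq\binom{t+1}{2}$ because $t-1\leq e-2$ by Theorem~\ref{thm:sharp-triangular-bourbaki-bound}. Equivalently, the Artinian reduction $A=R/(I_\nu,\ell)$ satisfies $\Bour(\A)=\dim_k A\geq\sum_{j<t}\dim_k k[x,y]_j=\binom{t+1}{2}$.

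Combining the two bounds gives the consequence $\binom{t+1}{2}+\binom{m_2-1}{2}\leq\binom e2$. When $e\neq n+1-m_1(\A)$, the $m_1$ version follows in the same way from the second part of Theorem~\ref{thm:upper-bound-line-arrangements}.

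For the equality case, first suppose $I_\nu$ has the stated resolution. Then $I_\nu$ is the ideal of maximal minors of a $t\times(t+1)$ linear matrix. Its degree is computed from the Hilbert series: $\deg=\tfrac12\bigl((t+1)(t+1)^2\cdot\ldots\bigr)$, or more directly as $\binom{t+1}{2}$, the standard degree of a linear determinantal scheme of this size. Concretely, the numerator $1-(t+1)z^t+tz^{t+1}$ divided by $(1-z)^2$ gives $\sum_{j<t}(j+1)$, which equals $\binom{t+1}{2}$.

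Conversely, suppose $\Bour(\A)=\binom{t+1}{2}$. Then $\dim_k A=\binom{t+1}{2}$ while $A_j=k[x,y]_j$ for $j<t$, so $A$ vanishes in degrees $\geq t$. Thus $(I_\nu,\ell)/(\ell)$ contains all forms of degree $t$ in $k[x,y]$, and $I_\nu$ has exactly $t+1$ minimal generators, all in degree $t$. The Betti numbers of $I_\nu$ agree with those of its reduction modulo a regular linear form, namely the ideal $(x,y)^t$ in $k[x,y]$. The resolution of $(x,y)^t$ is $0\to S(-t-1)^t\to S(-t)^{t+1}$, which yields the claimed resolution. This argument applies to every $\nu$, since $t$ and $\Bour$ do not depend on $\nu$.

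The main care point is justifying the passage to the Artinian reduction. We need $k$ infinite and $R/I_\nu$ Cohen--Macaulay of dimension one, so that a general linear form is regular and Betti numbers are preserved. Both hold here.
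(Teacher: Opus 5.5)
Your proof is correct. The lower bound, via monotonicity of $\HF_{R/I_\nu}$ for the one-dimensional Cohen--Macaulay ring $R/I_\nu$ evaluated at $t-1$, is exactly the paper's argument. So is the implication from the displayed resolution to $\Bour(\A)=\binom{t+1}{2}$. Your Hilbert-series computation with numerator $1-(t+1)z^t+tz^{t+1}$ is the right way to fill in that step. The stray expression $\tfrac12\bigl((t+1)(t+1)^2\cdots\bigr)$ before it is meaningless and should simply be deleted.

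Where you genuinely differ is the implication from $\Bour(\A)=\binom{t+1}{2}$ to the resolution. The paper stays inside $R$. Since the Hilbert function is constant from degree $t-1$ on, $\reg(I_\nu)\leq t$. Together with $\indeg(I_\nu)=t$, this forces $I_\nu$ to be generated by $\dim_k(I_\nu)_t=t+1$ forms of degree $t$. In the Hilbert--Burch resolution, the $t$ syzygy shifts then satisfy $t+1\leq b_j\leq t+1$ by minimality and regularity. This is the same pattern the paper used for the extremal resolution in Theorem~\ref{thm:sharp-triangular-bourbaki-bound}.

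You instead pass to the Artinian reduction modulo a general linear form $\ell$. The dimension count forces $A_j=0$ for $j\geq t$, so the image of $I_\nu$ in $k[x,y]$ is exactly $(x,y)^t$. You then read off the whole resolution from that of $(x,y)^t$. This works because $\ell$ is regular on $R/I_\nu$, so the minimal resolution stays exact and minimal after tensoring with $R/(\ell)$.

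Your route avoids regularity and Hilbert--Burch and identifies all Betti numbers in one stroke. Its cost is the choice of a general $\ell$, which is harmless here since $k$ is infinite and $R/I_\nu$ has positive depth. The paper's route needs no such choice and keeps the argument uniform with the rest of Section~\ref{sec:bourbaki-degree-line-arrangements}.
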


\begin{proof}
Since $(I_\nu)_j=0$ for $j<t$,
\[
\HF_{R/I_\nu}(t-1)
=\dim_kR_{t-1}
=\binom{t+1}{2}.
\]
As $R/I_\nu$ is a one-dimensional Cohen--Macaulay ring, its Hilbert function is nondecreasing and bounded above by its multiplicity. Therefore
\[
\binom{t+1}{2}
\leq\deg(R/I_\nu)
=\Bour(\A).
\]
The upper bounds follow from Theorem~\ref{thm:upper-bound-line-arrangements}.

Assume now that $\Bour(\A)=\binom{t+1}{2}$. Then $\HF_{R/I_\nu}(t-1)=\deg(R/I_\nu),$ so the Hilbert function is constant from degree $t-1$ onward. Hence $\reg(R/I_\nu)\leq t-1$ and $\reg(I_\nu)\leq t$. Since $\indeg(I_\nu)=t$, the ideal $I_\nu$ is generated in degree $t$, and
\[
\dim_k(I_\nu)_t
=\binom{t+2}{2}-\binom{t+1}{2}
=t+1.
\]
Thus Hilbert--Burch gives
\[
0\longrightarrow
\bigoplus_{j=1}^{t}R(-b_j) \longrightarrow R(-t)^{t+1} \longrightarrow I_\nu\longrightarrow0.
\]
Minimality gives $b_j\geq t+1$, while $\reg(I_\nu)\leq t$ gives $b_j\leq t+1$. Hence $b_j=t+1$ for every $j$.

Conversely, the displayed resolution gives $\deg(R/I_\nu)=\binom{t+1}{2}$, and therefore $\Bour(\A)=\binom{t+1}{2}$.
\end{proof}

The lattice class $L(n+1,m)$, consisting of arrangements with one $m$-fold point and all other intersection points double, was studied in \cite[Proposition~4.7]{Dimca-Ibadula-Macinic}. The extremal case of the triangular bound can be characterized precisely in terms of this class.

\begin{Theorem}
\label{thm:extremal-triangular-arrangements}
Let $\A$ be an essential nonfree arrangement of $n+1$ lines and set $e=e_\A$. Then the following conditions are equivalent:
\begin{enumerate}
\item[{\rm (a)}] $\Bour(\A)=\binom e2$;
\item[{\rm (b)}] $t(\A)=e-1$;
\item[{\rm (c)}] $m_2(\A)=2$;
\item[{\rm (d)}] $m_1(\A)=n+1-e$ and $m_2(\A)=2$.
\end{enumerate}
\end{Theorem}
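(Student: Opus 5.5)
The plan is to prove the cycle (a)$\Leftrightarrow$(b)$\Rightarrow$(c)$\Rightarrow$(d)$\Rightarrow$(a).

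\emph{(a)$\Leftrightarrow$(b).} This equivalence is already part of Theorem~\ref{thm:sharp-triangular-bourbaki-bound}, so nothing new is needed.

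\emph{(b)$\Rightarrow$(c).} Combine $t(\A)=e-1$ with \eqref{eq:type-second-multiplicity}. This gives $e-1\leq e-m_2(\A)+1$, hence $m_2(\A)\leq 2$, and therefore $m_2(\A)=2$.

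\emph{(d)$\Rightarrow$(a).} This will be a direct computation with Theorem~\ref{thm:bourbaki-characteristic-polynomial}. Put $m=m_1(\A)$ and $s=n+1-m=e$. Since every other point is double, $\A$ lies in the class $L(n+1,m)$, and
\[
b_2(\A)=(m-1)+\binom{n+1}{2}-\binom m2 .
\]
A cleaner route is the decomposition $\overline{\chi}_\A(t)=(t-m+1)(t-s)+\delta_p(\A)$ at the $m$-fold point $p$. This identity expands at once from \eqref{eq:reduced-characteristic-polynomial} and the identity $\sum_q\binom{m_q}{2}=\binom{n+1}{2}$. The quantity $\delta_p$ counts the double points off the lines through $p$. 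Each of the $\binom s2$ pairs of lines not through $p$ meets in such a point, because it cannot lie on a line through $p$ without being triple. Hence $\delta_p=\binom s2$, and evaluating at $e=s$ gives $\Bour(\A)=\binom s2=\binom e2$. If $m=2$ the arrangement is generic, $e=n-1$ by Example~\ref{ex:generic-arrangements-bourbaki}, and the same formula applies.

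\emph{(c)$\Rightarrow$(d).} This is the main obstacle, since the statement is about the noncombinatorial invariant $e$. Assume $m_2(\A)=2$ and put $m=m_1(\A)$, $s=n+1-m$. We know $e\leq s$ by \eqref{eq:initial-degree-maximal-multiplicity}. If $m=2$ the arrangement is generic and $e=n-1=s$, so we are done. Suppose then that $m\geq3$ and, for contradiction, $e<s$. The computation above gives
\[
\Bour(\A)=(e-m+1)(e-s)+\binom s2 ,
\]
while Theorem~\ref{thm:upper-bound-line-arrangements} gives $\Bour(\A)\leq\binom e2-\binom{m-1}{2}$. We also have $e\geq m$ by Corollary~\ref{cor:multiplicity-and-type-consequences}(3)-type reasoning: the bound $t\leq e-m+1$ together with $t\geq1$. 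Consider the difference of the two sides as a quadratic in $e$ with positive leading coefficient $\tfrac12$. The first thing to try is to check that this difference is positive on the whole range $m\leq e\leq s-1$, by evaluating at the endpoints and at the vertex; that contradicts the upper bound.

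If this numerical check turns out to be delicate, the fallback is to invoke \cite[Proposition~4.7]{Dimca-Ibadula-Macinic}. For the class $L(n+1,m)$ it determines $e_\A=\min\{n+1-m,\,\ldots\}$, which in the nonfree case yields $e=n+1-m$ directly. Either way we obtain (d), and the cycle closes.
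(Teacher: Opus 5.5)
Your proposal is correct and follows essentially the same route as the paper; the only cosmetic difference is that the paper derives (c) from (a) via the $m_2$-refined Bourbaki bound, where you derive it from (b) via the type bound, and these are equivalent since (a)$\Leftrightarrow$(b). The numerical check you leave open in (c)$\Rightarrow$(d) is immediate: with $q=s-e\geq1$ your formula gives $\Bour(\A)-\binom e2=\tfrac{q(2m+q-3)}{2}>0$, which already contradicts the triangular bound of Theorem~\ref{thm:sharp-triangular-bourbaki-bound} exactly as in the paper, so you need neither the endpoint/vertex analysis, nor the refined $m_1$-bound, nor a separate generic case, nor the vague fallback to Dimca--Ibadula--M\u{a}cinic.
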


\begin{proof}
The equivalence of (a) and (b) follows from Theorem~\ref{thm:sharp-triangular-bourbaki-bound}. Moreover, Theorem~\ref{thm:upper-bound-line-arrangements} gives
\[
\Bour(\A)=\binom e2
\quad\Longrightarrow\quad
\binom{m_2(\A)-1}{2}=0,
\]
and hence $m_2(\A)=2$. Thus (a) implies (c).

Assume now that $m_2(\A)=2$ and set $m=m_1(\A)$. Then every intersection point other than a point of multiplicity $m$ is double, so $b_2(\A) =\binom{n+1}{2}-\binom{m-1}{2}.$ We claim that $e=n+1-m$. Suppose otherwise. By \eqref{eq:initial-degree-multiplicity-bound}, $e<n+1-m$, while \eqref{eq:type-maximal-multiplicity} and $t(\A)\geq1$ give $m\leq e$. Set $q=n+1-e-m\geq1$. By Theorem~\ref{thm:bourbaki-characteristic-polynomial},
\[
\begin{aligned}
\Bour(\A)-\binom e2
&= e^2-ne+\binom{n+1}{2} -\binom{m-1}{2}-n-\binom e2\\ &=
\frac{q(2m+q-3)}{2}>0,
\end{aligned}
\]
contradicting Theorem~\ref{thm:sharp-triangular-bourbaki-bound}. Hence $e=n+1-m$, proving (d).

Finally, substituting $m=n+1-e$ in the preceding formula gives $\Bour(\A)=\binom e2$. Thus (d) implies (a), and all four conditions are equivalent.
\end{proof}

Theorem~\ref{thm:extremal-triangular-arrangements} characterizes the case in which there is at most one intersection point of multiplicity greater than two. We next consider the natural case of two such points. In this setting the refined multiplicity bound can again be computed exactly.

A point $p\in L_2(\A)$ is called \emph{modular} if, for every $q\in L_2(\A)$ with $q\neq p$, the line joining $p$ and $q$ belongs to $\A$. An essential line arrangement in $\PP^2$ is \emph{supersolvable} if it has a modular point; see, for instance, \cite{Dimca-Sticlaru-supersolvable}.

\begin{Proposition}
\label{prop:two-multiple-points}
Let $\A$ be an essential arrangement of $n+1$ lines having exactly two intersection points $p_1,p_2$ of multiplicity at least three, and assume $m_1:=m_{p_1}\geq m_2:=m_{p_2}$. All other intersection points are therefore double. Set $r:=n+1-m_1-m_2$. Then $r\geq-1$, and the following hold.

\begin{enumerate}
\item If $r=-1$, then $\A$ is supersolvable, and hence free.

\item If $r\geq0$, then $\A$ is not free and
$e_\A=n+1-m_1=m_2+r$. Moreover, $t(\A)\leq r+1$ and
\[
\Bour(\A)
=
\binom{e_\A}{2}-\binom{m_2-1}{2}
=
\frac{(r+1)(2m_2+r-2)}{2},
\]
\item If $r=0$, then $\A$ is plus-one generated, $\Bour(\A)=m_2-1$ and its exponents are $(m_2,m_1,n-1)$.
\end{enumerate}
\end{Proposition}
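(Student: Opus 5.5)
The plan is to first pin down the combinatorics, then determine $e_\A$ by comparing the characteristic-polynomial formula of Theorem~\ref{thm:bourbaki-characteristic-polynomial} with the bounds of Theorem~\ref{thm:upper-bound-line-arrangements}, and finally read off the remaining statements.

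\emph{Combinatorics and $r\geq-1$.} The lines through $p_1$ and the lines through $p_2$ share at most one line, namely $\overline{p_1p_2}$ when it lies in $\A$. Hence $m_1+m_2\leq n+2$, that is, $r\geq-1$. Equality holds exactly when $\overline{p_1p_2}\in\A$ and every line of $\A$ passes through $p_1$ or $p_2$.

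\emph{The case $r=-1$.} Any other intersection point $q$ is then double. It is the meeting point of a line through $p_1$ with a line through $p_2$, neither equal to $\overline{p_1p_2}$. Hence $\overline{p_1q}\in\A$. Also $\overline{p_1p_2}\in\A$, so $p_1$ is modular. Supersolvable arrangements are free, which gives (1).

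\emph{The characteristic polynomial.} Since all other points are double, $\sum_p\binom{m_p}{2}=\binom{n+1}{2}$ gives
\[
b_2(\A)=\binom{n+1}{2}-\binom{m_1-1}{2}-\binom{m_2-1}{2}.
\]
Set $e_0:=n+1-m_1=m_2+r$ and $q:=e_0-t$. Exactly the computation in the proof of Theorem~\ref{thm:extremal-triangular-arrangements}, now with the extra term $-\binom{m_2-1}{2}$, yields
\[
\overline{\chi}_\A(t)-\binom t2=\frac{q(2m_1+q-3)}{2}-\binom{m_2-1}{2}.
\]
At $t=e_0$ this gives $\overline{\chi}_\A(e_0)=\binom{e_0}{2}-\binom{m_2-1}{2}$. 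A routine expansion with $e_0=m_2+r$ turns this into $(r+1)(2m_2+r-2)/2$, which is positive for $r\geq0$ and $m_2\geq3$.

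\emph{Determining $e$ when $r\geq0$.} By \eqref{eq:initial-degree-maximal-multiplicity}, $e\leq e_0$. Suppose $e<e_0$, so $q\geq1$ at $t=e$. If $\A$ is not free, the second part of Theorem~\ref{thm:upper-bound-line-arrangements} applies. Combined with the displayed identity it gives
\[
\frac{q(2m_1+q-3)}{2}\leq\binom{m_2-1}{2}-\binom{m_1-1}{2}\leq0,
\]
which is impossible since $q\geq1$ and $m_1\geq3$.

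If $\A$ is free, then $\overline{\chi}_\A(e)=0$, so
\[
\binom e2+\frac{q(2m_1+q-3)}{2}=\binom{m_2-1}{2}.
\]
To exclude this I would invoke the known dichotomy for free arrangements, \cite[Theorem~1.10]{Dimca-local-derivations}: either $e=n+1-m(\A)$ or $m(\A)\leq e+1$. Since $e\neq e_0$, we get $e\geq m_1-1\geq m_2-1$. Then $\binom e2\geq\binom{m_2-1}{2}$, and the positive $q$-term gives a contradiction. This appeal to the literature is the step I expect to need the most care. If the cited result is not available in exactly this form, I would instead try to show directly that $\rho_{p_1}$ is part of a basis of the free module $\syz(J_\A)$.

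Hence $e=e_0=m_2+r$. Then $\Bour(\A)=\overline{\chi}_\A(e_0)>0$, so $\A$ is not free, and the Bourbaki formula in (2) follows. Moreover, \eqref{eq:type-second-multiplicity} gives $t(\A)\leq e-m_2+1=r+1$.

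\emph{The case $r=0$.} Here $t(\A)=1$, so $\A$ is plus-one generated, and the formula in (2) gives $\Bour(\A)=m_2-1$. Its exponents $(d_1,d_2,d_3)$ satisfy $d_1=e=m_2$ and $d_1+d_2=n+1$, so $d_2=m_1$. Finally, $\Bour=d_3-d_2+1$ gives $d_3=m_1+m_2-2=n-1$, which is (3).
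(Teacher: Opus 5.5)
Your proof is correct, but it is organized differently from the paper's.

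\textbf{How the paper argues.} The paper proves nonfreeness first, with no case split. Writing $E=n+1-m_1$ and $u=E-s$, it rewrites your identity as
\[
\overline{\chi}_\A(s)=\frac{(r+1)(2m_2+r-2)}{2}+u(m_1-m_2-r+u-1).
\]
It then uses $m_1\geq m_2$ and $u(u-r-1)\geq-(r+1)^2/4$ to get $\overline{\chi}_\A(s)\geq (r+1)(4m_2+r-5)/4>0$ for every integer $1\leq s\leq E$. Since $e_\A\leq E$, this gives $\Bour(\A)>0$. Only afterwards does it exclude $e_\A<E$, by a single application of Theorem~\ref{thm:upper-bound-line-arrangements}. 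It uses the $m_2$-bound, which already forces $q(2m_1+q-3)/2\leq0$; your use of the $m_1$-bound works equally well.

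\textbf{How your route differs.} You fix $e$ first, so you must treat a hypothetical free arrangement with $e<e_0$ separately. For that you import the dichotomy ``$e=n+1-m_1$ or $m_1\leq e+1$''. That fact is true, and you do not need the literature for it. Suppose $\syz(J_\A)=R\nu_1\oplus R\nu_2$ with degrees $e\leq n-e$ and $e<n+1-m_1$. By Proposition~\ref{prop:syzygies-associated-with-points}, $\rho_{p_1}\notin R\nu_1$, because its coordinates have no common factor and its degree exceeds $e$. So its $\nu_2$-coefficient is nonzero, and $n+1-m_1\geq n-e$.

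\textbf{On your fallback.} The argument just given is weaker than what you proposed. You cannot expect $\rho_{p_1}$ to be part of a basis in general, since that would force $\deg\rho_{p_1}\in\{e,n-e\}$.

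\textbf{What each route buys.} With that short argument your route is self-contained. It is arguably more transparent, since it reuses the identity from Theorem~\ref{thm:extremal-triangular-arrangements} directly. The paper's route avoids your case split by means of an explicit quadratic estimate. That estimate shows $\overline{\chi}_\A$ is positive on the whole admissible range, which yields nonfreeness before $e_\A$ is known.
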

\begin{proof}
The sets of lines through $p_1$ and $p_2$ have respectively $m_1$ and $m_2$ elements and have at most one line in common. Hence $n+1\geq m_1+m_2-1$, so $r\geq-1$.

Suppose first that $r=-1$. Then the line $p_1p_2$ belongs to $\A$ and every line of $\A$ passes through either $p_1$ or $p_2$. Thus $p_1$ is a modular point, so $\A$ is supersolvable and hence free.

Assume now that $r\geq0$. Since $p_1$ and $p_2$ are the only intersection points of multiplicity greater than two,
\[
b_2(\A)
=
\binom{n+1}{2}
-\binom{m_1-1}{2} -\binom{m_2-1}{2}.
\]
We first show that $\A$ is not free. Set $E=n+1-m_1=m_2+r$. By \eqref{eq:initial-degree-multiplicity-bound}, $e_\A\leq E$. For $1\leq s\leq E$, write $u=E-s$, so $u\geq0$. Using the preceding formula for $b_2(\A)$, we obtain
\[
\overline{\chi}_\A(s)
=
\frac{(r+1)(2m_2+r-2)}{2}
+ u(m_1-m_2-r+u-1).
\]
Since $m_1\geq m_2$ and $u\geq0$,
\[
\overline{\chi}_\A(s)
\geq
\frac{(r+1)(2m_2+r-2)}{2}
+ u(u-r-1).
\]
Since
\[
u(u-r-1)\geq-\frac{(r+1)^2}{4},
\]
it follows that
\[
\overline{\chi}_\A(s)
\geq
\frac{(r+1)(4m_2+r-5)}{4}>0,
\]
where the last inequality follows from $m_2\geq3$ and $r\geq0$. In particular, $\overline{\chi}_\A(e_\A)>0$, so Theorem~\ref{thm:bourbaki-characteristic-polynomial} shows that $\A$ is not free.

We next show that $e_\A=n+1-m_1$. Otherwise, \eqref{eq:initial-degree-multiplicity-bound} gives $e_\A<n+1-m_1$, while \eqref{eq:type-maximal-multiplicity} and $t(\A)\geq1$ give $m_1\leq e_\A$. Set $q=n+1-e_\A-m_1\geq1$. Using the formula for $b_2(\A)$ and Theorem~\ref{thm:bourbaki-characteristic-polynomial}, we obtain
\[
\Bour(\A)
-
\left(
\binom{e_\A}{2}-\binom{m_2-1}{2}
\right)
=
\frac{q(2m_1+q-3)}{2}>0,
\]
contradicting Theorem~\ref{thm:upper-bound-line-arrangements}. Therefore $e_\A=n+1-m_1=m_2+r.$ Substitution in $\Bour(\A)=\overline{\chi}_\A(e_\A)$ now gives
\[
\Bour(\A)
=
\binom{e_\A}{2}-\binom{m_2-1}{2}
=
\frac{(r+1)(2m_2+r-2)}{2},
\]
while \eqref{eq:type-second-multiplicity} gives $t(\A)\leq e_\A-m_2+1=r+1$.

Finally, suppose that $r=0$. Then $e_\A=m_2$, and hence $t(\A)\leq1$. Since $\A$ is not free, $t(\A)\geq1$, so $t(\A)=1$ and $\A$ is plus-one generated. If $(d_1,d_2,d_3)$ are its exponents, then $d_1=e_\A=m_2$ and $d_2=n+1-d_1=n+1-m_2=m_1.$ Moreover, $\Bour(\A)=m_2-1=d_3-d_2+1$, and therefore $d_3=m_1+m_2-2=n-1.$ Thus the exponents are $(m_2,m_1,n-1)$.
\end{proof}

\begin{Remark}
\label{rem:two-multiple-points}
Proposition~\ref{prop:two-multiple-points} may be viewed as a two-point extension of Theorem~\ref{thm:extremal-triangular-arrangements}. Indeed, when there is only one intersection point of multiplicity greater than two, the extremal value $\Bour(\A)=\binom{e_\A}{2}$ is attained. With two such points, the refined bound is attained:
\[
\Bour(\A)=\binom{e_\A}{2}-\binom{m_2-1}{2}.
\]

Moreover, the integer $r=n+1-m_1-m_2$ measures the difference between the initial degree and the second largest multiplicity, since $e_\A=m_2+r,\qquad t(\A)\leq r+1.$ Thus the geometry of the two distinguished multiple points gives direct control over the type.

The case $r=0$ is particularly simple. Then
\[
e_\A=m_2,\qquad t(\A)=1,\qquad \Bour(\A)=m_2-1,
\]
so $\A$ is plus-one generated.
\end{Remark}
The cases $e_\A\leq2$ were classified in
\cite[Theorem~4.12 and Corollary~4.13]{Dimca-Ibadula-Macinic}, while
the nonfree cases with $e_\A=3$ needed below are covered by
Burity and Toh\u{a}neanu \cite[Theorem~2.6]{Burity-Tohaneanu}.
The next theorem expresses these low-degree cases from the Bourbaki
viewpoint.

\begin{Theorem}
\label{thm:low-degree-bourbaki-classification}
Let $\A$ be an essential arrangement of $n+1$ lines.
\begin{enumerate}
\item If $e_\A=1$, then $\A$ is free.

\item If $e_\A=2$, then $\Bour(\A)\in\{0,1\}$. Thus $\A$ is either
free or nearly free.

\item Assume that $e_\A=3$. Then $0\leq\Bour(\A)\leq3$. If $\A$ is
not free, exactly one of the following occurs:
\begin{enumerate}
\item $m_1(\A)=n-2$. Fix a point $p\in L_2(\A)$ with
$m_p=n-2$, and let $s$ be the number of triple points distinct from $p$. Then $s\in\{0,1,2\}$ and
\[
\Bour(\A)=3-s.
\]
More precisely, if $s=0$, then $\Bour(\A)=3$ and $t(\A)=2$; if $s=1$, then $\Bour(\A)=2$ and $\A$ is plus-one generated but not nearly free; and if $s=2$, then $\Bour(\A)=1$ and $\A$ is nearly free.

\item $n=6$ and $m_1(\A)=3$. For $j\geq2$, let $n_j(\A):=\#\{p\in L_2(\A):m_p=j\}$. In this case $\A$ has only double and
triple points, and either
\[
n_3(\A)=4,\qquad \Bour(\A)=2,
\]
or
\[
n_3(\A)=5,\qquad \Bour(\A)=1.
\]
In the first case $\A$ is plus-one generated but not nearly free, while in the second case $\A$ is nearly free.
\end{enumerate}
\end{enumerate}
\end{Theorem}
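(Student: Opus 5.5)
Parts (1) and (2) follow from the bounds already proved, so no classification is needed there. If $e_\A=1$, Theorem~\ref{thm:sharp-triangular-bourbaki-bound} gives $\Bour(\A)\leq\binom12=0$, hence $\A$ is free. If $e_\A=2$, the same theorem gives $\Bour(\A)\leq1$. Since $\Bour(\A)=1$ characterizes nearly free curves, this yields (2). For $e_\A=3$ the triangular bound gives $0\leq\Bour(\A)\leq3$. In the nonfree case, Corollary~\ref{cor:multiplicity-and-type-consequences} gives $m_2(\A)\leq3$. The classification itself I would import from Burity--Toh\u{a}neanu \cite[Theorem~2.6]{Burity-Tohaneanu}: nonfree arrangements with $e_\A=3$ either have a point of multiplicity $n-2$, or satisfy $n=6$ with only double and triple points. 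I would then compute $\Bour$ in each case with Theorem~\ref{thm:bourbaki-characteristic-polynomial}, i.e. $\Bour(\A)=9-3n+b_2(\A)-n$.

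\emph{Case (a).} Fix $p$ with $m_p=n-2$, so $e=n+1-m_p$ and $\rho_p$ has minimal degree. Every other multiple point has multiplicity at most $3$: for a point $q\neq p$, at most one line through $q$ passes through $p$, and only three lines avoid $p$. Counting, $b_2=(n-3)+\#\{\text{double points}\}+2s$. The double points are obtained by subtracting from the $\binom{n+1}{2}$ pairs of lines the $\binom{n-2}{2}$ pairs through $p$ and the $3$ pairs at each of the $s$ triple points. Substituting should give $\Bour(\A)=3-s$; alternatively one can use the modular-defect identity announced in the introduction. The condition $s\leq2$ then follows from $\Bour(\A)\geq1$ in the nonfree case. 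The type statements follow from the preliminaries. For $s=2$, $\Bour=1$ means nearly free. For $s=1$, $\Bour=2$, and I would show $t(\A)=1$ by applying Theorem~\ref{thm:upper-bound-line-arrangements}: $m_2=3$ gives $t\leq e-m_2+1=1$, so $\A$ is plus-one generated but not nearly free. For $s=0$, $m_2(\A)=2$ (if $n-2\geq3$), so Theorem~\ref{thm:extremal-triangular-arrangements} gives $t=e-1=2$ and $\Bour=3$. Small $n$ with $n-2\leq2$ needs a separate check.

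\emph{Case (b).} Here $n=6$ and $m_1=3$, so $b_2=n_2+2n_3$ with $n_2+3n_3=21$. Hence $\Bour=9-18+21-n_3-6=6-n_3$. Since $1\leq\Bour\leq3$, this gives $n_3\geq3$. Because $m_2=3=e$, Corollary~\ref{cor:multiplicity-and-type-consequences}(1) shows $\A$ is plus-one generated, so $\Bour\neq3$, since plus-one generated arrangements have $t=1$ and $\Bour=d_3-d_2+1\leq\dots$. Ruling out $n_3=3$ is where I expect the main obstacle. The cleanest route is that the cited classification lists only $n_3\in\{4,5\}$; otherwise one would need a separate argument excluding $\Bour=3$ under $t=1$. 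The two cases $n_3=4$ (plus-one generated, $\Bour=2$) and $n_3=5$ (nearly free, $\Bour=1$) then follow. Exclusivity of (a) and (b) holds because $n-2=3$ would force $n=5\neq6$.
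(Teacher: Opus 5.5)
Your overall route matches the paper's. You use the triangular bound for (1) and (2), import only the dichotomy $m_1(\A)=n-2$ versus $n=6,\ m_1(\A)=3$ from Burity--Toh\u{a}neanu, and compute $\Bour(\A)=\overline{\chi}_\A(3)$ by pair counting, getting $3-s$ and $6-n_3$.

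\textbf{Gap in case (b).} You leave $n_3=3$ unresolved and fall back on the hope that the cited classification lists only $n_3\in\{4,5\}$. The paper uses that reference only for the dichotomy, so you cannot count on it. The missing step is one line from results you already invoke. Once $n_3\geq 3$ there are at least two triple points, so $m_2(\A)=3$. Then Theorem~\ref{thm:upper-bound-line-arrangements} gives
\[
\Bour(\A)\leq\binom32-\binom22=2 .
\]
Equivalently, $\Bour(\A)=3=\binom{e}{2}$ would force $m_2(\A)=2$ and $m_1(\A)=n+1-e=4$ by Theorem~\ref{thm:extremal-triangular-arrangements}; this is the paper's argument. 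It would also force $t(\A)=e-1=2$ by Theorem~\ref{thm:sharp-triangular-bourbaki-bound}, contradicting the $t(\A)=1$ you had just derived. Hence $n_3\in\{4,5\}$. Your unfinished estimate $\Bour=d_3-d_2+1\leq\dots$ can also be completed: $d_2=n+1-d_1=4$, and $d_3\leq n-1=5$ by the degree bound on generators of $\syz(J_\A)$ used in Lemma~\ref{lem:regularity-bourbaki-ideal}, giving $\Bour(\A)\leq 2$. The equality criterion in the triangular bound is the more direct route.

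\textbf{Case (a).} Your geometric justification that every other multiple point has multiplicity at most $3$ actually gives only $m_q\leq 1+3=4$. A point $q\neq p$ can lie on one line through $p$ and on all three lines avoiding $p$. That configuration is free: every line passes through $p$ or $q$ and $\overline{pq}\in\A$, so $p$ is modular. The claim therefore holds in the nonfree case. Its clean justification is $m_2(\A)\leq e=3$ from Corollary~\ref{cor:multiplicity-and-type-consequences}, which you already cited and which the paper uses.

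No separate small-$n$ check is needed for $s=0$. There $\Bour(\A)=3=\binom32$ gives $t(\A)=2$ directly by Theorem~\ref{thm:sharp-triangular-bourbaki-bound}. For $s=1$, using $m_2(\A)=3=e$ to get $t(\A)=1$ is a valid alternative to the paper's sandwich $\binom{t+1}{2}\leq 2$. Your check that (a) and (b) are mutually exclusive is correct.
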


\begin{proof}
Assume first that $\A$ is not free. Then $\Bour(\A)\geq1$, while Theorem~\ref{thm:sharp-triangular-bourbaki-bound} gives $\Bour(\A)\leq\binom{e_\A}{2}.$ For $e_\A=1$ this is impossible, proving (1). If $e_\A=2$, then $\Bour(\A)=1$, so $\A$ is nearly free. This proves (2).

Assume now that $e_\A=3$ and that $\A$ is not free. By \cite[Theorem~2.6]{Burity-Tohaneanu}, the nonfree members of the
classification with minimal degree three fall into exactly two
possibilities:
\[
m_1(\A)=n-2
\qquad\text{or}\qquad
n=6,\quad m_1(\A)=3.
\]

Suppose first that $m_1(\A)=n-2$, and choose $p\in L_2(\A)$ with $m_p=n-2$. Since $e_\A=3$ and $\A$ is nonfree, Corollary~\ref{cor:multiplicity-and-type-consequences} gives $m_2(\A)\leq3$. Hence every intersection point distinct from $p$ is double or triple. Let $s$ be the number of triple points distinct from $p$, and let $N_2$ be the number of double points distinct from $p$. Counting pairs of lines gives $\binom{n+1}{2} = \binom{n-2}{2}+3s+N_2,$ while $b_2(\A)=n-3+2s+N_2.$ Eliminating $N_2$ yields $b_2(\A)=4n-6-s$. Therefore Theorem~\ref{thm:bourbaki-characteristic-polynomial} gives $\Bour(\A) =\overline{\chi}_\A(3) =3-s.$ Since $\Bour(\A)\geq1$, it follows that $s\in\{0,1,2\}$.

If $s=0$, then $\Bour(\A)=3=\binom32$, and Theorem~\ref{thm:sharp-triangular-bourbaki-bound} gives $t(\A)=2$. If $s=1$, then $\Bour(\A)=2$, and Proposition~\ref{prop:type-bourbaki-multiplicity-sandwich} gives $\binom{t(\A)+1}{2}\leq2$. Hence $t(\A)=1$, so $\A$ is plus-one generated; it is not nearly free since $\Bour(\A)\neq1$. If $s=2$, then $\Bour(\A)=1$, so $\A$ is nearly free.

It remains to consider $n=6$ and $m_1(\A)=3$. Then every intersection point is double or triple. Writing $n_i=n_i(\A)$, pair counting gives $n_2+3n_3=\binom72=21,$ and hence $b_2(\A)=n_2+2n_3=21-n_3$. Thus $\Bour(\A) =\overline{\chi}_\A(3) =6-n_3.$ Since $1\leq\Bour(\A)\leq3$, one has $3\leq n_3\leq5$. If $n_3=3$, then $\Bour(\A)=3$, and Theorem~\ref{thm:extremal-triangular-arrangements} would give $m_1(\A)=n+1-e_\A=4$, a contradiction. Hence $n_3\in\{4,5\}$.

If $n_3=4$, then $\Bour(\A)=2$, and the preceding argument gives $t(\A)=1$, so $\A$ is plus-one generated but not nearly free. If $n_3=5$, then $\Bour(\A)=1$, so $\A$ is nearly free.
\end{proof}

We conclude this subsection by translating the preceding Bourbaki
bounds into lower bounds for the global Tjurina number and comparing
them with the classical bounds of du Plessis and Wall. Over $\CC$, for
a reduced plane curve of degree $d$ whose syzygy module has initial
degree $e$, du Plessis and Wall proved
\cite[Theorem~3.2]{DuPlessis-Wall}
\[
(d-1)(d-e-1) \leq\tau \leq (d-1)(d-e-1)+e^2.
\]
Moreover, if $2e+1>d$, their refined upper bound is
\[
\tau
\leq (d-1)(d-e-1)+e^2 -\frac{(2e+1-d)(2e+2-d)}{2}.
\]
For an arrangement $\A$ of $n+1$ lines, these become $n(n-e)\leq\tau(\A)\leq n(n-e)+e^2,$ and, when $e>n/2$,
\[
\tau(\A)
\leq n(n-e)+e^2-\binom{2e+1-n}{2}.
\]

On the other hand, rewriting \eqref{eq:bourbaki-tjurina-arrangement}, we have
\begin{equation}
\label{eq:tjurina-bourbaki-arrangement}
\tau(\A)=n(n-e)+e^2-\Bour(\A).
\end{equation}
Thus the lower and upper du Plessis--Wall bounds correspond, respectively, to upper and lower bounds for the Bourbaki degree. In particular, the general estimate $\Bour(\A)\leq e^2$ recovers the lower du Plessis--Wall bound, while Theorem~\ref{thm:sharp-triangular-bourbaki-bound} gives a substantially stronger estimate for line arrangements.

\begin{Corollary}
\label{cor:tjurina-lower-bounds-line-arrangements}
Let $\A$ be an essential arrangement of $n+1$ lines and set $e=e_\A$. Then
\[
\tau(\A)\geq
n(n-e)+\binom{e+1}{2}.
\]
The correction term $\binom{e+1}{2}$ is optimal among correction terms depending only on $e$.

If $\A$ is not free, then
\begin{equation}
\label{eq:refined-second-multiplicity-tjurina-bound}
\tau(\A)\geq
n(n-e) +\binom{e+1}{2} +\binom{m_2(\A)-1}{2}.
\end{equation}
If, in addition, $e\neq n+1-m_1(\A)$, then
\begin{equation}
\label{eq:refined-maximal-multiplicity-tjurina-bound}
\tau(\A)\geq
n(n-e) +\binom{e+1}{2} +\binom{m_1(\A)-1}{2}.
\end{equation}
\end{Corollary}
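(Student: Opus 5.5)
The plan is to substitute the Bourbaki bounds already proved into the identity \eqref{eq:tjurina-bourbaki-arrangement}, namely $\tau(\A)=n(n-e)+e^2-\Bour(\A)$. Every upper bound on $\Bour(\A)$ then becomes a lower bound on $\tau(\A)$, and the only arithmetic needed is $e^2-\binom e2=\binom{e+1}{2}$.

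\emph{First inequality.} Theorem~\ref{thm:sharp-triangular-bourbaki-bound} gives $\Bour(\A)\leq\binom e2$ for every essential arrangement, free or not. Substituting into \eqref{eq:tjurina-bourbaki-arrangement} yields
\[
\tau(\A)\geq n(n-e)+e^2-\binom e2=n(n-e)+\binom{e+1}{2}.
\]

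\emph{Optimality.} We need a family where the bound is attained for each $e\geq1$. By Theorem~\ref{thm:sharp-triangular-bourbaki-bound}, for each $e\geq1$ there is an arrangement with $e_\A=e$ and $\Bour(\A)=\binom e2$: a triangle when $e=1$, and a generic arrangement of $e+2$ lines when $e\geq2$. For these arrangements the first inequality is an equality. Now suppose $c(e)$ is any correction term such that $\tau(\A)\geq n(n-e)+c(e)$ holds for all essential $\A$ with $e_\A=e$. Applying it to this extremal family forces $c(e)\leq\binom{e+1}{2}$. So no correction term depending only on $e$ can exceed $\binom{e+1}{2}$.

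\emph{Refinements.} When $\A$ is not free, Theorem~\ref{thm:upper-bound-line-arrangements} gives $\Bour(\A)\leq\binom e2-\binom{m_2(\A)-1}{2}$. Substituting as above gives \eqref{eq:refined-second-multiplicity-tjurina-bound}. If in addition $e\neq n+1-m_1(\A)$, the same theorem gives the bound with $m_1(\A)$ in place of $m_2(\A)$, and substitution yields \eqref{eq:refined-maximal-multiplicity-tjurina-bound}.

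There is no real obstacle here. The only point needing care is the precise meaning of "optimal": it is realized by the extremal family of Theorem~\ref{thm:sharp-triangular-bourbaki-bound}, which attains the bound for every value of $e$.
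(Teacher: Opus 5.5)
Your proposal is correct and follows essentially the same route as the paper. You substitute the bounds of Theorems~\ref{thm:sharp-triangular-bourbaki-bound} and~\ref{thm:upper-bound-line-arrangements} into $\tau(\A)=n(n-e)+e^2-\Bour(\A)$, and you show optimality with the same extremal family: the triangle for $e=1$, and a generic arrangement of $e+2$ lines for $e\geq2$.
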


\begin{proof}
By \eqref{eq:tjurina-bourbaki-arrangement} and Theorem~\ref{thm:sharp-triangular-bourbaki-bound},
\[
\tau(\A)
\geq n(n-e)+e^2-\binom e2
=
n(n-e)+\binom{e+1}{2}.
\]
The refined estimates follow in the same way from Theorem~\ref{thm:upper-bound-line-arrangements}.

For optimality, equality is attained by a triangle when $e=1$ and,
for $e\geq2$, by a generic arrangement of $e+2$ lines; see
Example~\ref{ex:generic-arrangements-bourbaki}. Hence the correction
term cannot be increased as a function of $e$ alone.
\end{proof}

\begin{Remark}
\label{rem:comparison-dimca-tjurina}
The multiplicity corrections above also improve the bounds obtained by Dimca in \cite[Theorem~1.3]{Dimca-minimal-Tjurina}. In the notation of that result,
\[
e=r,\qquad
m_1(\A)=m(C),\qquad
m_2(\A)=n(C).
\]
For an arrangement of $n+1$ lines, \cite[Theorem~1.3(i)]{Dimca-minimal-Tjurina} gives
\[
\tau(\A)\geq
n(n-e)+\binom e2+\binom{m_2(\A)}2+1.
\]
The difference between the correction term in \eqref{eq:refined-second-multiplicity-tjurina-bound} and the one above is
\[
\binom{e+1}{2}
+\binom{m_2(\A)-1}{2} -\binom e2-\binom{m_2(\A)}2-1
=
e-m_2(\A).
\]
Since $m_2(\A)\leq e$, our estimate is at least as strong, and is strictly stronger whenever $m_2(\A)<e$.

Likewise, if $e\neq n+1-m_1(\A)$, the improvement provided by \eqref{eq:refined-maximal-multiplicity-tjurina-bound} over \cite[Theorem~1.3(ii)]{Dimca-minimal-Tjurina} is exactly $e-m_1(\A)\geq0$.

In particular, if $e\geq3$ and $m_2(\A)\geq3$, then
\[
\tau(\A)\geq
n(n-e) +\binom{e+1}{2} +\binom{m_2(\A)-1}{2} \geq n(n-e)+7,
\]
recovering \cite[Corollary~1.4]{Dimca-minimal-Tjurina}. The full correction above is stronger: compared with Dimca's correction $\binom e2+4$, its improvement is
\[
e+\binom{m_2(\A)-1}{2}-4.
\]

Recently, Pokora \cite[Corollary~4.2]{Pokora-Tjurina-bounds}
proved that if $\A$ is a free arrangement of $n+1$ lines, then
\[
\tau(\A)\geq\frac{3n^2}{4}.
\]
This bound is of a different nature from the estimates above, since it
depends only on the number of lines and assumes freeness. Indeed, if
$\A$ is free with exponents $(e,n-e)$, then
$\Bour(\A)=0$, and \eqref{eq:tjurina-bourbaki-arrangement} gives
\[
\tau(\A)
=n^2-e(n-e)
=
\frac{3n^2}{4}
+\frac{(n-2e)^2}{4}.
\]
Thus Pokora's bound is recovered in the free case, with equality
precisely when the two exponents are equal. By contrast, the bounds
\eqref{eq:refined-second-multiplicity-tjurina-bound} and
\eqref{eq:refined-maximal-multiplicity-tjurina-bound} concern
nonfree arrangements and incorporate the initial degree together with
the two largest intersection multiplicities.
\end{Remark}

Combining the refined du Plessis--Wall upper bound with Theorem~\ref{thm:upper-bound-line-arrangements} also gives a restriction in the large-$e$ range.

\begin{Corollary}
\label{cor:large-e-bourbaki-window}
Assume that $k=\CC$. Let $\A$ be an essential nonfree arrangement of $n+1$ lines and assume that $e>n/2$. Then
\[
\binom{2e+1-n}{2}
\leq
\Bour(\A)
\leq
\binom e2-\binom{m_2(\A)-1}{2}.
\]
Consequently,
\[
\binom{2e+1-n}{2}
+\binom{m_2(\A)-1}{2} \leq
\binom e2.
\]
If $e\neq n+1-m_1(\A)$, then $m_2(\A)$ may be replaced by $m_1(\A)$ in both inequalities.
\end{Corollary}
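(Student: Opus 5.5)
The plan is to combine the rewritten Bourbaki--Tjurina identity \eqref{eq:tjurina-bourbaki-arrangement} with the refined du Plessis--Wall upper bound to get the lower inequality. The upper inequality is already available from Theorem~\ref{thm:upper-bound-line-arrangements}.

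First, since $k=\CC$ and $e>n/2$, the hypothesis $2e+1>d$ of the refined du Plessis--Wall bound holds with $d=n+1$. As recalled just before Corollary~\ref{cor:tjurina-lower-bounds-line-arrangements}, that bound reads
\[
\tau(\A)\leq n(n-e)+e^2-\binom{2e+1-n}{2}.
\]
Substituting $\tau(\A)=n(n-e)+e^2-\Bour(\A)$ from \eqref{eq:tjurina-bourbaki-arrangement} and cancelling the common terms gives $\Bour(\A)\geq\binom{2e+1-n}{2}$. This is the lower inequality.

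Second, the upper inequality $\Bour(\A)\leq\binom e2-\binom{m_2(\A)-1}{2}$ is exactly the first assertion of Theorem~\ref{thm:upper-bound-line-arrangements}, which applies because $\A$ is essential and nonfree. Chaining the two inequalities yields the displayed consequence
\[
\binom{2e+1-n}{2}+\binom{m_2(\A)-1}{2}\leq\binom e2.
\]

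Finally, when $e\neq n+1-m_1(\A)$, the second part of Theorem~\ref{thm:upper-bound-line-arrangements} gives the same upper bound with $m_1(\A)$ in place of $m_2(\A)$. The lower bound is unchanged, so both inequalities hold with $m_1(\A)$.

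No step is a real obstacle. The only point needing care is that the du Plessis--Wall translation is correct, namely that the degree in their statement is $d=n+1$. One should check that $(d-1)(d-e-1)=n(n-e)$ and that $(2e+1-d)(2e+2-d)/2=\binom{2e+1-n}{2}$, both of which are immediate.
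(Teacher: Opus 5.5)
Your proposal is correct and matches the paper's proof: the lower bound comes from rewriting the refined du Plessis--Wall inequality through $\tau(\A)=n(n-e)+e^2-\Bour(\A)$. The upper bounds come from Theorem~\ref{thm:upper-bound-line-arrangements}, using its $m_1(\A)$ version when $e\neq n+1-m_1(\A)$. Your degree bookkeeping $d=n+1$ and the identity $(2e+1-d)(2e+2-d)/2=\binom{2e+1-n}{2}$ are both correct.
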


\begin{proof}
The refined du Plessis--Wall upper bound gives
\[
\tau(\A)
\leq n(n-e)+e^2-\binom{2e+1-n}{2}.
\]
By \eqref{eq:tjurina-bourbaki-arrangement}, this is equivalent to $\Bour(\A)\geq\binom{2e+1-n}{2}.$ The upper bound follows from Theorem~\ref{thm:upper-bound-line-arrangements}, and combining the two gives the second inequality. The assertion involving $m_1(\A)$ follows from the corresponding part of the same theorem.
\end{proof}

\subsection{Freeness defect and Terao's conjecture}
\label{subsec:freeness-defect-terao}

Set $N(\A):=J_\A^{\mathrm{sat}}/J_\A$ and define the \emph{freeness defect} of $\A$ by $\nu(\A):=\max_{q\in\ZZ}\dim_kN(\A)_q$. The formulas for this invariant are due to Dimca; see \cite[Theorem~1.7]{Dimca-minimal-Tjurina}. In the first range, its relation with the Bourbaki degree was also recorded in \cite[formula~(1.11)]{Abe-Dimca-Pokora-hierarchy}. The following proposition gives both ranges in our notation.

\begin{Proposition}
\label{prop:bourbaki-freeness-defect}
Assume that $k=\CC$. Let $\A$ be an essential arrangement of $n+1$ lines and set $e=e_\A$.
\begin{enumerate}
\item[{\rm (a)}] If $e\leq\lfloor n/2\rfloor$, then
\begin{equation}
\label{eq:bourbaki-equals-freeness-defect}
\nu(\A)=\Bour(\A).
\end{equation}

\item[{\rm (b)}] If $e\geq\lfloor n/2\rfloor$, then
\[
\nu(\A)=\Bour(\A)-\kappa(n,e),
\]
where
\[
\kappa(n,e):=
\left\lfloor\frac{n^2}{4}\right\rfloor-e(n-e)
=
\left(e-\left\lfloor\frac n2\right\rfloor\right)
\left(e-\left\lceil\frac n2\right\rceil\right).
\]
\end{enumerate}
At $e=\lfloor n/2\rfloor$, one has $\kappa(n,e)=0$, so the two formulas agree.
\end{Proposition}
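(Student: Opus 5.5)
The plan is to take Dimca's formulas for the freeness defect from \cite[Theorem~1.7]{Dimca-minimal-Tjurina} as a black box and rewrite them with the Tjurina formula \eqref{eq:tjurina-bourbaki-arrangement}. For a reduced plane curve of degree $d+1=n+1$ with $r=e=\indeg(\syz(J_f))$, that theorem should give $\nu=(d-1)^{\prime}$-type expressions of the form
\[
\nu(\A)=n(n-e)+e^2-\tau(\A)\quad\text{if } e\leq n/2,
\qquad
\nu(\A)=\left\lceil\tfrac{3n^2}{4}\right\rceil-\tau(\A)\quad\text{if } e\geq n/2 .
\]
The exact normalization has to be checked. Dimca writes these with $d$ equal to the curve degree, so I will translate $d\mapsto n+1$ and match the constant $\lceil 3n^2/4\rceil$, which is $n^2-\lfloor n^2/4\rfloor$. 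The first step is this translation. It is bookkeeping, but it is the only place where something can go wrong, so I will check it against a free arrangement, where $\nu=0$ forces $\tau=n^2-e(n-e)$ and hence agrees with Terao's factorization and \eqref{eq:tjurina-bourbaki-arrangement}. A second check is a generic arrangement via Example~\ref{ex:generic-arrangements-bourbaki}.

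For part (a), substituting \eqref{eq:tjurina-bourbaki-arrangement}, $\tau(\A)=n(n-e)+e^2-\Bour(\A)$, into the first formula gives $\nu(\A)=\Bour(\A)$ directly. The range $e\leq\lfloor n/2\rfloor$ is exactly the integer form of $e\leq n/2$.

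For part (b), I substitute the same identity into the second formula:
\[
\nu(\A)=n^2-\left\lfloor\tfrac{n^2}{4}\right\rfloor-n(n-e)-e^2+\Bour(\A)
=\Bour(\A)-\left(\left\lfloor\tfrac{n^2}{4}\right\rfloor-e(n-e)\right).
\]
The factorization $\lfloor n^2/4\rfloor-e(n-e)=(e-\lfloor n/2\rfloor)(e-\lceil n/2\rceil)$ follows by expanding and using $\lfloor n/2\rfloor+\lceil n/2\rceil=n$ and $\lfloor n/2\rfloor\lceil n/2\rceil=\lfloor n^2/4\rfloor$. It also shows that $\kappa(n,e)=0$ at $e=\lfloor n/2\rfloor$, so the two formulas agree on the overlap.

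Dimca's theorem may state the second range as $e\geq n/2$ with $d$ odd or even handled separately. If so, I will treat $n$ even and $n$ odd in two short cases. For odd $n$, the value $e=\lfloor n/2\rfloor<n/2$ lies in the first range, and the agreement $\kappa=0$ shows the second formula still holds there. The main obstacle is only the faithful matching of Dimca's conventions (degree shift and the exact constant); everything after that is algebra.
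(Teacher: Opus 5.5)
Your proposal is correct and follows the paper's proof. Both quote Dimca's Theorem~1.7, substitute $\tau(\A)=n(n-e)+e^2-\Bour(\A)$, and use $\lceil 3n^2/4\rceil=n^2-\lfloor n^2/4\rfloor$ and $\lfloor n/2\rfloor\lceil n/2\rceil=\lfloor n^2/4\rfloor$. The only difference is that the paper applies Dimca's stated ranges $e<(n+1)/2$ and $e\geq(n-1)/2$ directly, so your fallback of covering $e=\lfloor n/2\rfloor$ for odd $n$ through part (a) and $\kappa=0$ is valid but unnecessary.
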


\begin{proof}
If $e<(n+1)/2$, then \cite[Theorem~1.7(1)]{Dimca-minimal-Tjurina} gives
\[
\nu(\A)=n^2-e(n-e)-\tau(\A).
\]
By \eqref{eq:tjurina-bourbaki-arrangement}, the right-hand side is $\Bour(\A)$. Since $e$ is an integer, the condition $e<(n+1)/2$ is equivalent to $e\leq\lfloor n/2\rfloor$, proving (a).

If $e\geq(n-1)/2$, then \cite[Theorem~1.7(2)]{Dimca-minimal-Tjurina} gives
\[
\nu(\A)
=
\left\lceil\frac{3n^2}{4}\right\rceil-\tau(\A).
\]
Hence
\[
\Bour(\A)-\nu(\A)
=
\left\lfloor\frac{n^2}{4}\right\rfloor-e(n-e)
=
\kappa(n,e).
\]
Since $e$ is an integer, $e\geq(n-1)/2$ is equivalent to $e\geq\lfloor n/2\rfloor$. Finally,
\[
\left\lfloor\frac{n^2}{4}\right\rfloor
=
\left\lfloor\frac n2\right\rfloor
\left\lceil\frac n2\right\rceil,
\]
which gives the product expression for $\kappa(n,e)$.
\end{proof}

Combining Proposition~\ref{prop:bourbaki-freeness-defect} with the multiplicity bounds obtained above gives a corresponding refinement for the freeness defect.

\begin{Corollary}
\label{cor:refined-freeness-defect-bound}
Assume that $k=\CC$. Let $\A$ be an essential nonfree arrangement and set $e=e_\A$. If $e\leq\lfloor n/2\rfloor$, then
\begin{equation}
\label{eq:refined-freeness-defect-second-multiplicity}
\nu(\A)
\leq
\binom e2-\binom{m_2(\A)-1}{2}.
\end{equation}
If, in addition, $e\neq n+1-m_1(\A)$, then
\[
\nu(\A)
\leq
\binom e2-\binom{m_1(\A)-1}{2}.
\]
Still assuming $e\leq\lfloor n/2\rfloor$, one has
\[
\nu(\A)=\binom e2
\quad\text{if and only if}\quad
m_2(\A)=2
\quad\text{if and only if}\quad t(\A)=e-1.
\]

If $e\geq\lfloor n/2\rfloor$, then
\[
\nu(\A)
\leq
\binom e2-\binom{m_2(\A)-1}{2}-\kappa(n,e),
\]
with $m_2(\A)$ replaced by $m_1(\A)$ when $e\neq n+1-m_1(\A)$.
\end{Corollary}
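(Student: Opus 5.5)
The corollary follows by substituting the two regimes of Proposition~\ref{prop:bourbaki-freeness-defect} into the bounds of Theorems~\ref{thm:upper-bound-line-arrangements}, \ref{thm:sharp-triangular-bourbaki-bound} and \ref{thm:extremal-triangular-arrangements}. No new algebra is needed. The only care required is bookkeeping at the boundary value $e=\lfloor n/2\rfloor$ and the hypotheses under which each earlier bound holds.

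\emph{Range $e\leq\lfloor n/2\rfloor$.} Part (a) of Proposition~\ref{prop:bourbaki-freeness-defect} gives $\nu(\A)=\Bour(\A)$. Since $\A$ is nonfree, Theorem~\ref{thm:upper-bound-line-arrangements} applies and yields
\[
\nu(\A)\leq\binom e2-\binom{m_2(\A)-1}{2}.
\]
When $e\neq n+1-m_1(\A)$, the same theorem gives the bound with $m_1(\A)$ in place of $m_2(\A)$. For the equality statement, $\nu(\A)=\binom e2$ is equivalent to $\Bour(\A)=\binom e2$. By Theorem~\ref{thm:extremal-triangular-arrangements} (equivalence of (a), (b), (c)), this holds if and only if $m_2(\A)=2$, if and only if $t(\A)=e-1$.

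\emph{Range $e\geq\lfloor n/2\rfloor$.} Part (b) of Proposition~\ref{prop:bourbaki-freeness-defect} gives $\nu(\A)=\Bour(\A)-\kappa(n,e)$. Subtracting $\kappa(n,e)$ from both sides of the bound in Theorem~\ref{thm:upper-bound-line-arrangements} gives
\[
\nu(\A)\leq\binom e2-\binom{m_2(\A)-1}{2}-\kappa(n,e).
\]
The $m_1(\A)$ variant follows in the same way under $e\neq n+1-m_1(\A)$. At the overlap $e=\lfloor n/2\rfloor$ we have $\kappa=0$, so the two statements are consistent.

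There is no real obstacle. The one point needing attention is that Proposition~\ref{prop:bourbaki-freeness-defect} assumes $k=\CC$, and this is already in force in the corollary. Note also that we never need $\kappa(n,e)\geq0$, since it enters only as an exact correction term.
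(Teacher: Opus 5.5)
Your proposal is correct and is essentially the paper's own argument. Like the paper, you insert the two cases of Proposition~\ref{prop:bourbaki-freeness-defect} into the bounds of Theorem~\ref{thm:upper-bound-line-arrangements} and read off the equality case from Theorem~\ref{thm:extremal-triangular-arrangements}, and your bookkeeping of the hypotheses and of the overlap at $e=\lfloor n/2\rfloor$ is accurate.
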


\begin{proof}
The assertions follow immediately from Proposition~\ref{prop:bourbaki-freeness-defect}, Theorem~\ref{thm:upper-bound-line-arrangements}, and Theorem~\ref{thm:extremal-triangular-arrangements}.
\end{proof}

\begin{Remark}
\label{rem:freeness-defect-comparison}
In the range $e\leq\lfloor n/2\rfloor$, the preceding estimate improves \cite[Corollary~1.8]{Dimca-minimal-Tjurina}. Indeed, Dimca's bound becomes
\[
\nu(\A)
\leq
\binom{e+1}{2}-\binom{m_2(\A)}2-1.
\]
The difference between this upper bound and \eqref{eq:refined-freeness-defect-second-multiplicity} is $e-m_2(\A)\geq0$. Thus our estimate is strictly stronger whenever $m_2(\A)<e$ and agrees with Dimca's estimate when $m_2(\A)=e$.
\end{Remark}

We now turn to Terao's conjecture, which asserts that freeness of an arrangement is determined by its intersection lattice. The refined Bourbaki bound gives the following criterion.

\begin{Theorem}
\label{thm:terao-triangular-criterion}
Assume that $k=\CC$. Let $\A$ be an essential free arrangement of $n+1$ lines with exponents $(e,n-e)$, where $e\leq n-e$, and let $\B$ be an arrangement such that $L(\B)\simeq L(\A)$. Set $\mu:=m_2(\A)=m_2(\B)$. If
\begin{equation}
\label{eq:terao-triangular-condition}
\binom{e+1}{2}
< n+\binom{\mu-1}{2},
\end{equation}
then $\B$ is free, necessarily with exponents $(e,n-e)$.

If, in addition, $e\neq n+1-m_1(\A)$, the same conclusion holds under the condition
\[
\binom{e+1}{2}
< n+\binom{m_1(\A)-1}{2}.
\]
\end{Theorem}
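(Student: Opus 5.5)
The plan is to argue by contradiction, using Theorem~\ref{thm:bourbaki-characteristic-polynomial} together with the refined bound of Theorem~\ref{thm:upper-bound-line-arrangements} applied to $\B$. Since $L(\B)\simeq L(\A)$, both arrangements have the same reduced characteristic polynomial, and Terao's factorization gives
\[
\overline{\chi}_\B(t)=\overline{\chi}_\A(t)=(t-e)(t-n+e).
\]
They also share the multiplicities $m_1,m_2$. Moreover, $\B$ is essential, since essentiality is read off from the rank of the lattice. Set $e'=e_\B$. By Theorem~\ref{thm:bourbaki-characteristic-polynomial},
\[
\Bour(\B)=(e'-e)(e'-n+e).
\]
Because $\Bour(\B)\geq0$, the integer $e'$ lies outside the open interval $(e,n-e)$. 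If $\B$ is free, its exponents $(e',n-e')$ factor the same polynomial, so they equal $(e,n-e)$.

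Assume now that $\B$ is not free. Then $\Bour(\B)\geq1$, so $e'<e$ or $e'>n-e$.

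First consider $e'>n-e$. Since $e\leq n/2$, we get $e'>n/2$. Over $\CC$, the du Plessis--Wall constraint in Corollary~\ref{cor:large-e-bourbaki-window}, combined with the formula $\Bour(\B)=\overline{\chi}_\B(e')$, should rule this out. I would first try the cleaner route: a $\CC$-result such as du Plessis--Wall gives $e'\le n-e'$ or $\tau\ge \ldots$. Actually the simplest tool is the lower du Plessis--Wall bound for $\tau$ together with the fact that $\tau(\B)=\tau(\A)=n^2-e(n-e)$. From $\tau(\B)\le n(n-e')+e'^2-\binom{2e'+1-n}{2}$, one gets $\binom{2e'+1-n}{2}\le (e'-e)(e'-n+e)$. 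Writing $e'=n-e+j$ with $j\ge1$, the left side is $\binom{n-2e+2j+1}{2}$ and the right side is $j(n-2e+j)$. The left side is strictly larger, a contradiction. So this case cannot occur, and it uses only $k=\CC$.

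It remains to handle $e'<e$, which is the main case. By Proposition~\ref{prop:syzygies-associated-with-points} we have $e'\le n+1-m_1$, so Theorem~\ref{thm:upper-bound-line-arrangements} applies to $\B$ and gives
\[
(e-e')(n-e-e')=\Bour(\B)\leq\binom{e'}2-\binom{\mu-1}{2}.
\]
Write $e'=e-j$ with $j\geq1$. The key step is to show that the left side is at least $n-2e+1+\ldots$; more precisely, one checks that
\[
j(n-2e+j)-\binom{e-j}{2}
\]
is minimized at $j=1$ over the admissible range. Expanding, it equals
\[
j(n-2e+j)-\binom{e-j}{2}=jn-2je+j^2-\frac{(e-j)(e-j-1)}{2}.
\]
This expression is increasing in $j$: the difference between consecutive values of $j$ is $n-2e+2j+1+(e-j-1)=n-e+j\geq0$. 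So it suffices to treat $j=1$. There the inequality reads
\[
n-2e+1\leq\binom{e-1}{2}-\binom{\mu-1}{2},
\]
that is, $n+\binom{\mu-1}{2}\le \binom{e-1}{2}+2e-1=\binom{e+1}{2}$. This contradicts hypothesis \eqref{eq:terao-triangular-condition}.

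For the $m_1$ version, I need $e'\neq n+1-m_1$ in order to use the $m_1$-refined bound. The plan is this: if $e'=n+1-m_1$, then $e'<e\le n+1-m_1$ by Proposition~\ref{prop:syzygies-associated-with-points} applied to $\A$, which is impossible. Hence the same computation goes through with $m_1$ in place of $\mu$. The main obstacle is verifying the monotonicity computation carefully and making sure that the case $e'>n-e$ is genuinely excluded via du Plessis--Wall, which is the reason the theorem assumes $k=\CC$.
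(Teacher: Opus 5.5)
Your proof is correct and is essentially the paper's argument. You exclude $e_\B>n-e$ by the refined du Plessis--Wall upper bound on $\tau(\B)$; you call it the ``lower'' bound, but the inequality you actually write down is the right one. In the case $e_\B<e$ you contradict Theorem~\ref{thm:upper-bound-line-arrangements}, and your consecutive-difference monotonicity in $j$ is an unpacked version of the paper's closed-form identity with the nonnegative term $\frac{(e-e_\B-1)(2n-e-e_\B)}{2}$. The $m_1$ case is handled as in the paper. Your observation $e_\B<e\leq n+1-m_1$ also shows that your argument does not need the extra hypothesis $e\neq n+1-m_1(\A)$.
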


\begin{proof}
Set $s=e_\B$ and $u=n-e$. Since $\A$ is free and $L(\B)\simeq L(\A)$,
\[
\overline{\chi}_\B(t)
=
\overline{\chi}_\A(t)
=
(t-e)(t-u).
\]
Suppose that $\B$ is not free. By Theorem~\ref{thm:bourbaki-characteristic-polynomial}, $\Bour(\B)=(s-e)(s-u)>0.$ Since $e\leq u$, either $s<e$ or $s>u$.

We first show that $s>u$ is impossible. Write $h=u-e=n-2e\geq0$ and $q=s-u\geq1$. Then $s>n/2$, and the refined du Plessis--Wall upper bound is, by \eqref{eq:tjurina-bourbaki-arrangement}, equivalent to
\[
\Bour(\B)\geq\binom{2s+1-n}{2}.
\]
On the other hand,
\[
\Bour(\B)=q(h+q),
\qquad
\binom{2s+1-n}{2}
=
\binom{h+2q+1}{2}.
\]
But
\[
\binom{h+2q+1}{2}-q(h+q)
=
\frac{h^2+2hq+h+2q^2+2q}{2}>0,
\]
a contradiction. Hence $s<e$.

Since $\B$ is not free, Theorem~\ref{thm:upper-bound-line-arrangements} gives $\Bour(\B) \leq \binom s2-\binom{\mu-1}{2}.$ Set $c=\binom{\mu-1}{2}$. Since $s\leq e-1$,
\[
(e-s)(u-s)-\binom s2+c
=
n+c-\binom{e+1}{2} +\frac{(e-s-1)(2n-e-s)}{2}.
\]
The last term is nonnegative, while \eqref{eq:terao-triangular-condition} gives $n+c-\binom{e+1}{2}>0$. Therefore $\Bour(\B)=(e-s)(u-s)>\binom s2-c,$ contradicting the preceding upper bound. Thus $\B$ is free. Its exponents are determined by the common characteristic polynomial, so they are $(e,n-e)$.

Finally, suppose that $e\neq n+1-m_1(\A)$. By \eqref{eq:initial-degree-multiplicity-bound}, $e<n+1-m_1(\A)$. Since $s<e$ and $m_1(\B)=m_1(\A)$, one has $s\neq n+1-m_1(\B)$. Hence the maximal-multiplicity bound in Theorem~\ref{thm:upper-bound-line-arrangements} applies to $\B$. Replacing $\mu$ by $m_1(\A)$ in the preceding argument proves the second assertion.
\end{proof}

The preceding theorem yields a uniform criterion depending only on the number of lines and the smaller exponent.

\begin{Corollary}
\label{cor:uniform-terao-triangular-criterion}
Assume that $k=\CC$. Let $\A$ be an essential free arrangement of $n+1$ lines with exponents $(e,n-e)$, where $e\leq n-e$. If $\binom{e+1}{2}<n+1$, then Terao's conjecture holds for $\A$.
\end{Corollary}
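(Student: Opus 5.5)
This is a direct specialization of Theorem~\ref{thm:terao-triangular-criterion}. The plan is to show that the hypothesis $\binom{e+1}{2}<n+1$ already forces the first condition \eqref{eq:terao-triangular-condition}, whatever the value of $m_2(\A)$.

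Let $\B$ be any arrangement with $L(\B)\simeq L(\A)$. Set $\mu=m_2(\A)$; this is defined and equals $m_2(\B)$ because the multiplicities are read off from $L_2$. Since every intersection point has multiplicity at least two, $\mu\geq2$, and therefore $\binom{\mu-1}{2}\geq0$. Because the quantities involved are integers, the hypothesis $\binom{e+1}{2}<n+1$ is equivalent to $\binom{e+1}{2}\leq n$.

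Here is the one place where care is needed. Using only $\binom{\mu-1}{2}\geq0$, we get
\[
\binom{e+1}{2}\leq n\leq n+\binom{\mu-1}{2}.
\]
This is not strict in general, since the value $\mu=2$ can occur. Condition \eqref{eq:terao-triangular-condition}, however, is strict. So I will not rely on this step and will instead look directly at the proof of Theorem~\ref{thm:terao-triangular-criterion}.

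That proof reaches a contradiction from
\[
(e-s)(u-s)-\binom s2+c = n+c-\binom{e+1}{2}+\frac{(e-s-1)(2n-e-s)}{2},
\]
where $s=e_\B<e$ and $u=n-e$. It needs this quantity to be strictly positive. Suppose $\binom{e+1}{2}<n+1$ and $c\geq0$. Then $n+c-\binom{e+1}{2}\geq0$. For strict positivity I distinguish two cases.
\begin{itemize}
\item[(i)] If $\binom{e+1}{2}\leq n-1$, or $c\geq1$, then \eqref{eq:terao-triangular-condition} holds on the nose, and the theorem applies verbatim.
\item[(ii)] In the remaining case, $n=\binom{e+1}{2}$ and $\mu=2$. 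Here I would argue directly. Suppose $\B$ is not free. Then $m_2(\B)=2$, so Theorem~\ref{thm:extremal-triangular-arrangements} applies and gives $e_\B=n+1-m_1$, with $\B$ extremal. The same characteristic polynomial must then have the value $\Bour(\B)=\binom{s}{2}$ at $s$, while we also have $\Bour(\B)=(e-s)(u-s)$ with $s<e$. The identity above, with $c=0$, gives $(e-s)(u-s)-\binom s2=\frac{(e-s-1)(2n-e-s)}{2}$. This is positive unless $s=e-1$.
\end{itemize}

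The main obstacle is the boundary subcase $s=e-1$ inside case (ii). There I would use $e_\B=n+1-m_1$ to get $m_1=n+2-e$. I would then compare this with $\A$: since $\A$ is free, we have $e\leq n+1-m_1$ by \eqref{eq:initial-degree-multiplicity-bound}, that is, $m_1\leq n+1-e$. This contradicts $m_1=n+2-e$. In both cases $\B$ is therefore free, and its exponents $(e,n-e)$ are dictated by $\overline{\chi}_\B=\overline{\chi}_\A$. This proves Terao's conjecture for $\A$.
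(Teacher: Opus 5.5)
Your proof is correct. It differs from the paper's only in how the case $m_2(\A)=2$ is treated.

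The paper splits on $m_2(\A)$. When $m_2(\A)\geq3$, one has $\binom{m_2(\A)-1}{2}\geq1$, so \eqref{eq:terao-triangular-condition} holds and Theorem~\ref{thm:terao-triangular-criterion} applies. When $m_2(\A)=2$, the paper simply quotes \cite[Proposition~4.7]{Dimca-Ibadula-Macinic} for the lattice class $L(n+1,m_1(\A))$. You instead apply the theorem whenever the strict inequality holds, and handle only the boundary case $n=\binom{e+1}{2}$, $m_2(\A)=2$ by hand. This avoids the external citation.

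Two comments on your case (ii).

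First, state explicitly that $e_\B<e$ comes from the du Plessis--Wall step in the proof of Theorem~\ref{thm:terao-triangular-criterion}. That step does not use \eqref{eq:terao-triangular-condition}, and you are quoting the proof, not the theorem.

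Second, the analysis of $s=e-1$ is unnecessary. Once Theorem~\ref{thm:extremal-triangular-arrangements} gives $e_\B=n+1-m_1$, the bound \eqref{eq:initial-degree-multiplicity-bound} for $\A$ yields $e_\B\geq e$. This contradicts $e_\B<e$ at once, without using $n=\binom{e+1}{2}$.

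In fact case (ii) never occurs. If $m_2(\A)=2$, then
\[
\overline{\chi}_\A(e)=\binom e2+\binom{n-e}{2}-\binom{m_1-1}{2}\geq\binom e2,
\]
since $n-e\geq m_1-1$ by \eqref{eq:initial-degree-multiplicity-bound}. Freeness of $\A$ gives $\overline{\chi}_\A(e)=0$, which forces $e=1$. Then $n=\binom{e+1}{2}=1$ contradicts $n\geq2$.

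So the strict inequality \eqref{eq:terao-triangular-condition} already holds for every $\A$ in the corollary, and Theorem~\ref{thm:terao-triangular-criterion} applies verbatim. Neither your case (ii) nor the paper's appeal to \cite{Dimca-Ibadula-Macinic} is actually needed.
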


\begin{proof}
If $m_2(\A)=2$, then the intersection lattice has type $L(n+1,m_1(\A))$, and freeness is determined by the lattice; see \cite[Proposition~4.7]{Dimca-Ibadula-Macinic}.

Assume that $m_2(\A)\geq3$. Then $\binom{m_2(\A)-1}{2}\geq1$, and hence
\[
\binom{e+1}{2}
< n+1 \leq n+\binom{m_2(\A)-1}{2}.
\]
The conclusion follows from Theorem~\ref{thm:terao-triangular-criterion}.
\end{proof}

\begin{Remark}
\label{rem:comparison-dimca-terao}
In our notation, Dimca's criterion
\cite[Corollary~3.1]{Dimca-minimal-Tjurina} is equivalent to
\[
n\geq\binom{e+1}{2}+e-4,
\]
whereas Corollary~\ref{cor:uniform-terao-triangular-criterion}
requires only
\[
n\geq\binom{e+1}{2}.
\]
Thus the two criteria agree for $e=4$, while for every $e\geq5$
our criterion lowers the required value of $n$ by exactly $e-4$.
The first strict improvement occurs for $e=5$ and $n=15$,
corresponding to arrangements of sixteen lines.

This improvement reflects the sharper Tjurina bound obtained above:
the difference between our multiplicity correction and Dimca's
corresponding correction is exactly
\[
e-m_2(\A)\geq0;
\]
see Remark~\ref{rem:comparison-dimca-tjurina}. More importantly,
Theorem~\ref{thm:terao-triangular-criterion} retains the
lattice-theoretic multiplicity $m_2(\A)$ and, under the additional
hypothesis $e\neq n+1-m_1(\A)$, also gives a criterion involving
$m_1(\A)$. Hence it can apply even when neither uniform numerical
condition is satisfied.
\end{Remark}

\section{The Bourbaki degree under addition and deletion}
\label{sec:addition-deletion}

The behavior of the initial degree under addition and deletion is well understood by the results of Abe, Dimca and Sticlaru \cite{Abe-Dimca-Stic}; related addition--deletion results for free and plus-one generated curves were obtained in \cite{Macinic-Pokora-addition}. Here we study instead the variation of the Bourbaki degree. We obtain exact addition--deletion formulas and show that, although the Bourbaki degree itself is not monotone, the defect $\Delta(\A)$ is monotone under addition. The latter admits a geometric interpretation in terms of the intersections of the original arrangement that coalesce on the added line.

\subsection{Addition--deletion and the defect}
\label{subsec:deletion-line}

Let $\A$ be an essential arrangement of $n+1$ lines, let $H\in\A$, and set $\A'=\A\setminus\{H\}$ and $\A^H=\{H\cap L:L\in\A'\}$. Assume that $\A'$ is essential, and write $s=|\A^H|$, $e=e_\A$, and $e'=e_{\A'}$. By \cite[Proposition~2.12(1)]{Abe-Dimca-Stic}, one has $e'\leq e\leq e'+1$.

A direct count gives
\[
b_2(\A)-b_2(\A')=s,
\qquad
\tau(\A)-\tau(\A')=2n-s,
\]
and therefore
\begin{equation}
\label{eq:reduced-characteristic-deletion}
\overline{\chi}_\A(t)
=
\overline{\chi}_{\A'}(t)-t+s-1.
\end{equation}

\begin{Proposition}
\label{prop:bourbaki-deletion-formula}
With the notation above,
\begin{equation}
\label{eq:general-bourbaki-deletion-formula}
\Bour(\A)-\Bour(\A')
=
s-e-1+(e-e')(e+e'-n+1).
\end{equation}
More precisely:
\begin{enumerate}
\item if $e=e'$, then
\[
\Bour(\A)-\Bour(\A')=s-e-1\leq0,
\qquad
\Delta(\A)-\Delta(\A')=e+1-s\geq0;
\]

\item if $e=e'+1$, then
\[
\Bour(\A)-\Bour(\A')=s+e-n-1\geq0,
\qquad
\Delta(\A)-\Delta(\A')=n-s\geq0.
\]
\end{enumerate}
Consequently, $\Delta(\A)\geq\Delta(\A')$. In particular, extremality is inherited by every essential deletion. More precisely, $\A$ is extremal if and only if $\A'$ is extremal and either
\[
e=e',\qquad s=e+1,
\]
or
\[
e=e'+1,\qquad s=n.
\]
\end{Proposition}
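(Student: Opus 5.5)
The plan is to compute everything from Theorem~\ref{thm:bourbaki-characteristic-polynomial} together with the deletion identity \eqref{eq:reduced-characteristic-deletion}. The arrangement $\A'$ has $n$ lines, so its reduced characteristic polynomial is $t^2-(n-1)t+b_2(\A')-(n-1)$. Combined with $b_2(\A)=b_2(\A')+s$, this gives \eqref{eq:reduced-characteristic-deletion}. We then write
\[
\Bour(\A)-\Bour(\A')=\overline{\chi}_\A(e)-\overline{\chi}_{\A'}(e')
=\bigl(\overline{\chi}_{\A'}(e)-\overline{\chi}_{\A'}(e')\bigr)-e+s-1 .
\]
From $\overline{\chi}_{\A'}(t)=t^2-(n-1)t+\text{const}$ we get $\overline{\chi}_{\A'}(e)-\overline{\chi}_{\A'}(e')=(e-e')(e+e'-n+1)$. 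This yields \eqref{eq:general-bourbaki-deletion-formula} directly. Specializing to $e=e'$ and to $e=e'+1$, where $(e-e')(e+e'-n+1)=2e-n$, gives the two displayed values of $\Bour(\A)-\Bour(\A')$. The $\Delta$ differences then follow by subtracting from $\binom e2-\binom{e'}2$, which is $0$ or $e'=e-1$ respectively.

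The remaining work is the four sign claims. For $e=e'+1$, $\Delta(\A)-\Delta(\A')=n-s\geq0$ is immediate, since $H$ meets the $n$ lines of $\A'$ in at most $n$ points. The inequality $s+e-n-1\geq0$ is then equivalent to $n-s\leq e-1=e'$. I would derive this from Proposition~\ref{prop:syzygies-associated-with-points} applied to $\A'$. If $p\in H$ has multiplicity $m_p(\A')$, then $n-s=\sum_{p\in H}(m_p(\A')-1)$. Heuristically, a point of large multiplicity on $H$ forces $e'\leq n-m_p(\A')$. This is only a bound for a single point, though, so instead I would rewrite the inequality as $\Delta(\A)\geq 0$ computed in terms of $\Delta(\A')$. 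Indeed, $\Bour(\A)\leq\binom e2$ together with $\Bour(\A')\geq0$ gives $\Bour(\A)-\Bour(\A')\leq\binom e2$, and this is the wrong direction. The correct route is $\Bour(\A)\geq0$: it gives $\Bour(\A')+s+e-n-1\geq0$ only. So for this sign I would fall back on the Abe--Dimca--Sticlaru analysis, namely \cite[Proposition~2.12]{Abe-Dimca-Stic}, which characterizes when $e=e'+1$ in terms of $n-s$ (roughly $e'\geq n-s$, or $|\A^H|$ large). I expect this citation-matching step to be the main obstacle.

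For $e=e'$, the claim $s\leq e+1$ should similarly come from the other half of \cite[Proposition~2.12]{Abe-Dimca-Stic}: if $s\geq e'+2$, then the initial degree must increase. Alternatively, one can argue directly. Restricting a minimal syzygy of $\A$ of degree $e$ to $H$ gives a derivation tangent to $H$ of degree $e$. On $H\simeq\PP^1$ it must vanish at the $s$ points of $\A^H$, so either it vanishes identically along $H$ (and then it comes from $\A'$ with degree drop, contradicting $e=e'$) or it forces $s\leq e+1$. I would try this Ziegler-restriction-style argument first and cite the reference if it gets messy.

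Finally, monotonicity of $\Delta$ follows from the two cases. Extremality is then inherited because $0\leq\Delta(\A')\leq\Delta(\A)=0$. The characterization holds because $\Delta(\A)=0$ exactly when $\Delta(\A')=0$ and the increment vanishes, that is, $s=e+1$ in the first case and $s=n$ in the second.
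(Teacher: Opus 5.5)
Your proposal is correct and follows the paper's proof. The formula comes from Theorem~\ref{thm:bourbaki-characteristic-polynomial} and \eqref{eq:reduced-characteristic-deletion} exactly as you compute. The two sign claims are taken from Abe--Dimca--Sticlaru: the paper cites \cite[Theorem~3.3]{Abe-Dimca-Stic} for $s\le e+1$ when $e=e'$, and \cite[Proposition~2.12(3)]{Abe-Dimca-Stic} for $n-s\le e'$ when $e=e'+1$. You are also right that $\Delta(\A)-\Delta(\A')=n-s\ge 0$ needs no citation.

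Your direct restriction argument for $s\le e+1$ also works, provided ``vanishes identically along $H$'' is read projectively, i.e.\ the restriction is a multiple of the Euler derivation of $H$. In that case, subtracting a multiple of $\mathcal E$ and dividing by the equation of $H$ gives a nonzero syzygy of $\A'$ of degree $e-1$, contradicting $e=e'$.
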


\begin{proof}
By Theorem~\ref{thm:bourbaki-characteristic-polynomial} and \eqref{eq:reduced-characteristic-deletion},
\[
\begin{aligned}
\Bour(\A)-\Bour(\A')
&=
\overline{\chi}_\A(e)-\overline{\chi}_{\A'}(e')\\
&=
\overline{\chi}_{\A'}(e)-\overline{\chi}_{\A'}(e')-e+s-1.
\end{aligned}
\]
Since $\overline{\chi}_{\A'}(t)=t^2-(n-1)t+c$ for some integer $c$,
\[
\overline{\chi}_{\A'}(e)-\overline{\chi}_{\A'}(e')
=
(e-e')(e+e'-n+1),
\]
which proves \eqref{eq:general-bourbaki-deletion-formula}.

If $e=e'$, then $\Bour(\A)-\Bour(\A')=s-e-1$. Moreover, $s\geq e'+2$ would imply $e=e'+1$ by \cite[Theorem~3.3]{Abe-Dimca-Stic}; hence $s\leq e+1$.

If $e=e'+1$, then $\Bour(\A)-\Bour(\A')=s+e-n-1$. If $s<n+1-e$, then $n-s>e-1=e'$, and \cite[Proposition~2.12(3)]{Abe-Dimca-Stic} would imply $e=e'$, a contradiction. Thus $s\geq n+1-e$.

The two formulas for $\Delta$ follow immediately, and their right-hand sides are nonnegative. If $\A$ is extremal, then $0\leq\Delta(\A')\leq\Delta(\A)=0$, so $\A'$ is also extremal and the corresponding increment vanishes. The converse follows from the same formulas.
\end{proof}

The preceding monotonicity has an additive form along a filtration. Choose three nonconcurrent lines of $\A$ and order the remaining lines so that
\[
\A_3\subset\A_4\subset\cdots\subset\A_{n+1}=\A,
\qquad |\A_j|=j,
\]
where $\A_3$ is a triangle. Write $\A_j=\A_{j-1}\cup\{H_j\}$, $s_j=|\A_j^{H_j}|$, and $e_j=e_{\A_j}$.

\begin{Corollary}
\label{cor:triangular-defect-filtration}
With the notation above,
\[
\Delta(\A)
=
\sum_{\substack{4\leq j\leq n+1\\ e_j=e_{j-1}}}
(e_j+1-s_j) +
\sum_{\substack{4\leq j\leq n+1\\ e_j=e_{j-1}+1}}
(j-1-s_j).
\]
Every summand is nonnegative. Equivalently,
\[
\Bour(\A)
=
\binom{e_\A}{2}
-
\sum_{\substack{4\leq j\leq n+1\\ e_j=e_{j-1}}}
(e_j+1-s_j) -
\sum_{\substack{4\leq j\leq n+1\\ e_j=e_{j-1}+1}}
(j-1-s_j).
\]
Hence $\A$ is extremal if and only if every step of the filtration preserves $\Delta$.
\end{Corollary}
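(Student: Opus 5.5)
The plan is to telescope Proposition~\ref{prop:bourbaki-deletion-formula} along the filtration. First, each $\A_j$ with $j\geq3$ contains the triangle $\A_3$, so it is essential. Hence each pair $\A_{j-1}\subset\A_j$ satisfies the hypotheses of that proposition, with $\A_j$ playing the role of $\A$, $H_j$ the added line, and $n+1=j$, so that $n=j-1$.

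For each step $4\leq j\leq n+1$, Proposition~\ref{prop:bourbaki-deletion-formula} gives $e_j\in\{e_{j-1},e_{j-1}+1\}$ and
\[
\Delta(\A_j)-\Delta(\A_{j-1})
=
\begin{cases}
e_j+1-s_j, & e_j=e_{j-1},\\
(j-1)-s_j, & e_j=e_{j-1}+1,
\end{cases}
\]
where in the second case the quantity $n-s$ of the proposition becomes $(j-1)-s_j$. The proposition also shows that both expressions are nonnegative; these are exactly the summands in the statement.

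Next I compute $\Delta(\A_3)=0$. The triangle is free: either $e_{\A_3}=1$ and Theorem~\ref{thm:low-degree-bourbaki-classification}(1) applies, or directly $\overline{\chi}_{\A_3}(t)=t^2-2t+1=(t-1)^2$ and $e=1$. Thus $\Bour(\A_3)=0=\binom12$, and so $\Delta(\A_3)=0$.

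Summing the increments from $j=4$ to $n+1$ telescopes to $\Delta(\A)-\Delta(\A_3)=\Delta(\A)$. This gives the first displayed formula, and the expression for $\Bour(\A)$ is the same identity rewritten via $\Delta(\A)=\binom{e_\A}{2}-\Bour(\A)$.

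For the final assertion, $\Delta(\A)$ is a sum of nonnegative terms, so it vanishes if and only if every term vanishes, that is, if and only if every step preserves $\Delta$. By monotonicity this is equivalent to every $\A_j$ being extremal.

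The only point requiring care is the index translation $n\mapsto j-1$ in case (2) of Proposition~\ref{prop:bourbaki-deletion-formula}, together with checking that its essentiality hypothesis on $\A'$ holds at every step. Containing $\A_3$ guarantees the latter.
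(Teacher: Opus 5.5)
Your proposal is correct and is essentially the paper's argument: start from $\Delta(\A_3)=0$ (since $e_3=1$), apply Proposition~\ref{prop:bourbaki-deletion-formula} to each step $\A_{j-1}\subset\A_j$ with $n$ replaced by $j-1$, and telescope. Your explicit checks are exactly the details the paper leaves implicit: that every $\A_j$ is essential because it contains the triangle, and that nonnegativity of the increments yields the extremality criterion.
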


\begin{proof}
Since $\A_3$ is a triangle, $e_3=1$ and $\Delta(\A_3)=0$. Applying Proposition~\ref{prop:bourbaki-deletion-formula} at each step gives
\[
\Delta(\A_j)-\Delta(\A_{j-1})
=
\begin{cases}
e_j+1-s_j,&e_j=e_{j-1},\\
j-1-s_j,&e_j=e_{j-1}+1.
\end{cases}
\]
Summing proves the formulas and the equality characterization.
\end{proof}

Thus the Bourbaki degree itself need not be monotone under addition or deletion, while $\Delta(\A)$ is always weakly increasing under addition. The preceding formula expresses its total increase as a sum of explicit nonnegative local contributions.

\subsection{Addition of a line and collision defects}
\label{subsec:addition-line}

We now reinterpret the preceding formulas from the constructive viewpoint. Let $\A'$ be an essential arrangement of $n$ lines, let $H\notin\A'$, and set $\A=\A'\cup\{H\}$. Write $s=|\A^H|$, $e'=e_{\A'}$, and $e=e_\A$.

We measure the intersections of $\A'$ that coalesce on $H$ by the \emph{collision number}
\[
c_H:=n-s.
\]
If $\A^H=\{p_1,\ldots,p_s\}$ and $r_i$ is the number of lines of $\A'$ through $p_i$, then $c_H=\sum_{i=1}^s(r_i-1)$. Thus an old point of multiplicity $r_i$ lying on $H$ contributes $r_i-1$ collisions. In particular, $c_H=0$ precisely when $H$ contains no intersection point of $\A'$.

\begin{Proposition}
\label{prop:bourbaki-addition-collision}
With the notation above,
\[
\Bour(\A)-\Bour(\A')
=
\begin{cases}
n-e'-1-c_H,&e=e',\\ e'-c_H,&e=e'+1,
\end{cases}
\]
and
\begin{equation}
\label{eq:triangular-defect-addition-collision}
\Delta(\A)-\Delta(\A')
=
\begin{cases}
c_H-(n-e'-1),&e=e',\\
c_H,&e=e'+1.
\end{cases}
\end{equation}
Consequently,
\[
e=e'\Longrightarrow c_H\geq n-e'-1, \qquad e=e'+1\Longrightarrow c_H\leq e'.
\]
In particular,
\[
c_H\leq e'
\qquad\text{or}\qquad
c_H\geq n-e'-1.
\]
Hence, if $n\geq2e'+3$, no added line can satisfy
\[
e'+1\leq c_H\leq n-e'-2.
\]
\end{Proposition}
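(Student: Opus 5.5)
This is a direct translation of Proposition~\ref{prop:bourbaki-deletion-formula} into the collision variable. The notation matches that proposition: $\A$ has $n+1$ lines, $\A'=\A\setminus\{H\}$ is essential with $n$ lines, $s=|\A^H|$, and $e'\leq e\leq e'+1$ by \cite[Proposition~2.12(1)]{Abe-Dimca-Stic}. The plan is to substitute $s=n-c_H$ into the formulas proved there and then read off the sign constraints.

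\textbf{Step 1: Bourbaki increments.} In the case $e=e'$, Proposition~\ref{prop:bourbaki-deletion-formula}(1) gives
\[
\Bour(\A)-\Bour(\A')=s-e-1.
\]
Substituting $s=n-c_H$ and $e=e'$ yields $n-e'-1-c_H$. In the case $e=e'+1$, part (2) gives
\[
\Bour(\A)-\Bour(\A')=s+e-n-1.
\]
Substituting $s=n-c_H$ and $e=e'+1$ yields $e'-c_H$.

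\textbf{Step 2: defect increments.} For $e=e'$ the same proposition gives $\Delta(\A)-\Delta(\A')=e+1-s$, which becomes $e'+1-n+c_H=c_H-(n-e'-1)$. For $e=e'+1$ it gives $\Delta(\A)-\Delta(\A')=n-s=c_H$. This proves \eqref{eq:triangular-defect-addition-collision}.

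\textbf{Step 3: restrictions on $c_H$.} Proposition~\ref{prop:bourbaki-deletion-formula} shows that both $\Delta$-increments are nonnegative.
\begin{enumerate}
\item If $e=e'$, nonnegativity of the increment gives $c_H\geq n-e'-1$. This is equivalent to the inequality $s\leq e+1$ established there via \cite[Theorem~3.3]{Abe-Dimca-Stic}.
\item If $e=e'+1$, part (2) also gives $\Bour(\A)-\Bour(\A')\geq0$, that is, $e'-c_H\geq0$, so $c_H\leq e'$. This is the inequality $s\geq n+1-e$ from \cite[Proposition~2.12(3)]{Abe-Dimca-Stic}.
\end{enumerate}
Since $e\in\{e',e'+1\}$, one of the two alternatives $c_H\leq e'$ or $c_H\geq n-e'-1$ always holds. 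Hence no added line can have $e'+1\leq c_H\leq n-e'-2$. This interval is nonempty exactly when $e'+1\leq n-e'-2$, i.e.\ when $n\geq 2e'+3$, which is the hypothesis in the final sentence of the statement.

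\textbf{Expected obstacle.} The only point needing care is the identification of indices: the "$n$" of Proposition~\ref{prop:bourbaki-deletion-formula} refers to $\A$ having $n+1$ lines, and the present setting uses the same convention ($\A'$ has $n$ lines). Once that is checked, all steps are routine substitutions, and the cited results of Abe--Dimca--Sticlaru already supply the sign constraints.
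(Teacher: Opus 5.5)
Your proposal is correct and follows the paper's proof: you substitute $s=n-c_H$ into Proposition~\ref{prop:bourbaki-deletion-formula} and read off the sign constraints. You are also right that, in the jump case, $c_H\leq e'$ comes from the nonnegativity of the Bourbaki increment $s+e-n-1\geq0$ (that is, \cite[Proposition~2.12(3)]{Abe-Dimca-Stic}); the $\Delta$-increment there gives only the trivial bound $c_H\geq0$, a distinction the paper's one-line proof leaves implicit.
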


\begin{proof}
Since $s=n-c_H$, Proposition~\ref{prop:bourbaki-deletion-formula} gives
\[
\Bour(\A)-\Bour(\A')
=
\begin{cases}
n-e'-1-c_H,&e=e',\\ e'-c_H,&e=e'+1.
\end{cases}
\]
The formula for $\Delta$ follows in the same way. Its nonnegativity gives the stated restrictions on $c_H$, and the gap is an immediate consequence.
\end{proof}

In the jumping case, formula \eqref{eq:triangular-defect-addition-collision} has a particularly simple interpretation: every collision created on the added line contributes exactly one unit to $\Delta$. Thus generic addition is the zero-collision case.

\begin{Corollary}
\label{cor:generic-line-bourbaki-addition}
Assume that $H$ contains no intersection point of $\A'$. Then
\[
e=e'+1,\qquad
\Bour(\A)=\Bour(\A')+e',
\qquad
\Delta(\A)=\Delta(\A').
\]
Moreover, for every added line, $\Bour(\A)-\Bour(\A')\leq e'$, with equality if and only if $H$ is generic with respect to $\A'$.
\end{Corollary}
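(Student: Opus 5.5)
The proof will be a direct specialization of Proposition~\ref{prop:bourbaki-addition-collision} to the case $c_H=0$. The only statement that needs outside input is that the initial degree jumps. That is the crux, and we plan to settle it by ruling out the non-jumping case with the numerical constraint already recorded in that proposition.

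First, we show $e=e'+1$. If $H$ contains no intersection point of $\A'$, then $s=n$, so $c_H=0$. By Proposition~\ref{prop:bourbaki-addition-collision}, the case $e=e'$ forces $c_H\geq n-e'-1$, that is, $n-e'-1\leq 0$. On the other hand, $\A'$ is an essential arrangement of $n$ lines. Applying \eqref{eq:initial-degree-multiplicity-bound} to $\A'$ and using that every intersection point has multiplicity at least two gives $e'\leq n-2$, so $n-e'-1\geq1>0$. This contradiction excludes $e=e'$. Since $e'\leq e\leq e'+1$ by \cite[Proposition~2.12(1)]{Abe-Dimca-Stic}, we conclude $e=e'+1$.

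Next, substituting $c_H=0$ into the jumping-case formulas of Proposition~\ref{prop:bourbaki-addition-collision} gives $\Bour(\A)-\Bour(\A')=e'$ and $\Delta(\A)-\Delta(\A')=0$.

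For the general inequality, take an arbitrary added line; we treat the two possibilities for $e$ separately.
\begin{itemize}
\item If $e=e'+1$, the difference is $e'-c_H\leq e'$, with equality exactly when $c_H=0$.
\item If $e=e'$, the difference is $n-e'-1-c_H$, and the constraint $c_H\geq n-e'-1$ makes it at most $0$. Since $e'\geq1$ because $\A'$ is essential, this is strictly less than $e'$, so equality never occurs in this case.
\end{itemize}
Hence $\Bour(\A)-\Bour(\A')\leq e'$ always, with equality precisely when $e=e'+1$ and $c_H=0$. By the first step, $c_H=0$ already forces $e=e'+1$, so equality holds if and only if $c_H=0$, i.e., $H$ is generic with respect to $\A'$.

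The only delicate point is the exclusion of $e=e'$ in the first step, which rests on the inequality $e'\leq n-2$ for $\A'$.
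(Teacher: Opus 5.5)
Your proposal is correct and follows the paper's argument: set $c_H=0$, use $e'\leq n-2$ (the multiplicity bound applied to $\A'$) to rule out the stationary case of Proposition~\ref{prop:bourbaki-addition-collision}, and read off the remaining assertions from that proposition. Your case check for $\Bour(\A)-\Bour(\A')\leq e'$ and its equality case just makes explicit what the paper summarizes as following ``from the same proposition.''
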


\begin{proof}
Here $c_H=0$. Since $e'\leq n-2$, the stationary case in Proposition~\ref{prop:bourbaki-addition-collision} would require $0\geq n-e'-1>0$. Hence $e=e'+1$, and the assertions follow from the same proposition.
\end{proof}

We will also use the other natural defect-preserving addition.

\begin{Proposition}
\label{prop:addition-through-maximal-point}
Let $p\in L_2(\A')$ have maximal multiplicity $m=m_1(\A')$, and assume that $H$ passes through $p$ and through no other intersection point of $\A'$. Then $c_H=m-1$. Moreover:
\begin{enumerate}
\item if $e'<n-m$, then
\[
e=e'+1,\qquad
\Bour(\A)=\Bour(\A')+e'-m+1,
\qquad
\Delta(\A)=\Delta(\A')+m-1;
\]

\item if $e'=n-m$, then
\[
e=e',\qquad
\Bour(\A)=\Bour(\A'),
\qquad
\Delta(\A)=\Delta(\A').
\]
\end{enumerate}
\end{Proposition}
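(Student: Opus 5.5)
First, $c_H=m-1$: the added line $H$ meets the $m$ lines through $p$ at the single point $p$, and by hypothesis it meets every other line of $\A'$ in a point that is not an intersection point of $\A'$. Hence $\A^H$ consists of $p$ together with $n-m$ further distinct points. So $s=n-m+1$ and $c_H=n-s=m-1$.

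Once we know whether $e=e'$ or $e=e'+1$, Proposition~\ref{prop:bourbaki-addition-collision} gives the rest. If $e=e'+1$, it yields $\Bour(\A)-\Bour(\A')=e'-c_H=e'-m+1$ and $\Delta(\A)-\Delta(\A')=c_H=m-1$, which is (1). If $e=e'$, it yields $\Bour(\A)-\Bour(\A')=n-e'-1-c_H=n-e'-m$ and $\Delta(\A)-\Delta(\A')=c_H-(n-e'-1)=m-n+e'$. Both are zero exactly when $e'=n-m$, which is (2).

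So the real task is to determine $e$. In $\A$ the point $p$ has multiplicity $m+1$, and $\A$ has $n+1$ lines. Proposition~\ref{prop:syzygies-associated-with-points} therefore gives $e\le n+1-(m+1)=n-m$. We also know $e'\le n-m$ by \eqref{eq:initial-degree-multiplicity-bound} applied to $\A'$, which has $n$ lines.

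Case $e'=n-m$. Combining $e\le n-m$ with $e\ge e'$ (from $e'\le e\le e'+1$) forces $e=e'=n-m$, and (2) follows.

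Case $e'<n-m$. Here I use the constraint in Proposition~\ref{prop:bourbaki-addition-collision}: if $e=e'$, then $c_H\ge n-e'-1$, that is, $m-1\ge n-e'-1$, so $e'\ge n-m$. This contradicts $e'<n-m$, so $e=e'+1$, and (1) follows.

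This second case, ruling out the stationary case, is the only step needing an idea. It is exactly the nonnegativity of $\Delta$ built into Proposition~\ref{prop:bourbaki-addition-collision}. I would double-check the equivalent form of that constraint, $s\le e+1$ when $e=e'$ (from \cite[Theorem~3.3]{Abe-Dimca-Stic}): substituting $s=n-m+1$ gives $n-m\le e'$, the same contradiction.

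Maximality of $m$ is not actually used. It matters only for the interpretation in the extremal characterization, where it ensures $e'=n-m$ in the relevant cases via Theorem~\ref{thm:extremal-triangular-arrangements}.
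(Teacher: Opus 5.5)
Your proof is correct, but it determines $e$ by a different route from the paper's.

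The paper invokes \cite[Proposition~4.14]{Abe-Dimca-Stic}. That result directly gives $e=e'+1$ when $e'<n-m$ and $e=e'$ when $e'=n-m$, and the paper then reads off the formulas from Proposition~\ref{prop:bourbaki-addition-collision}.

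You instead derive this dichotomy from results already in the paper:
\begin{enumerate}
\item When $e'=n-m$, you apply Proposition~\ref{prop:syzygies-associated-with-points} to $\A$ itself, where $p$ has multiplicity $m+1$, to get $e\le n-m$. Together with $e\ge e'$, this forces $e=e'$.
\item When $e'<n-m$, you rule out $e=e'$ using the stationary-case constraint $c_H\ge n-e'-1$ from Proposition~\ref{prop:bourbaki-addition-collision}. That constraint is ultimately the inequality $s\le e+1$ coming from \cite[Theorem~3.3]{Abe-Dimca-Stic}.
\end{enumerate}

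Both routes rest on addition--deletion results of Abe--Dimca--Sticlaru, so neither is fully self-contained. The paper's route is a single citation. Yours uses only facts the paper had already established, with no new external input.

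Your route also shows that the maximality hypothesis on $m$ is superfluous. If $m<m_1(\A')$, then \eqref{eq:initial-degree-multiplicity-bound} gives $e'\le n-m_1(\A')<n-m$ automatically, so you are in case (1), and your argument still applies.
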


\begin{proof}
The $m$ lines through $p$ meet $H$ at $p$, while each of the remaining $n-m$ lines meets $H$ at a distinct point. Hence $c_H=m-1$. By \cite[Proposition~4.14]{Abe-Dimca-Stic}, one has $e=e'+1$ if $e'<n-m$ and $e=e'$ if $e'=n-m$. The formulas now follow from Proposition~\ref{prop:bourbaki-addition-collision}.
\end{proof}

The preceding two operations are in fact the only additions that preserve extremality.

\begin{Theorem}
\label{thm:extremal-addition-characterization}
Assume that $\A'$ is extremal. Then $\A=\A'\cup\{H\}$ is extremal if and only if exactly one of the following occurs:
\begin{enumerate}
\item $H$ contains no intersection point of $\A'$; in this case
$e=e'+1$;

\item $H$ passes through a point of maximal multiplicity
$m_1(\A')=n-e'$ and through no other intersection point of $\A'$; in this case $e=e'$.
\end{enumerate}
\end{Theorem}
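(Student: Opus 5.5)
Since $\A'$ is extremal, $\Delta(\A')=0$. Extremality of $\A$ is therefore equivalent to $\Delta(\A)-\Delta(\A')=0$. By \eqref{eq:triangular-defect-addition-collision} this means either $e=e'+1$ and $c_H=0$, or $e=e'$ and $c_H=n-e'-1$. The plan is to translate each of these two numerical conditions into the geometric conditions (1) and (2), and to check that (1) and (2) cannot hold at once.

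\emph{Jumping case.} If $e=e'+1$, extremality means $c_H=0$, that is, $H$ contains no intersection point of $\A'$. Conversely, if $H$ contains no intersection point, Corollary~\ref{cor:generic-line-bourbaki-addition} gives $e=e'+1$ and $\Delta(\A)=\Delta(\A')=0$. This is condition (1).

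\emph{Stationary case.} If $e=e'$, extremality means $c_H=n-e'-1$, and this is where the geometry is needed. First, if $\A'$ is free, then $\Bour(\A')=0=\binom{e'}2$ forces $e'\le1$. Then $\A'$ is a triangle or a near-pencil, and I will treat it by a direct check. (Here the convention that $m_1(\A')=n-e'$ must also be checked; for a near-pencil of $n$ lines one has $e'=1$ and $m=n-1$, which is consistent.)

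Next assume $\A'$ is nonfree. By Theorem~\ref{thm:extremal-triangular-arrangements}, $m_2(\A')=2$ and $m_1(\A')=n-e'$. So $\A'$ has one point $p$ of multiplicity $m=n-e'$, and every other intersection point is double. The collision number is $c_H=\sum_i(r_i-1)$, summed over the points of $\A'$ on $H$. On $H$ we have $r_i\le m$ at $p$ and $r_i\le 2$ elsewhere. The lines through $p$ meet $H$ only at $p$ (if $p\in H$) or at distinct points otherwise, which constrains the double points on $H$.

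We need to show that $c_H=m-1$ forces $H$ to pass through $p$ and through no other intersection point. Two configurations must be excluded:
\begin{itemize}
\item $H$ misses $p$ and passes through $m-1$ double points;
\item $H$ passes through $p$ and also through some double points, which would give $c_H>m-1$ and is therefore immediately impossible.
\end{itemize}
For the first configuration I use Proposition~\ref{prop:addition-through-maximal-point} style information. By \cite[Proposition~2.12(3)]{Abe-Dimca-Stic}, stationarity $e=e'$ requires the relevant count $n-s\ge$ something tied to a point of multiplicity $n-e'$ on $H$. I expect Abe--Dimca--Sticlaru's result to state precisely that $e=e'$ forces $H$ to pass through a point of $\A'$ of multiplicity $n-e'$, using the syzygy $\rho_p$ for that point.

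If that citation is unavailable, the fallback is to compute directly. If $H\not\ni p$, I would build a syzygy of degree $e'$ for $\A$ from $\rho_p$-type arguments and derive a contradiction. Alternatively, I would use Theorem~\ref{thm:upper-bound-line-arrangements} applied to $\A$: if $H$ passes through $m-1\ge2$ double points of $\A'$, these become triple points of $\A$. Then $m_2(\A)\ge3$, so $\A$ is not extremal by Theorem~\ref{thm:extremal-triangular-arrangements}, provided $\A$ is nonfree. If instead $\A$ is free, $\Bour(\A)=0=\binom{e}{2}$ forces $e\le1$, and this case is again handled directly. This fallback seems cleanest and is the step I would carry out.

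The converse in the stationary case follows from Proposition~\ref{prop:addition-through-maximal-point}(2). Finally, (1) and (2) are mutually exclusive because $c_H=0$ in (1), while $c_H=m-1\ge1$ in (2).
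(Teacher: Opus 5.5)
Your proposal is correct and, with the fallback you settle on, it follows the paper's proof. Reduce to $c_H=0$ or $c_H=n-e'-1$ via \eqref{eq:triangular-defect-addition-collision}; use $m_1(\A')=n-e'$ and $m_2(\A')=2$, from Theorem~\ref{thm:extremal-triangular-arrangements} or the $e'=1$ classification; rule out a line missing the maximal point, since it would create a second point of multiplicity at least three in the extremal arrangement $\A$; and get the converse from Corollary~\ref{cor:generic-line-bourbaki-addition} and Proposition~\ref{prop:addition-through-maximal-point}. The speculative Abe--Dimca--Sticlaru step is unnecessary, and when $m_1(\A')=2$ the single double point on $H$ is itself of maximal multiplicity, so that case falls directly under (2).
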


\begin{proof}
Assume first that $\A$ is extremal. Since $\A'$ is also extremal, Proposition~\ref{prop:bourbaki-addition-collision} gives either $e=e'+1$ and $c_H=0$, or $e=e'$ and $c_H=n-e'-1$. The first condition says exactly that $H$ contains no intersection point of $\A'$.

Consider the second case. Since $\A'$ is extremal, Theorem~\ref{thm:extremal-triangular-arrangements} gives $m_2(\A')=2$ and $m_1(\A')=n-e'$ whenever $\A'$ is nonfree. If $\A'$ is free, then, since $\A'$ is extremal,
\[
0=\Delta(\A')=\binom{e'}2,
\]
and hence $e'=1$. The corresponding statement follows from the
classification of arrangements with initial degree one
\cite[Proposition~4.7]{Dimca-Ibadula-Macinic}. Thus in all cases $m_1(\A')=n-e'$ and $m_2(\A')=2$.

Since $c_H=m_1(\A')-1$ and $\A$ is again extremal, $H$ must pass through a point of maximal multiplicity and through no other intersection point of $\A'$. Indeed, passing through any other old intersection point would create a second point of multiplicity at least three.

Conversely, a generic addition preserves $\Delta$ by Corollary~\ref{cor:generic-line-bourbaki-addition}. If $H$ passes through a point of maximal multiplicity and no other intersection point, then $e'=n-m_1(\A')$, and Proposition~\ref{prop:addition-through-maximal-point} shows that $\Delta$ is again preserved. Since $\A'$ is extremal, so is $\A$.
\end{proof}

As a consequence, extremal arrangements admit a simple constructive description.

\begin{Corollary}
\label{cor:extremal-constructive-characterization}
An essential arrangement is extremal if and only if it can be obtained from a triangle by successive additions of the following two types:
\begin{enumerate}
\item a line containing no intersection point of the arrangement
constructed so far;

\item a line through a point of maximal multiplicity and through no
other intersection point of the arrangement constructed so far.
\end{enumerate}
\end{Corollary}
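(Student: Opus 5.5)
The plan is to argue by induction on the number of lines, using Theorem~\ref{thm:extremal-addition-characterization} in both directions together with the inheritance statement of Proposition~\ref{prop:bourbaki-deletion-formula}. The base case is the triangle. It has $e=1$ and is free, so $\Bour=0=\binom12$ and it is extremal.

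\emph{Sufficiency.} Suppose $\A$ is obtained from a triangle by a sequence of additions of type (1) or (2). At each step the current arrangement $\A'$ is extremal by induction. Since the added line is of type (1) or (2), Theorem~\ref{thm:extremal-addition-characterization} shows the new arrangement is again extremal. One point needs care for a type (2) step. Theorem~\ref{thm:extremal-addition-characterization}(2) speaks of a point of maximal multiplicity $m_1(\A')=n-e'$, so I must check that this equality holds for every extremal $\A'$. This is exactly what the proof of that theorem establishes. If $\A'$ is nonfree, Theorem~\ref{thm:extremal-triangular-arrangements} gives it. If $\A'$ is free, extremality forces $e'=1$, and the classification of arrangements with initial degree one (a pencil plus one line) gives $m_1=n-1$. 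Alternatively, the converse part of Theorem~\ref{thm:extremal-addition-characterization} can be quoted directly: it already shows that adding a line through a maximal point and no other intersection point preserves $\Delta$.

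\emph{Necessity.} Let $\A$ be an extremal essential arrangement with $n+1\ge4$ lines. The key step is to find a line $H\in\A$ such that $\A'=\A\setminus\{H\}$ is still essential. Then $\A'$ is extremal by Proposition~\ref{prop:bourbaki-deletion-formula}, and by Theorem~\ref{thm:extremal-addition-characterization} the line $H$ is an addition of type (1) or (2) relative to $\A'$. Induction then expresses $\A'$ as built from a triangle, and we append $H$.

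Essentiality of some deletion is easy. If every deletion $\A\setminus\{H\}$ were a pencil, take two lines $H_1\ne H_2$. Then $\A\setminus\{H_1\}$ is concurrent at a point $p_1$ and $\A\setminus\{H_2\}$ is concurrent at a point $p_2$. With at least four lines, the common lines $\A\setminus\{H_1,H_2\}$ number at least two. If they number at least two, they meet in a single point, forcing $p_1=p_2$. Then all lines pass through this point, contradicting essentiality.

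Any choice of $H$ with essential deletion works, since Theorem~\ref{thm:extremal-addition-characterization} is an "if and only if" statement for extremal $\A'$. I expect the main obstacle to be the hidden use, inside the free case $e'=1$ of that theorem, of the classification of extremal free arrangements. I would verify it explicitly: $\A'$ is a near-pencil, so $m_1(\A')=n-1=n-e'$ and $m_2(\A')=2$.
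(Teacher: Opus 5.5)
Your proof is correct and follows the paper's argument. Sufficiency comes from repeated application of Theorem~\ref{thm:extremal-addition-characterization}, and necessity from inheritance of extremality under essential deletion (Proposition~\ref{prop:bourbaki-deletion-formula}) combined with the same theorem. The only difference is cosmetic: the paper fixes three nonconcurrent lines of $\A$ and builds a filtration $\A_3\subset\cdots\subset\A_{n+1}=\A$ from that triangle. Every intermediate arrangement is then automatically essential, so your separate argument for the existence of an essential deletion is not needed.
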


\begin{proof}
An arrangement obtained in this way is extremal by Theorem~\ref{thm:extremal-addition-characterization}.

Conversely, let $\A$ be extremal. Choose three nonconcurrent lines of $\A$ and order the remaining lines arbitrarily, obtaining a filtration
\[
\A_3\subset\A_4\subset\cdots\subset\A_{n+1}=\A.
\]
Every $\A_j$ is essential. By Proposition~\ref{prop:bourbaki-deletion-formula}, extremality is inherited by essential deletion, so every $\A_j$ is extremal. Applying Theorem~\ref{thm:extremal-addition-characterization} at each step gives the required description.
\end{proof}

\section{Combinatorial aspects of the Bourbaki degree}
\label{sec:combinatorics-bourbaki-degree}

We conclude by considering the dependence of the Bourbaki degree on the intersection lattice. Since the reduced characteristic polynomial is combinatorially determined, Theorem~\ref{thm:bourbaki-characteristic-polynomial} shows that, within a fixed lattice class, the Bourbaki degree can vary only through the initial degree of the syzygy module. This observation leads to a precise comparison, a sharp noncombinatoriality result, and a strong restriction on the exceptional case in which the initial degree varies while the Bourbaki degree remains constant.

\subsection{Combinatoriality and Ziegler pairs}
\label{subsec:combinatoriality-bourbaki}

We begin with a simple but useful identity.

\begin{Proposition}
\label{prop:bourbaki-same-lattice}
Let $\A$ and $\B$ be essential arrangements of $n+1$ lines with $L(\A)\simeq L(\B)$. Then
\[
\Bour(\A)-\Bour(\B)
=
(e_\A-e_\B)(e_\A+e_\B-n).
\]
Consequently, $\Bour(\A)=\Bour(\B)$ if and only if $e_\A=e_\B$ or $e_\A+e_\B=n$.
\end{Proposition}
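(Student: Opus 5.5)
The plan is to reduce everything to Theorem~\ref{thm:bourbaki-characteristic-polynomial}. Since $L(\A)\simeq L(\B)$, the two arrangements have the same number of lines $n+1$. They also have the same M\"obius values on rank-two flats, hence the same $b_2$. By \eqref{eq:reduced-characteristic-polynomial} this gives a common reduced characteristic polynomial
\[
\overline{\chi}_\A(t)=\overline{\chi}_\B(t)=t^2-nt+c,
\qquad c=b_2(\A)-n=b_2(\B)-n.
\]
Both arrangements are essential, so Theorem~\ref{thm:bourbaki-characteristic-polynomial} applies to each of them. We obtain $\Bour(\A)=\overline{\chi}_\A(e_\A)$ and $\Bour(\B)=\overline{\chi}_\A(e_\B)$, where the second equality uses the identification of the two polynomials.

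The difference is then a difference of values of a single monic quadratic. The constant $c$ cancels, and
\[
\Bour(\A)-\Bour(\B)=(e_\A^2-e_\B^2)-n(e_\A-e_\B)=(e_\A-e_\B)(e_\A+e_\B-n),
\]
which is the displayed identity. For the consequence, the product of two integers vanishes exactly when one factor vanishes. So $\Bour(\A)=\Bour(\B)$ holds if and only if $e_\A=e_\B$ or $e_\A+e_\B=n$.

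I expect no genuine obstacle here. The only point needing care is to state explicitly that $b_2$, and hence $\overline{\chi}$, is an invariant of the ranked lattice. This holds because $\mu_L(X_p)=m_p-1$ is computed inside $L(\A)$, as was shown in the preliminaries. The same observation also covers the free case, where $\Bour=0$, since Theorem~\ref{thm:bourbaki-characteristic-polynomial} makes no nonfreeness assumption.
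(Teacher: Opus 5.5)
Your proof is correct and is essentially the paper's own argument. Both arrangements share the reduced characteristic polynomial $t^2-nt+c$, and Theorem~\ref{thm:bourbaki-characteristic-polynomial} turns $\Bour(\A)-\Bour(\B)$ into the difference of values of this monic quadratic at $e_\A$ and $e_\B$; your remark that $b_2$ is a lattice invariant because $\mu_L(X_p)=m_p-1$ simply makes explicit a step the paper takes for granted.
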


\begin{proof}
The two arrangements have the same reduced characteristic polynomial, say
\[
\overline{\chi}_\A(t)
=
\overline{\chi}_\B(t)
=
t^2-nt+c.
\]
Hence, by Theorem~\ref{thm:bourbaki-characteristic-polynomial},
\[
\begin{aligned}
\Bour(\A)-\Bour(\B)
&=
\overline{\chi}_\A(e_\A)
-\overline{\chi}_\B(e_\B)\\ &= (e_\A-e_\B)(e_\A+e_\B-n).
\end{aligned}
\]
\end{proof}

Recent examples of Ziegler pairs show that the Bourbaki degree is not combinatorially determined. In fact, the first possible number of lines is sharp.

\begin{Theorem}
\label{thm:sharp-combinatorial-threshold}
Assume that $k=\CC$.
\begin{enumerate}
\item For essential arrangements with at most eight lines, the
Bourbaki degree is determined by the intersection lattice.

\item For every $q\geq0$, there exist essential arrangements
$\A_q$ and $\B_q$ of $9+q$ lines with $L(\A_q)\simeq L(\B_q)$ and
\[
e_{\A_q}=4+q,
\qquad
\Bour(\A_q)=2+4q+\binom q2,
\]
whereas
\[
e_{\B_q}=5+q,
\qquad
\Bour(\B_q)=3+5q+\binom q2.
\]
In particular,
\[
\Bour(\B_q)-\Bour(\A_q)=q+1.
\]
\end{enumerate}
Thus nine is the smallest number of lines for which the Bourbaki degree can fail to be combinatorially determined, and such examples exist for every number of lines at least nine.
\end{Theorem}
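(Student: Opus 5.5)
By Proposition~\ref{prop:bourbaki-same-lattice}, the Bourbaki degree is determined by the intersection lattice exactly when the initial degree is, except in the case $e_\A+e_\B=n$. So both parts reduce to statements about $e_\A$.

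For part (1), I would invoke the known result that the initial degree $e_\A$ is combinatorially determined for arrangements of at most eight lines over $\CC$. This follows from the literature on Ziegler pairs, in particular the work of Dimca and Pokora showing that the smallest Ziegler pairs have nine lines. Given that input, $e_\A=e_\B$ whenever $L(\A)\simeq L(\B)$ and $|\A|\leq 8$, and Proposition~\ref{prop:bourbaki-same-lattice} gives $\Bour(\A)=\Bour(\B)$.

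If only weaker input is available, I would argue directly. For $n+1\leq 8$ we have $n\leq 7$, and one needs to exclude $e_\A\neq e_\B$ with a common lattice. The cases $e\leq 3$ are controlled lattice-theoretically by Theorem~\ref{thm:low-degree-bourbaki-classification} and the cited classifications. Proposition~\ref{prop:syzygies-associated-with-points} gives $e\leq n+1-m_1$, and the du Plessis--Wall window of Corollary~\ref{cor:large-e-bourbaki-window} constrains the remaining cases. This route is the main obstacle, and in practice I would rely on the cited computational classification.

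For part (2), I would take the nine-line Ziegler pair $\A_0,\B_0$ of Dimca--Pokora \cite{Dimca-Pokora-nine}, which has a common lattice and $e_{\A_0}=4$, $e_{\B_0}=5$. Here $n=8$, and I would compute $\Bour=\overline{\chi}(e)$ from the common weak combinatorics. Using Proposition~\ref{prop:bourbaki-same-lattice}, the difference is $(5-4)(9-8)=1$, so it suffices to verify $\Bour(\A_0)=2$ from their $b_2$, which gives $\Bour(\B_0)=3$. If needed, this can be checked through $\overline{\chi}(t)=t^2-8t+b_2-8$, which should equal $2$ at $t=4$ and $3$ at $t=5$, i.e.\ $b_2=26$.

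For the induction, I would add to both arrangements a line that is generic with respect to both of them. A generic line avoids all intersection points, and the resulting lattices stay isomorphic, because the new line simply creates $n+1$ new double points in both. By Corollary~\ref{cor:generic-line-bourbaki-addition}, each addition raises $e$ by one and raises $\Bour$ by the old $e$. Hence $e_{\A_q}=4+q$, and
\[
\Bour(\A_q)=2+\sum_{i=0}^{q-1}(4+i)=2+4q+\binom q2,
\]
and similarly $\Bour(\B_q)=3+5q+\binom q2$. The difference is $q+1$. Both arrangements remain essential throughout. Combining (1) and (2) yields the sharpness statement.
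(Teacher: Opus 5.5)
Your proposal is correct and follows the paper's proof essentially step for step. Part (1) combines the Dimca--Pokora result that $e_\A$ is combinatorial for at most eight lines with $\Bour(\A)=\overline{\chi}_\A(e_\A)$. Part (2) takes the nine-line pair of \cite{Dimca-Pokora-nine} and performs successive generic additions via Corollary~\ref{cor:generic-line-bourbaki-addition}. The one input you left as ``should equal'', namely $b_2=26$, is supplied in the paper by the pair's common weak combinatorics $(n_2,n_3,n_4)=(9,7,1)$: this gives $b_2=9+14+3=26$, hence $\overline{\chi}(t)=t^2-8t+18$, $\Bour(\A_0)=2$ and $\Bour(\B_0)=3$.
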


\begin{proof}
For arrangements with fewer than nine lines, Dimca and Pokora proved that the initial degree is determined by the intersection lattice; see \cite{Dimca-Pokora-Ziegler}. Since the reduced characteristic polynomial is also determined by the lattice, Theorem~\ref{thm:bourbaki-characteristic-polynomial} proves (1).

For (2), consider the nine-line pair constructed in \cite{Dimca-Pokora-nine}. The two arrangements have isomorphic intersection lattices, common weak combinatorics $(n_2,n_3,n_4)=(9,7,1),$ and initial degrees $4$ and $5$. Hence $b_2=9+2\cdot7+3=26,$ so their common reduced characteristic polynomial is $\overline{\chi}(t)=t^2-8t+18.$ Therefore their Bourbaki degrees are respectively
\[
\overline{\chi}(4)=2,
\qquad
\overline{\chi}(5)=3.
\]

Now add successively $q$ generic lines to each arrangement. Generic addition preserves the isomorphism type of the intersection lattice, and Corollary~\ref{cor:generic-line-bourbaki-addition} shows that at each step the initial degree increases by one and the Bourbaki degree increases by the preceding initial degree. Thus
\[
\Bour(\A_q)
=
2+\sum_{i=0}^{q-1}(4+i)
=
2+4q+\binom q2,
\]
and similarly
\[
\Bour(\B_q)
=
3+\sum_{i=0}^{q-1}(5+i)
=
3+5q+\binom q2.
\]
This proves the result.
\end{proof}

\subsection{Modularity defects}
\label{subsec:modularity-defects}

We next relate the Bourbaki degree to the failure of a point to be modular. For $p\in L_2(\A)$, define
\[
\delta_p(\A) :=
\sum_{\substack{
q\in L_2(\A)\setminus\{p\}\\
\overline{pq}\notin\A}}
(m_q-1),
\]
where $\overline{pq}$ denotes the line joining $p$ and $q$. Thus $\delta_p(\A)=0$ precisely when $p$ is modular.

The following factorization gives a direct relation between this combinatorial quantity and the Bourbaki degree.

\begin{Proposition}
\label{prop:characteristic-polynomial-modularity-defect}
Let $\A$ be an essential arrangement of $n+1$ lines and let $p\in L_2(\A)$ have multiplicity $m$. Then
\[
b_2(\A)
=
(m-1)+m(n+1-m)+\delta_p(\A),
\]
and
\begin{equation}
\label{eq:characteristic-modularity-defect}
\overline{\chi}_\A(t)
=
(t-m+1)\bigl(t-(n+1-m)\bigr)+\delta_p(\A).
\end{equation}
Consequently,
\begin{equation}
\label{eq:bourbaki-modularity-defect}
\Bour(\A)
=
(e_\A-m+1)\bigl(e_\A-(n+1-m)\bigr)+\delta_p(\A).
\end{equation}
\end{Proposition}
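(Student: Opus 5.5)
The plan is to obtain the formula for $b_2(\A)$ by splitting the sum $b_2(\A)=\sum_{q\in L_2(\A)}(m_q-1)$ according to how each point $q\neq p$ sits relative to $p$. The identities for $\overline{\chi}_\A$ and for $\Bour(\A)$ then follow by substitution into \eqref{eq:reduced-characteristic-polynomial} and Theorem~\ref{thm:bourbaki-characteristic-polynomial}.

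\textbf{Step 1: the decomposition.} Write
\[
b_2(\A)=(m-1)+\sum_{\substack{q\neq p\\ \overline{pq}\in\A}}(m_q-1)+\sum_{\substack{q\neq p\\ \overline{pq}\notin\A}}(m_q-1).
\]
The first term is the contribution of $p$ itself, and the last sum is $\delta_p(\A)$ by definition. It therefore suffices to show that the middle sum equals $m(n+1-m)$.

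\textbf{Step 2: the middle sum.} A point $q\neq p$ with $\overline{pq}\in\A$ lies on exactly one of the $m$ lines through $p$, namely $L=\overline{pq}$. The remaining $m_q-1$ lines through $q$ do not pass through $p$. Hence
\[
\sum_{\substack{q\neq p\\ \overline{pq}\in\A}}(m_q-1)=\sum_{L\ni p}\ \sum_{q\in L\setminus\{p\}}(m_q-1).
\]
For a fixed line $L\ni p$, each of the $n+1-m$ lines not through $p$ meets $L$ in exactly one point, and that point is different from $p$. Conversely, the lines through a point $q\in L\setminus\{p\}$ other than $L$ are exactly $m_q-1$ lines of this kind. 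So the inner sum equals $n+1-m$, and summing over the $m$ lines through $p$ gives $m(n+1-m)$. This double count is the only real content of the proof. The one care point is that a point $q$ with $\overline{pq}\notin\A$ lies on no line through $p$, so it is correctly excluded from this count.

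\textbf{Step 3: substitution.} Insert the formula for $b_2(\A)$ into $\overline{\chi}_\A(t)=t^2-nt+b_2(\A)-n$. The constant term becomes $m(n+1-m)-(n+1-m)+\delta_p(\A)=(m-1)(n+1-m)+\delta_p(\A)$. Since $(m-1)+(n+1-m)=n$, this gives
\[
\overline{\chi}_\A(t)=(t-m+1)\bigl(t-(n+1-m)\bigr)+\delta_p(\A),
\]
which is \eqref{eq:characteristic-modularity-defect}. Finally, evaluating at $t=e_\A$ and applying Theorem~\ref{thm:bourbaki-characteristic-polynomial} yields \eqref{eq:bourbaki-modularity-defect}.
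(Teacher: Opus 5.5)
Your proposal is correct and follows essentially the same route as the paper. You use the same three-way split of $b_2(\A)$ and the same double count of pairs (line through $p$, line not through $p$) giving $m(n+1-m)$, merely organized line by line over the $m$ lines through $p$, followed by the same substitution into $\overline{\chi}_\A(t)=t^2-nt+b_2(\A)-n$ and evaluation at $t=e_\A$.
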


\begin{proof}
Let $\A_p$ be the set of the $m$ lines of $\A$ passing through $p$. If $q\neq p$ and $\overline{pq}\in\A$, then exactly one line through $q$ belongs to $\A_p$, while the remaining $m_q-1$ lines through $q$ do not pass through $p$.

Every pair consisting of a line through $p$ and a line not through $p$ meets at a unique point $q\neq p$ with $\overline{pq}\in\A$. Conversely, such a point $q$ accounts for exactly $m_q-1$ of these pairs. Hence
\[
\sum_{\substack{q\neq p\\ \overline{pq}\in\A}}
(m_q-1)
=
m(n+1-m).
\]
Separating the contribution of $p$, the points joined to $p$ by a line of $\A$, and the remaining points gives the formula for $b_2(\A)$. Since $\overline{\chi}_\A(t)=t^2-nt+b_2(\A)-n,$ we obtain \eqref{eq:characteristic-modularity-defect}. Evaluating at $t=e_\A$ and applying Theorem~\ref{thm:bourbaki-characteristic-polynomial} gives \eqref{eq:bourbaki-modularity-defect}.
\end{proof}

The most natural case is when the syzygy associated with $p$ has minimal degree.

\begin{Corollary}
\label{cor:bourbaki-modularity-defect}
Let $p\in L_2(\A)$ have multiplicity $m$ and assume that
\[
e_\A=n+1-m.
\]
Then $m=m_1(\A)$ and
\[
\Bour(\A)=\delta_p(\A).
\]
In particular:
\begin{enumerate}
\item if $\delta_p(\A)=0$, then $\A$ is free;
\item if $\delta_p(\A)=1$, then $\A$ is nearly free;
\item if $\delta_p(\A)=2$, then $\A$ is plus-one generated but not
nearly free.
\end{enumerate}
\end{Corollary}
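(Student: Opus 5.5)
The plan is to first settle $m=m_1(\A)$, then read off the Bourbaki degree from the modularity factorization, and finally translate small values of $\Bour(\A)$ into the syzygy-theoretic conditions recalled in the preliminaries.

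For the first point, I would use \eqref{eq:initial-degree-maximal-multiplicity}, which gives $e_\A\leq n+1-m_1(\A)$. Combined with the hypothesis $e_\A=n+1-m$, this yields $m\geq m_1(\A)$. Since $m\leq m_1(\A)$ by definition of the maximal multiplicity, we get $m=m_1(\A)$.

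Next I would substitute $e_\A=n+1-m$ into \eqref{eq:bourbaki-modularity-defect} of Proposition~\ref{prop:characteristic-polynomial-modularity-defect}. The factor $e_\A-(n+1-m)$ vanishes, so the product term drops out and $\Bour(\A)=\delta_p(\A)$.

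The three consequences then follow from the dictionary between small Bourbaki degrees and syzygy types.
\begin{enumerate}
\item If $\delta_p(\A)=0$, then $\Bour(\A)=0$. Because a line arrangement with an intersection point is singular, the preliminaries say that $\Bour=0$ forces freeness; this is also immediate from the remark after Theorem~\ref{thm:bourbaki-characteristic-polynomial} that $\A$ is free if and only if $\overline{\chi}_\A(e)=0$.
\item If $\delta_p(\A)=1$, then $\Bour(\A)=1$, and the same remark (or the preliminaries) identifies this with nearly freeness.
\item If $\delta_p(\A)=2$, then $\Bour(\A)=2$, so $\A$ is not free. Proposition~\ref{prop:type-bourbaki-multiplicity-sandwich} gives $\binom{t(\A)+1}{2}\leq 2$, hence $t(\A)=1$, and $\A$ is plus-one generated by \cite{Abe-Dimca-Pokora-hierarchy}. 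It is not nearly free because nearly freeness corresponds to $\Bour=1$.
\end{enumerate}

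I see no real obstacle here. The only step needing care is the plus-one generated claim in (3), and it reuses the type argument already given in the proof of Theorem~\ref{thm:low-degree-bourbaki-classification}. Alternatively, one can quote the preliminary statement that $\Bour=2$ characterizes plus-one generated curves with exponents $(e,d-e+1,d-e+2)$.
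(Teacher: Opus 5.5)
Your proposal is correct and follows the paper's proof essentially step for step. The bound $e_\A\le n+1-m_1(\A)$ forces $m=m_1(\A)$. The vanishing factor $e_\A-(n+1-m)$ reduces \eqref{eq:bourbaki-modularity-defect} to $\Bour(\A)=\delta_p(\A)$. The case $\delta_p(\A)=2$ is settled, exactly as in the paper, by $\binom{t(\A)+1}{2}\le 2$ from Proposition~\ref{prop:type-bourbaki-multiplicity-sandwich}.
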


\begin{proof}
The bound \eqref{eq:initial-degree-multiplicity-bound} gives $e_\A\leq n+1-m_1(\A)$. Since $e_\A=n+1-m$, one obtains $m\geq m_1(\A)$, and hence $m=m_1(\A)$. Formula \eqref{eq:bourbaki-modularity-defect} now gives $\Bour(\A)=\delta_p(\A)$.

The first two assertions follow from the characterizations of free and nearly free arrangements by Bourbaki degrees zero and one. If $\delta_p(\A)=2$, then $\Bour(\A)=2$. Since $\A$ is nonfree, $t(\A)\geq1$, while Proposition~\ref{prop:type-bourbaki-multiplicity-sandwich} gives $\binom{t(\A)+1}{2}\leq2.$ Thus $t(\A)=1$, so $\A$ is plus-one generated. Since its Bourbaki degree is not one, it is not nearly free.
\end{proof}

\subsection{The exceptional symmetric case}
\label{subsec:exceptional-symmetric-case}

Proposition~\ref{prop:bourbaki-same-lattice} shows that two realizations of the same intersection lattice can have different initial degrees but the same Bourbaki degree only when the two initial degrees are symmetric about $n/2$. The next theorem places strong restrictions on this exceptional case.

\begin{Theorem}
\label{thm:symmetric-bourbaki-obstruction}
Assume that $k=\CC$. Let $\A$ and $\B$ be essential arrangements of $n+1$ lines with $L(\A)\simeq L(\B)$, and assume that
\[
e_\A\neq e_\B,
\qquad
\Bour(\A)=\Bour(\B).
\]
Set
\[
q:=\min\{e_\A,e_\B\}, \qquad b:=\Bour(\A)=\Bour(\B),
\]
and write $m_1=m_1(\A)=m_1(\B)$. Then
\[
\{e_\A,e_\B\}=\{q,n-q\},
\]
both arrangements are nonfree, $b>0$, and
\[
m_1\leq q<n+1-m_1.
\]
Moreover,
\begin{equation}
\label{eq:symmetric-bourbaki-bounds}
\binom{n-2q+1}{2}
\leq b \leq
\binom q2-\binom{m_1-1}{2}.
\end{equation}
Consequently,
\begin{equation}
\label{eq:symmetric-bourbaki-obstruction}
\binom{n-2q+1}{2}
+
\binom{m_1-1}{2}
\leq
\binom q2,
\end{equation}
and
\[
\frac{n+1}{3}<q<\frac n2,
\qquad 2m_1<n.
\]
\end{Theorem}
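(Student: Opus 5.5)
The plan is to combine Proposition~\ref{prop:bourbaki-same-lattice} with the upper bound of Theorem~\ref{thm:upper-bound-line-arrangements}, applied to the realization with the smaller initial degree. The lower bound will come from the du Plessis--Wall window of Corollary~\ref{cor:large-e-bourbaki-window}, applied to the realization with the larger initial degree. Without loss of generality take $e_\A=q<e_\B$.

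\emph{Symmetry and nonfreeness.} Since $e_\A\neq e_\B$ and $\Bour(\A)=\Bour(\B)$, Proposition~\ref{prop:bourbaki-same-lattice} forces $e_\A+e_\B=n$. Hence $e_\B=n-q$ and $q<n/2$.

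Suppose $b=0$. Then both arrangements are free. By Theorem~\ref{thm:bourbaki-characteristic-polynomial}, $\overline{\chi}(t)=(t-q)(t-(n-q))$, and Terao's factorization then says that both have exponents $\{q,n-q\}$. The initial degree of a free arrangement is its smaller exponent, so $e_\B=q$, a contradiction. Thus $b>0$ and both arrangements are nonfree.

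\emph{Multiplicity constraints and the upper bound.} Applying \eqref{eq:initial-degree-maximal-multiplicity} to $\B$ gives $n-q\leq n+1-m_1$. Since $q<n-q$, this yields $q<n+1-m_1$; in particular $e_\A\neq n+1-m_1(\A)$. The maximal-multiplicity parts of Theorem~\ref{thm:upper-bound-line-arrangements} therefore apply to $\A$. First, \eqref{eq:type-maximal-multiplicity} together with $t(\A)\geq1$ gives $m_1\leq q$. Second, the Bourbaki bound gives $b\leq\binom q2-\binom{m_1-1}{2}$.

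\emph{Lower bound.} Since $e_\B=n-q>n/2$ and $\B$ is nonfree, Corollary~\ref{cor:large-e-bourbaki-window} gives
\[
b\geq\binom{2(n-q)+1-n}{2}=\binom{n-2q+1}{2}.
\]
Combining the two bounds proves \eqref{eq:symmetric-bourbaki-bounds} and hence \eqref{eq:symmetric-bourbaki-obstruction}. The inequality $2m_1<n$ follows from $m_1\leq q<n/2$.

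\emph{Main obstacle: the strict inequality $q>(n+1)/3$.} From \eqref{eq:symmetric-bourbaki-obstruction} one gets only $\binom{n-2q+1}{2}\leq\binom q2$, that is, $n-2q+1\leq q$, which is not strict. The plan is to split on $m_1$.

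If $m_1\geq3$, then $\binom{m_1-1}{2}\geq1$, so $\binom{n-2q+1}{2}<\binom q2$. This forces $n-2q+1<q$, i.e.\ $q>(n+1)/3$.

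If $m_1=2$, the common lattice is generic. By Example~\ref{ex:generic-arrangements-bourbaki} every realization then has $e=n-1$, which contradicts $e_\A\neq e_\B$. So this case does not occur.

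This is the step requiring care; the remaining steps are direct substitutions into results already established.
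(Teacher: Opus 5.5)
Your proposal is correct and follows the paper's proof essentially step by step. You use the symmetry $e_\A+e_\B=n$ from Proposition~\ref{prop:bourbaki-same-lattice}, the exponent argument for $b>0$, the maximal-multiplicity bound of Theorem~\ref{thm:upper-bound-line-arrangements} for the realization with initial degree $q$, Corollary~\ref{cor:large-e-bourbaki-window} for the one with initial degree $n-q$, and the exclusion of the generic lattice for $q>(n+1)/3$. The differences are cosmetic: you get $m_1\leq q$ from the type bound $1\leq t(\A)\leq q-m_1+1$ rather than from $b>0$ excluding $m_1=q+1$, and you split on $m_1$ rather than arguing by contradiction from $q\leq(n+1)/3$.
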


\begin{proof}
By Proposition~\ref{prop:bourbaki-same-lattice}, $e_\A+e_\B=n$. After interchanging $\A$ and $\B$ if necessary, we may assume $e_\A=q, \qquad e_\B=n-q, \qquad q<n-q.$

If $b=0$, then both arrangements are free. Their common reduced characteristic polynomial would then have roots $q$ and $n-q$, and the initial degree of a free arrangement is the smaller root $q$. This contradicts $e_\B=n-q>q$. Hence $b>0$, and both arrangements are nonfree.

Applying \eqref{eq:initial-degree-multiplicity-bound} to $\B$ gives $n-q\leq n+1-m_1,$ so $m_1\leq q+1$. Moreover, $q\neq n+1-m_1$, since otherwise the same inequality would give $n-q\leq q$, a contradiction. Therefore the maximal-multiplicity estimate in Theorem~\ref{thm:upper-bound-line-arrangements} applies to $\A$ and gives $b\leq \binom q2-\binom{m_1-1}{2}.$ Since $b>0$, this also rules out $m_1=q+1$, and hence $m_1\leq q$. Together with \eqref{eq:initial-degree-multiplicity-bound} for $\A$ and the inequality $q\neq n+1-m_1$, this gives $m_1\leq q<n+1-m_1.$

Since $n-q>n/2$, Corollary~\ref{cor:large-e-bourbaki-window} applied to $\B$ gives
\[
b\geq
\binom{2(n-q)+1-n}{2}
=
\binom{n-2q+1}{2}.
\]
This proves \eqref{eq:symmetric-bourbaki-bounds} and \eqref{eq:symmetric-bourbaki-obstruction}.

It remains to determine the range of $q$. We already have $q<n/2$. Suppose that $q\leq(n+1)/3$. Then $n-2q+1\geq q$, and hence $\binom{n-2q+1}{2}\geq\binom q2.$ Together with \eqref{eq:symmetric-bourbaki-bounds}, this forces $m_1=2$ and $n+1=3q$. But $m_1=2$ means that all intersection points are double, so the arrangement is generic and has initial degree $n-1$, a contradiction. Thus $q>(n+1)/3$. Finally, $m_1\leq q<n/2$ gives $2m_1<n$.
\end{proof}

The first possible exceptional case is therefore very small.

\begin{Corollary}
\label{cor:first-symmetric-bourbaki-case}
Assume that $k=\CC$. If two essential arrangements with isomorphic intersection lattices have different initial degrees but the same Bourbaki degree, then they have at least ten lines.

If they have exactly ten lines, then their initial degrees are $4$ and $5$, and their common maximal intersection multiplicity is either $3$ or $4$. If $b$ denotes their common Bourbaki degree, then
\[
1\leq b\leq5 \quad\text{when }m_1=3, \qquad 1\leq b\leq3 \quad\text{when }m_1=4.
\]
\end{Corollary}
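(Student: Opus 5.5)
The plan is to derive everything from Theorem~\ref{thm:symmetric-bourbaki-obstruction}, working purely numerically. Here $n+1$ is the number of lines.

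\emph{Step 1: at least ten lines.} The theorem gives $\frac{n+1}{3}<q<\frac n2$ with $q$ an integer, and $2\leq m_1\leq q$. We also know that $m_1=2$ forces the arrangement to be generic, which is excluded exactly as in the proof of the theorem; hence $m_1\geq3$ and $q\geq3$.
\begin{itemize}
\item From $q<n/2$ we get $n\geq2q+1\geq7$. Combining with $q>(n+1)/3$ gives $3q>n+1\geq2q+2$, so $q\geq3$ is consistent.
\item We then check each small value of $n$ against the open interval $\bigl(\frac{n+1}{3},\frac n2\bigr)$ together with inequality~\eqref{eq:symmetric-bourbaki-obstruction}.
\item For $n=7$ the interval is $(8/3,3.5)$, so $q=3$. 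Then $\binom{2}{2}+\binom{m_1-1}{2}\leq3$ with $m_1\in\{3\}$ gives $1+1\leq3$, which survives.
\end{itemize}
So the inequality alone does not exclude eight lines. The cleaner route is to invoke Theorem~\ref{thm:sharp-combinatorial-threshold}(1): for at most eight lines the Bourbaki degree is combinatorial. Combined with the Dimca--Pokora fact that $e_\A$ is combinatorial below nine lines, $e_\A\neq e_\B$ is impossible for at most eight lines.

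\emph{Nine lines ($n=8$).} The interval $(3,4)$ contains no integer, so this case is excluded. Hence at least ten lines are needed.

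\emph{Step 2: exactly ten lines ($n=9$).} The interval is $(10/3,4.5)$, so $q=4$ and the initial degrees are $\{4,5\}$. Then $3\leq m_1\leq q=4$; these are the only values, since $2m_1<9$ holds automatically.

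\emph{Step 3: bounds on $b$.} From \eqref{eq:symmetric-bourbaki-bounds} we have $b\geq\binom{2}{2}=1$, and also $b>0$. For the upper bound, $b\leq\binom42-\binom{m_1-1}{2}$:
\begin{itemize}
\item if $m_1=3$, this gives $b\leq6-1=5$;
\item if $m_1=4$, this gives $b\leq6-3=3$.
\end{itemize}

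No step is a real obstacle. The only care point is the case of at most eight lines, where the numerical constraints alone are insufficient, so the combinatorial invariance of $e$ from \cite{Dimca-Pokora-Ziegler} must be used there.
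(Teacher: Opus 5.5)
Your proposal is correct and follows essentially the same route as the paper. Dimca--Pokora's combinatorial invariance of the initial degree rules out at most eight lines, the empty interval $(3,4)$ rules out nine, and for ten lines Theorem~\ref{thm:symmetric-bourbaki-obstruction} forces $q=4$ and $m_1\in\{3,4\}$ (after excluding the generic lattice), with the bounds on $b$ coming from \eqref{eq:symmetric-bourbaki-bounds}.

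The only superfluous point is your appeal to Theorem~\ref{thm:sharp-combinatorial-threshold}(1). Combinatoriality of the Bourbaki degree is compatible with the hypothesis, so the contradiction comes from the invariance of $e$ alone.
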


\begin{proof}
For arrangements with fewer than nine lines, the initial degree is determined by the intersection lattice \cite{Dimca-Pokora-Ziegler}. For nine lines, one has $n=8$, while Theorem~\ref{thm:symmetric-bourbaki-obstruction} would require an integer $q$ satisfying $3<q<4,$ which is impossible. Hence at least ten lines are necessary.

For ten lines, $n=9$, and the same theorem gives
\[
\frac{10}{3}<q<\frac92,
\]
so $q=4$ and the other initial degree is $5$. Moreover, $m_1\leq4$. The case $m_1=2$ is the generic lattice and cannot have two different initial degrees. Hence $m_1\in\{3,4\}$. The stated bounds for $b$ follow immediately from \eqref{eq:symmetric-bourbaki-bounds}.
\end{proof}

We can sharpen the ten-line case considerably. The following result shows that any exceptional pair with initial degrees $4$ and $5$ is forced into a very small number of weak-combinatorial types.

\begin{Proposition}
\label{prop:ten-line-symmetric-reduction}
Let $\A$ and $\B$ be essential arrangements of ten lines over $\CC$ such that $L(\A)\simeq L(\B)$, $e_\A=4$, and $e_\B=5$. Set $b:=\Bour(\A)=\Bour(\B)$. Then
\[
m_1(\A)=m_1(\B)=4,
\qquad b\in\{1,2\}.
\]
Moreover, for every $H\in\A$, one has $e_{\A\setminus\{H\}}=4$, and the collision number of $H$ relative to $\A\setminus\{H\}$ satisfies $c_H\geq4$.

If $n_j$ denotes the common number of $j$-fold points, then the only possible weak combinatorics are
\[
\begin{array}{c|c}
b & (n_2,n_3,n_4)\\ \hline 1 & (3,12,1),\ (6,9,2),\ (9,6,3),\ (12,3,4),\ (15,0,5),\\[1mm] 2 & (6,11,1),\ (9,8,2).
\end{array}
\]
In particular, when $b=1$ both arrangements are nearly free, whereas when $b=2$ both are plus-one generated but not nearly free.
\end{Proposition}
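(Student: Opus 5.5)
The plan is to combine the deletion formulas of Proposition~\ref{prop:bourbaki-deletion-formula}, applied line by line to $\A$ and $\B$ simultaneously, with a counting argument on the weak combinatorics.

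First set up the numerics. Here $n=9$ and $q=4$. Corollary~\ref{cor:first-symmetric-bourbaki-case} already gives $m_1\in\{3,4\}$, $b\geq1$, and $b\leq 6-\binom{m_1-1}{2}$. Since both arrangements share $b_2$, Theorem~\ref{thm:bourbaki-characteristic-polynomial} gives
\[
b=\overline{\chi}(4)=b_2-29 .
\]
Pair counting gives $n_2+3n_3+6n_4=45$ and $b_2=n_2+2n_3+3n_4$. Because $m_1\leq4$, no other $n_j$ occur, and eliminating $n_2$ yields
\[
b=16-n_3-3n_4 .
\]

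The key step is to show that $e_{\A\setminus\{H\}}=4$ for every $H\in\A$. Fix $H$, let $H_\B$ be the corresponding line of $\B$, and put $\A'=\A\setminus\{H\}$, $\B'=\B\setminus\{H_\B\}$. These are essential nine-line arrangements with isomorphic lattices and the same $s=|\A^H|$. By \cite[Proposition~2.12]{Abe-Dimca-Stic}, $e_{\A'}\in\{3,4\}$ and $e_{\B'}\in\{4,5\}$.

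Suppose $e_{\A'}=3$. If $\A'$ is nonfree, Theorem~\ref{thm:low-degree-bourbaki-classification} (the case $n=8\neq 6$) forces $m_1(\A')=6$. Then $m_1(\A)\geq6>q$, contradicting $m_1\leq q$. If instead $\A'$ is free, its exponents are $(3,5)$, so $\overline{\chi}_{\B'}(t)=(t-3)(t-5)$. For $e_{\B'}=4$ this gives a negative Bourbaki degree, which is impossible. For $e_{\B'}=5$ it gives $\Bour(\B')=0$, so $\B'$ would be free with initial degree $3$, again a contradiction. Hence $e_{\A'}=e_\A=4$. Proposition~\ref{prop:bourbaki-deletion-formula}(1) then gives $s\leq e+1=5$, that is, $c_H=9-s\geq4$. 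I expect this step to be the main obstacle: it needs the low-degree classification, and it needs the lattice isomorphism to be used on the deleted pair $\A',\B'$.

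Next translate the collision bound into a condition on the points of $\A$. For a point $p\in H$ of multiplicity $m_p$ in $\A$, its contribution to $c_H$ relative to $\A'$ is $m_p-2$, so
\[
\sum_{p\in H}(m_p-2)\geq4 \quad\text{for every } H\in\A .
\]
Summing over the ten lines gives
\[
\sum_p m_p(m_p-2)=3n_3+8n_4\geq40 .
\]
If $m_1=3$, each line would contain at least $4$ triple points. It also contains at most $4$, since the other $9$ lines pair off at triple points on $H$. Hence $n_3=40/3$, which is absurd, so $m_1=4$ and $n_4\geq1$. Combining $3n_3+8n_4\geq40$ with $n_3+3n_4=16-b$ gives $3(16-b)-n_4\geq40$.

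This inequality excludes $b=3$. For $b=2$ it forces $n_4\leq2$, giving $(n_3,n_4)\in\{(11,1),(8,2)\}$. For $b=1$ it forces $n_4\leq5$, giving $n_4=1,\dots,5$ with $n_3=15-3n_4$. In each case $n_2=45-3n_3-6n_4$, and these are exactly the seven listed types.

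Finally, $b=1$ means both arrangements are nearly free. For $b=2$, Proposition~\ref{prop:type-bourbaki-multiplicity-sandwich} gives $\binom{t+1}{2}\leq2$, so $t=1$. Hence both arrangements are plus-one generated, and they are not nearly free because $b\neq1$.
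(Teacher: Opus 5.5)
Your proof is correct and follows the same overall plan as the paper. You show $e_{\A\setminus\{H\}}=4$ for every $H$, deduce $c_H\geq4$ from the stationary case of the deletion formula, sum $\sum_{p\in H}(m_p-2)\geq4$ over the ten lines to get $3n_3+8n_4\geq40$, exclude $m_1=3$, and enumerate.

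The differences are local:
\begin{itemize}
\item \emph{Free subcase with $e_{\A'}=3$.} The paper invokes its Terao-type criterion, Corollary~\ref{cor:uniform-terao-triangular-criterion}, using $\binom42<9$. That criterion rests on the refined du Plessis--Wall bound and, when $m_2=2$, on the classification of the lattices $L(n+1,m)$. You instead evaluate $\overline{\chi}_{\B'}(t)=(t-3)(t-5)$ at the only two admissible values $e_{\B'}\in\{4,5\}$. This gives either a negative Bourbaki degree or a free arrangement whose smaller exponent would have to be $3$. Your route is more elementary and self-contained. It works because the Abe--Dimca--Sticlaru constraint already rules out $e_{\B'}>5$, which is exactly the range where the paper's criterion needs du Plessis--Wall.
\item \emph{Nonfree subcase.} You reach the contradiction on the $\A$ side, via $m_1(\A)\geq m_1(\A')=6>4$. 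The paper instead transfers $m_1=6$ to $\B'$ to get $e_{\B'}\leq3$. Both arguments are fine.
\item \emph{Excluding $m_1=3$.} You note that $c_H=t_H$ together with $2t_H+d_H=9$ forces $t_H=4$ on every line, so $3n_3=40$, which is impossible. This is slightly more direct than the paper's argument. The paper uses parity: $d_H$ is odd, hence $n_2\geq5$ and $b\geq3$, which contradicts $b\leq2$ coming from $3n_3\geq40$.
\end{itemize}
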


\begin{proof}
By Corollary~\ref{cor:first-symmetric-bourbaki-case}, $m_1(\A)=m_1(\B)\in\{3,4\}$. In particular, every deletion considered below is essential.

We first show that every deletion of $\A$ has initial degree $4$. Fix $H\in\A$, let $H'$ be the corresponding line of $\B$, and set $\A'=\A\setminus\{H\}$ and $\B'=\B\setminus\{H'\}$. By addition--deletion,
\[
e_{\A'}\in\{3,4\},
\qquad
e_{\B'}\in\{4,5\}.
\]
Suppose that $e_{\A'}=3$. If $\A'$ is free, then Corollary~\ref{cor:uniform-terao-triangular-criterion} applies, since $\A'$ has nine lines and $\binom42=6<9$. Hence $\B'$ is free with the same exponents, contradicting $e_{\B'}\geq4$.

If $\A'$ is not free, then Theorem~\ref{thm:low-degree-bourbaki-classification} gives $m_1(\A')=6$. Since $\A'$ and $\B'$ have the same intersection lattice, $m_1(\B')=6$, and \eqref{eq:initial-degree-multiplicity-bound} gives $e_{\B'}\leq9-6=3$, again a contradiction. Thus $e_{\A'}=4$ for every $H\in\A$.

Adding $H$ back is therefore a stationary addition. By Proposition~\ref{prop:bourbaki-addition-collision}, $c_H\geq9-4-1=4$ for every $H\in\A$.

Suppose now that $m_1=3$. Then only double and triple points occur. Since $\overline{\chi}_\A(4)=b$, the pair-counting identity gives $n_3=16-b, \qquad n_2=3(b-1).$ If a line contains $d_H$ double points and $t_H$ triple points, then $d_H+2t_H=9$, so $d_H$ is a positive odd integer. Hence $2n_2\geq10$, and therefore $b\geq3$.

On the other hand, in this case $c_H=t_H$, so
\[
3n_3=\sum_{H\in\A}c_H\geq40.
\]
Thus $3(16-b)\geq40$, which gives $b\leq2$, a contradiction. Therefore $m_1=4$.

Hence only double, triple and quadruple points occur, and the pair-counting identity together with $\overline{\chi}_\A(4)=b$ gives $n_3+3n_4=16-b, \qquad n_2=3(b-1+n_4).$ Moreover,
\[
\sum_{H\in\A}c_H
=
3n_3+8n_4
=
48-3b-n_4.
\]
Since $c_H\geq4$ for all ten lines,
\[
48-3b-n_4\geq40.
\]
As $n_4\geq1$, it follows that $b\leq2$. Since $b>0$, one has
$b\in\{1,2\}$.
Moreover, the same inequality gives
\[
n_4\leq8-3b.
\]
Together with
\[
n_3+3n_4=16-b,\qquad
n_2=3(b-1+n_4),
\]
this determines the possibilities. If $b=1$, then
$1\leq n_4\leq5$, and hence
\[
(n_2,n_3,n_4)
=
(3,12,1),\ (6,9,2),\ (9,6,3),\ (12,3,4),\ (15,0,5).
\]
If $b=2$, then $1\leq n_4\leq2$, and hence
\[
(n_2,n_3,n_4)
=
(6,11,1),\ (9,8,2).
\]

Finally, Bourbaki degree one characterizes nearly free arrangements.
If $b=2$, then
Proposition~\ref{prop:type-bourbaki-multiplicity-sandwich} gives
\[
\binom{t(\A)+1}{2}\leq2,
\qquad
\binom{t(\B)+1}{2}\leq2.
\]
Since both arrangements are nonfree, their types are positive, and
therefore $t(\A)=t(\B)=1$. Hence both arrangements are plus-one
generated. Since their Bourbaki degree is not one, they are not
nearly free.
\end{proof}

\begin{Corollary}
\label{cor:ten-line-symmetric-deletion-dichotomy}
Under the hypotheses of Proposition~\ref{prop:ten-line-symmetric-reduction}, exactly one of the following occurs:
\begin{enumerate}
\item there exists a line $H$ with $c_H>4$; for every such line, the
corresponding deletions $\A\setminus\{H\}$ and $\B\setminus\{H'\}$ have initial degrees $4$ and $5$, respectively;

\item $c_H=4$ for every line $H$, and the weak combinatorics is
$(n_2,n_3,n_4)=(15,0,5)$ with $b=1$, or $(n_2,n_3,n_4)=(9,8,2)$ with $b=2$.
\end{enumerate}
\end{Corollary}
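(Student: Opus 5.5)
The plan is to combine the collision count from the proof of Proposition~\ref{prop:ten-line-symmetric-reduction} with the jump restriction of Proposition~\ref{prop:bourbaki-addition-collision}. The collision number is combinatorial: the lattice isomorphism $L(\A)\simeq L(\B)$ sends a line $H$ to a line $H'$ with the same number of distinct intersection points of the deleted arrangement on it. Hence $c_H=c_{H'}$. Proposition~\ref{prop:ten-line-symmetric-reduction} gives $c_H\geq4$ for every $H\in\A$. So exactly one of the following holds: some line has $c_H>4$, or $c_H=4$ for all ten lines. The two alternatives are therefore mutually exclusive by construction, and it remains to describe each.

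Suppose first that $c_H=4$ for every $H$. We use the identity from the proof of Proposition~\ref{prop:ten-line-symmetric-reduction}:
\[
\sum_{H\in\A}c_H=3n_3+8n_4=48-3b-n_4 .
\]
Under our assumption the left side equals $40$, so $n_4=8-3b$. We then substitute into $n_3+3n_4=16-b$ and $n_2=3(b-1+n_4)$. For $b=1$ this gives $n_4=5$, $n_3=0$, $n_2=15$. For $b=2$ it gives $n_4=2$, $n_3=8$, $n_2=9$. These are exactly the two weak combinatorics listed in (2).

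Suppose next that $c_H>4$ for some $H$, and let $H'$ be the corresponding line of $\B$. Proposition~\ref{prop:ten-line-symmetric-reduction} already gives $e_{\A\setminus\{H\}}=4$. For $\B'=\B\setminus\{H'\}$, the addition--deletion bound $e_{\B'}\leq e_\B\leq e_{\B'}+1$ gives $e_{\B'}\in\{4,5\}$. Assume for contradiction that $e_{\B'}=4$. Then passing from $\B'$ to $\B$ is a jumping addition, $e_\B=e_{\B'}+1$. Proposition~\ref{prop:bourbaki-addition-collision} then forces $c_{H'}\leq e_{\B'}=4$. This contradicts $c_{H'}=c_H>4$, so $e_{\B'}=5$, which proves (1).

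The only point needing care is the combinatorial invariance $c_H=c_{H'}$. It holds because $c_H=9-|\A^H|$, and $|\A^H|$ is the number of rank-two flats lying over $H$, which the lattice isomorphism preserves. Otherwise the argument is bookkeeping on top of the previous proposition.
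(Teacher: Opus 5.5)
Your proposal is correct and follows the paper's proof essentially step for step. You use the same contradiction via Proposition~\ref{prop:bourbaki-addition-collision}: if $e_{\B\setminus\{H'\}}=4$, adding $H'$ would be a jump with $c_{H'}=c_H>4$, which is impossible. You also use the same identity $3n_3+8n_4=48-3b-n_4=40$ to force $n_4=8-3b$. Your explicit remarks on mutual exclusivity and on the lattice invariance of $c_H$ only spell out what the paper leaves implicit. The same holds for the essentiality of the deletions, which follows from $m_1=4$.
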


\begin{proof}
Assume that $c_H>4$ for some $H\in\A$. We already know that $e_{\A\setminus\{H\}}=4$. Let $H'$ be the corresponding line of $\B$. Since the lattice isomorphism preserves the incidences on $H$ and $H'$, their collision numbers are equal. If $e_{\B\setminus\{H'\}}=4$, then adding $H'$ produces a jump from $4$ to $5$, and Proposition~\ref{prop:bourbaki-addition-collision} gives $c_H\leq4$, a contradiction. Hence $e_{\B\setminus\{H'\}}=5$, proving (1). Otherwise $c_H=4$ for every $H$, so $3n_3+8n_4=40.$ Together with $n_3+3n_4=16-b$, this gives $n_4=8-3b$. For $b=1$ one obtains $(n_2,n_3,n_4)=(15,0,5)$, while for $b=2$ one obtains $(n_2,n_3,n_4)=(9,8,2)$.
\end{proof}

\begin{Question}
\label{prob:ten-line-symmetric-bourbaki}
Does there exist a pair of essential ten-line arrangements over $\CC$ with isomorphic intersection lattices and initial degrees $4$ and $5$?

By Proposition~\ref{prop:ten-line-symmetric-reduction} and Corollary~\ref{cor:ten-line-symmetric-deletion-dichotomy}, any such pair either arises from a nine-line pair with initial degrees $4$ and $5$ by a suitable addition, or has weak combinatorics
\[
(15,0,5) \qquad\text{or}\qquad (9,8,2).
\]
\end{Question}
Thus noncombinatoriality of the Bourbaki degree already occurs for nine lines. By contrast, the exceptional phenomenon in which the initial degree changes while the Bourbaki degree remains constant can occur, if at all, only from ten lines onward. In the ten-line case, Proposition~\ref{prop:ten-line-symmetric-reduction} and Corollary~\ref{cor:ten-line-symmetric-deletion-dichotomy} reduce the problem to a small number of explicit combinatorial possibilities.


\end{document}